\newtheorem{Theorem}{\bf Theorem}
\newtheorem{lemma}[Theorem]{\bf Lemma}
\newtheorem{proposition}[Theorem]{\bf Proposition}
\newtheorem{corollary}[Theorem]{\bf Corollary}
\newtheorem{definition}[Theorem]{\bf Definition}
\newtheorem{remark}[Theorem]{\bf Remark}
\newtheorem{theorem}[Theorem]{\bf Theorem}
\newtheorem*{theorem*}{\bf Theorem}
\newtheorem*{example*}{\bf Example} 
\newcommand{\spmat}[1]{
  \left[
  \let~=&
  \begin{smallmatrix}#1\end{smallmatrix}
  \right]
}}
\newcommand{\m}{\mathfrak{m}} 
\newcommand{\N}{\mathbb{N}}
\newcommand{\n}{n}
\newcommand{\LL}{\mathcal{L}}
\newcommand{\RR}{\mathcal{R}}
\newcommand{\s}{\mathcal{S}}
\newcommand{\ov}{\overline}
\newcommand{\lra}{\longrightarrow}
     \title[Joint reductions and mixed Buchsbaum-Rim multiplicities of modules]{Joint reductions and mixed Buchsbaum-Rim multiplicities of modules and a joint-reduction-number-zero theorem}
     \author{Daniel Katz}
     \address{Department of Mathematics, University of Kansas, Lawrence, Kansas 66045, USA}
     \email{dlk53@ku.edu}
      \author{Vijay Kodiyalam}
     \address{The Institute of Mathematical Sciences, Chennai, India and Homi Bhabha National Institute, Mumbai, India}
     \email{vijay@imsc.res.in}
     \author{J. K. Verma}
     \address{Indian Institute of Technology Gandhinagar, Palaj, Gandhinagar 382355, Gujarat, India}
     \email{jugal.verma@iitgn.ac.in}
\date{February 28, 2025}
\dedicatory{Dedicated to the memory of J\"{u}rgen Herzog whose pioneering work in commutative algebra was an inspiration to generations of algebraists.}
\keywords{joint reduction, Buchsbaum-Rim multiplicity, integrally closed module, two-dimensional regular local ring}
\subjclass[2020]{Primary 13B22,13C13}
\begin{document}
     \begin{abstract}
     We offer new definitions of joint reductions and mixed Buchsbaum-Rim multiplicity for certain collections of modules over a Noetherian local ring  and illustrate their application to give two different proofs of a joint-reduction-number-zero theorem for integrally closed modules over two-dimensional regular local rings. We also relate the mixed Buchsbaum-Rim multiplicity of modules to the Euler-Poincar\'{e} characteristic of a natural Koszul complex and relate it to the mixed Buchsbaum-Rim multiplicity of ideals by generalising a lemma from intersection theory.
       \end{abstract}
     \maketitle

 In this paper we  study the notions of joint reductions and  mixed Buchsbaum-Rim multiplicity for certain collections of modules over Noetherian local rings and apply these ideas to a family of integrally closed modules in a two-dimensional regular local ring. We were initially motivated by a question posed to the third author by S.~Kleiman. The question asks whether the following theorem of Rees (see \cite{Res1981}) might extend to a pair of finitely generated modules of finite colength contained in a corresponding pair of free modules. In the theorem below, $\ov{e}_2(I)$ refers to the constant term of the degree two polynomial giving the lengths of $R/\overline{I^rJ^s}$, and $\overline{I^rJ^s}$ denotes the integral closure of the ideal $I^rJ^s$.
 
 \begin{theorem}{\bf (Rees)} \label{Rees} Let $I$ and $J$ be $\m$-primary ideals of a two-dimensional analytically unramified Cohen-Macaulay local ring with infinite residue field. Then the following are equivalent:\\
(1) $\ov{e}_2(IJ)=\ov{e}_2(I)+\ov{e}_2(J).$\\
(2) There exist $a\in I$ and $b\in J$ such that for all  $r, s\geq 1,$ 
$$\ov{I^rJ^s}=a\ov{I^{r-1}J^s}+b\ov{I^rJ^{s-1}}.$$
Moreover each of these conditions implies that $\ov{IJ}=\ov{I}\cdot \ov{J}.$
\end{theorem}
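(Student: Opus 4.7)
The plan is to translate the theorem into a statement about a bivariate Hilbert polynomial and a single non-negative defect invariant. Because $R$ is $2$-dimensional, analytically unramified and Cohen--Macaulay, Rees's theorem on integral closures yields that $\ell(R/\overline{I^rJ^s})$ is polynomial in $(r,s)$ of total degree $2$ for $r,s \gg 0$; since the residue field is infinite we may additionally choose generic (superficial) elements $a\in I$, $b\in J$ that form both a joint reduction of $(I,J)$ and a regular sequence on $R$, so that the asymptotic identity $\overline{I^rJ^s} = a\overline{I^{r-1}J^s}+b\overline{I^rJ^{s-1}}$ already holds for $r,s\gg 0$.

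The central step is to prove the identity
$$\Delta := \ov{e}_2(IJ) - \ov{e}_2(I) - \ov{e}_2(J) \;=\; \sum_{r,s\geq 1} d(r,s), \qquad d(r,s) := \ell\bigl(\overline{I^rJ^s}/(a\overline{I^{r-1}J^s}+b\overline{I^rJ^{s-1}})\bigr),$$
with each $d(r,s) \geq 0$ and the sum finite by superficiality. I plan to derive this from a double telescoping: for each $(r,s)$, the Koszul complex of $(a,b)$ acting on $\overline{I^{r-1}J^{s-1}}$ together with the regular-sequence property of $(a,b)$ produces a four-term exact sequence whose alternating lengths contribute a controlled amount to the bivariate Hilbert polynomial, and summation over $(r,s)$ isolates $\Delta$ on the left and $\sum d(r,s)$ on the right. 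The \emph{main obstacle} will be establishing this identity term-by-term rather than merely asymptotically; this requires carefully controlling the colon submodules $(\overline{I^rJ^s}:a)\cap \overline{I^{r-1}J^s}$ via superficiality, and keeping track of the low-$(r,s)$ boundary contributions where the Hilbert function and Hilbert polynomial may disagree.

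Once the identity is in hand, the equivalence is immediate. For (2)$\Rightarrow$(1): condition (2) provides \emph{some} pair $(a,b)$ with $d(r,s)=0$ for every $r,s\geq 1$, so $\Delta=0$, which is (1); note that the value of $\Delta$ does not depend on the choice of $(a,b)$, so the argument succeeds even when the pair in (2) is not the generic one we selected. For (1)$\Rightarrow$(2): applied to the generic $(a,b)$ above, the non-negativity of the summands in the identity together with $\Delta=0$ forces $d(r,s)=0$ for all $(r,s)$, which is the joint-reduction-number-zero identity of (2) for that $(a,b)$.

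For the ``moreover'' clause, set $r=s=1$ in (2) to obtain $\overline{IJ}=a\ov{J}+b\ov{I}$. Since $a\in I\subseteq\ov{I}$ and $\ov{J}$ is an ideal, $a\ov{J}\subseteq\ov{I}\cdot\ov{J}$; symmetrically $b\ov{I}\subseteq\ov{I}\cdot\ov{J}$. Thus $\overline{IJ}\subseteq\ov{I}\cdot\ov{J}$, and the reverse inclusion $\ov{I}\cdot\ov{J}\subseteq\overline{IJ}$ is automatic from the definition of integral closure.
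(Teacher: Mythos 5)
This theorem is stated in the paper as background and attributed to Rees \cite{Res1981}; the paper gives no proof of it, so there is no ``paper's own proof'' to check your argument against. (The paper does prove related results --- the joint-reduction-number-zero theorem for integrally closed \emph{modules} over a two-dimensional \emph{regular} local ring, in \S 2 and again in \S 5 --- but those concern a different setting and do not re-derive the quoted theorem of Rees.)

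As for your proposal: the shape is right, but the load-bearing step is asserted rather than proved. The identity
\[
\ov{e}_2(IJ) - \ov{e}_2(I) - \ov{e}_2(J) \;=\; \sum_{r,s\geq 1}\ell\!\left(\frac{\overline{I^rJ^s}}{a\overline{I^{r-1}J^s}+b\overline{I^rJ^{s-1}}}\right),
\]
with each summand nonnegative, \emph{is} the theorem (both directions of the equivalence follow from it immediately, as you note), and your description of how to obtain it does not produce it. The natural object to study is the complex
\[
0 \to R/\overline{I^{r-1}J^{s-1}} \xrightarrow{\;(-b,\,a)^{T}\;} R/\overline{I^{r-1}J^{s}} \oplus R/\overline{I^{r}J^{s-1}} \xrightarrow{\;(a,\,b)\;} R/\overline{I^{r}J^{s}},
\]
whose Euler characteristic is the mixed second difference of $H(r,s)=\ell(R/\overline{I^rJ^s})$, and whose right-most cokernel is your $d(r,s)$ \emph{only after} quotienting by the ambient $(a,b)$-torsion; the middle homology and left kernel are colon-ideal correction terms of the form $\bigl((\overline{I^{r-1}J^{s}}:b)\cap(\overline{I^{r}J^{s-1}}:a)\bigr)/\overline{I^{r-1}J^{s-1}}$, and showing these vanish (equivalently, that the relevant bigraded Rees-like ring has the right depth) is exactly where the two-dimensional, analytically unramified, Cohen--Macaulay hypotheses enter. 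You flag this as the ``main obstacle,'' which is honest, but as written the proposal simply postulates the formula.

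Two further concrete points. First, $\ov{e}_2(I)$ is by definition the constant term of the one-variable normalized Hilbert polynomial of $\ell(R/\overline{I^n})$, while your identity is about the two-variable polynomial $P(r,s)$; you implicitly use that $P(r,0)$ restricts to the correct one-variable polynomial, and that is a genuine lemma about the bifiltration, not automatic. Second, in (2) $\Rightarrow$ (1) you need the identity to hold for the specific $(a,b)$ supplied by (2), not for a separately chosen generic pair. The observation that $\Delta$ is intrinsic does not by itself finish this: you must also verify that the $(a,b)$ from (2) satisfies the hypotheses under which you derived $\Delta = \sum d(r,s)$. It does --- taking $r,s\gg 0$ in (2) shows $(a,b)$ is a parameter ideal, hence a regular sequence because $R$ is Cohen--Macaulay --- but this should be said explicitly, because without it the logic is circular. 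The ``moreover'' clause is correct as argued.
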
 
 
Note that in a two-dimensional pseudo-rational or regular local ring, where products of integrally closed ideals are integrally closed, for two integrally closed ideals $I$ and $J$, the terms in (1) are zero and (2) states that the joint reduction number of the pair of ideals is zero (see \cite{Vrm1990}).  By the work of the second author (\cite{Kdy1995}), if $R$ is a two-dimensional regular local ring, a product of integrally closed modules is integrally closed, and by the work of the first and second author (\cite{KtzKdy1997}), the polynomial giving the lengths of $S_n(F)/S_n(M)$ just has two terms (when expressed as a combination of binomial coefficients), where $M$ is an integrally closed module of finite colength in the free module $F$, $S_n(F)$ is the $n$th symmetric power of $F$ and $S_n(M)$ is the image in $S_n(F)$ of the $n$th symmetric power of $M$. These results suggest that conditions (1) and (2) hold for integrally closed modules when $R$ is a two-dimensional regular local ring, so that Kleiman's question would have a (strong) positive answer in this case. We are able to achieve this goal. In particular, in sections two and five we offer two different proofs of a joint-reduction-number-zero condition for integrally closed modules over a two-dimensional regular local ring. In section six, we give an explicit form for the joint Buchsbaum-Rim polynomial for several integrally closed modules that specialises to give a formula for the length of $S_{n_1}(F_1)S_{n_2}(F_2)/S_{n_1}(M_1)S_{n_2}(M_2)$, for all $n_1, n_2\geq 0$. 
 
 Our work on the problem of considering two (or more) integrally closed modules required us to revisit the concepts of joint reductions for modules and the resulting mixed Buchsbaum-Rim multiplicity. 
 Earlier definitions of joint reductions exist in the work of several authors, including \cite{KrbRes1994} and \cite{CllPrz2010}, but in this paper we offer a different definition of a joint reduction for a family of modules which we feel generalises the ideal case in a natural way.  Similarly, mixed Buchsbaum-Rim multiplicities of modules appear in \cite{KrbRes1994}, \cite{CllPrz2010}, and \cite{KlmThr1996} (see also \cite{Cdr2024}), but our take on these multiplicities is not the same as the one offered in \cite{CllPrz2010}; is less general, but more precise (for our applications) than the approach in \cite{KrbRes1994}; and is purely algebraic, and so differs from the view presented in \cite{KlmThr1996}.\footnote{For example, \cite{CllPrz2010} considers a finite family of modules contained in a single free module, and takes one element from each module to construct a joint reduction. The joint Buchsbam-Rim polynomial giving the mixed Buchsbaum-Rim multiplicities in \cite{CllPrz2010} has degree $d+r-1$, just as in the case of the Buchsbaum-Rim polynomial for one module, where $d$ is the dimension of the ring and $r$ is the rank of the module. The mixed Buchsbaum-Rim multiplicity we work with comes from a polynomial of degree $r_1+\cdots + r_d$, where we have taken $d$ modules of finite colength in $d$ free-modules of ranks $r_1, \ldots, r_d$. Kirby and Rees work in the general setting of multigraded algebras, and the mixed Bucshbaus-Rim multiplicities in \cite{KlmThr1996} are intersection numbers.}
 
To indicate our approach to joint reductions and the mixed Buchsbaum-Rim multiplicity, first consider the (most natural) case of $d$ $\m$-primary ideals $I_1, \ldots, I_d$ in a local ring $(R, \m, k)$ with dimension $d$ and infinite residue field $k$, introduced by Rees in \cite{Res1984}. 
Elements $x_1\in I_1, \ldots, x_d\in I_d$ form a joint reduction of $I_1, \ldots, I_d$ if for some $n \in {\mathbb N}_0 = {\mathbb N} \cup \{0\}$, 
\begin{align*}
I_1^{n+1}I_2^{n+1}\cdots I_d^{n+1} = x_1I_1^{n}I_2^{n+1}\cdots I_d^{n+1} + x_2I_1^{n+1}I_2^n \cdots I_d^{n+1} \\
+ \cdots + x_dI_1^{n+1}\cdots I_{d-1}^{n+1}I_d^n.
\end{align*}
 {\it The} (emphasis intended) mixed multiplicity $e(I_1|\cdots |I_d)$ of the ideals $I_1, \ldots, I_d$ is the normalised coefficient of the $n_1\cdots n_d$ term appearing in the multivariable Hilbert-Samuel polynomial giving the length of $R/I_1^{n_1}\cdots I_d ^{n_d}$, for all $n_i$ sufficiently large. Rees shows that $e(I_1|\cdots |I_d)$ is the multiplicity of the ideal $x_1, \ldots, x_d$, which in turn is the Euler-Poincar\'{e} characteristic of the Koszul complex on $x_1, \ldots, x_d$. Now suppose that $F_1, \ldots, F_d $ are free $R$-modules of ranks $r_1, \ldots, r_d$ and $M_1\subseteq F_1, \ldots, M_d\subseteq F_d$ are modules of finite colength. Our view is that an $r_i$-generated submodule $B_i\subseteq M_i$ is to $M_i$ as a single element $x_i\in I_i$ is to $I_i$. 
Then, the definition of $B_1,\cdots,B_d$ being a joint reduction of $M_1,\cdots,M_d$ is a natural equational generalisation of the definition for ideals.
In this setting, the joint Buchsbaum-Rim polynomial $p(n_1, \ldots, n_d)$ associated to $M_1, \ldots, M_d$ has total degree $r_1+\cdots + r_d$ (see  \S7) and the mixed Buchsbaum-Rim multiplicity $br(M_1|\cdots | M_d)$ is the normalised coefficient of the $n_1^{r_1}\cdots n_d^{r_d}$ term. To calculate $br (M_1|\cdots |M_d)$, we use the Koszul-like complex $K(\phi_1)\otimes \cdots \otimes K(\phi_d)$, where $K(\phi_i)$ is the complex $0\to F_i\overset{\phi_i}\to F_i\to 0$ and $\phi_i$ is the map whose matrix with respect to the standard basis of $F_i$ is $B_i$. Using the Euler-Poincar\'{e} characteristic of this complex, it follows from sections three and four that
\[
e(det(\phi_1), \ldots, det(\phi_d)) = br (M_1|\cdots | M_d) = e(I_1|\cdots | I_d),
\] where $I_i$ is the ideal of $r_i\times r_i$ minors of the matrix whose columns are the generators of $M_i$. 
 
Maintaining the notation above, here is a brief description of the contents of this paper.  Throughout $(R, \m, k)$ is a local ring of dimension $d > 0$ with infinite residue field $k$. Given a bounded  complex $\mathcal{C}$ of $R$-modules with finite length homology modules, we will write $\chi(\mathcal{C})$ for the Euler-Poincar\'{e} characteristic of $\mathcal{C}$, i.e., the alternating sum of the lengths of the homology modules of $\mathcal{C}$. Note that if the modules in $\mathcal{C}$ have finite length, then $\chi(\mathcal{C})$ also equals the alternating sum of the lengths of the modules in $\mathcal{C}$. 
 In \S1 we define joint reductions of the family of modules $M_1, \ldots, M_q$ over Noetherian local rings of positive dimension. We show that with our definition, joint reductions exist and also prove some basic properties of joint reductions. In particular, we show the equivalence of the equational, valuative and determinantal definitions and note that if the ring $R$ has positive depth, then the joint reduction $B_1, \ldots, B_q$ of $M_1, \ldots, M_q$ can be chosen so that each $B_i$ is a free $R$-module. We apply this in \S2 to give a proof of the joint-reduction-number-zero theorem for integrally closed modules over a two-dimensional regular local ring.
 In \S3 we return to the setting of a general Noetherian local ring of positive dimension $d$ and define the mixed Buchsbaum-Rim multiplicity of a collection of $d$ modules of finite colength in free modules. We also relate this to the Euler-Poincar\'{e} characteristic of a naturally defined Koszul complex. In \S4 we prove a generalisation of a lemma from intersection theory and use it to relate the mixed Buchsbaum-Rim multiplicity of modules to that of the ideals of maximal minors of these modules. In \S5 we get back to the context of two-dimensional regular local rings and give a different proof of the  joint-reduction-number-zero theorem for integrally closed modules using the Hoskin-Deligne length formula for modules and the results of the previous two sections. The penultimate \S6 applies the joint-reduction-number-zero result to explicitly calculate the joint Buchsbaum-Rim function for a finite collection of integrally closed modules over a two-dimensional regular local ring. The final \S7 is an appendix where we give a self-contained proof of the existence of the joint Buchsbaum-Rim polynomial for a collection of modules of finite colength in free modules over general local rings.

 \section{Joint reductions of modules}
 
Let $(R,{\mathfrak m},k)$ be a Noetherian  local ring of  positive  dimension $d$ with infinite residue field. 
Assume that $M_1 \subseteq F_1,M_2 \subseteq F_2, \cdots,$ $ M_q \subseteq F_q$ are $R$-submodules  of  finite colength in free   $R$-modules $F_1,\cdots,F_q$ of ranks $r_1,\cdots,r_q$.  In this section, we will provide a new definition for a joint reduction of the collection $M_1, \ldots, M_q$ and show that joint reductions exist if $q\geq d$. We also derive some properties of joint reductions, including: alternate characterisations of joint reductions; minimal generating sets for elements in the joint reduction extend to minimal generating sets of $M_1, \ldots, M_q$; elements in the joint reduction can be chosen to be free submodules of $M_1, \ldots, M_q$, if $R$ has positive depth. These are properties that hold in the special case of each $M_k$ being an ideal of $R$. 
In the definition below, we will allow for the possibility that some (or all) $M_k=F_k$.  
Set $F = F_1 \oplus F_2 \oplus \cdots \oplus F_q$ and $M = M_1 \oplus M_2 \oplus \cdots \oplus M_q$. Then $S(M) \subseteq S(F)$ is an inclusion of ${\mathbb N}^q$-graded $R$-algebras, where $S(M)$ denotes the image of the symmetric algebra of $M$ in the symmetric algebra of $F$. Suppose that $B_1 \subseteq M_1,B_2 \subseteq M_2,\cdots,B_q \subseteq M_q$ are $R$-submodules.
     
\begin{definition}\label{def:jtred}
The collection $(B_1,\cdots,B_q)$ is said to be a joint reduction of the collection $(M_1,\cdots,M_q)$ if
\begin{itemize}
\item Each $B_k$ is $r_k$-generated, and 
\item For some $n \in {\mathbb N}_0 = {\mathbb N} \cup \{0\}$, the joint reduction equation
\begin{align*}S_{n+1}(M_1)S_{n+1}(M_2)\cdots S_{n+1}(M_q) &= B_1S_{n}(M_1)S_{n+1}(M_2)\cdots S_{n+1}(M_q)\\
 &+ S_{n+1}(M_1)B_2S_{n}(M_2)\cdots S_{n+1}(M_q)\\
 &+\cdots\\
 &+S_{n+1}(M_1)S_{n+1}(M_2)\cdots B_qS_{n}(M_q),
\end{align*}
holds inside $S(M)$, or equivalently, if the ideal generated by all of $B_1,\cdots,B_q$ in $S(M)$ is irrelevant in the sense that it contains a power of the ideal generated by
$S(M)_{(1,1,\cdots,1)}$.
\end{itemize}
If $(B_1,\cdots,B_q)$ is a joint reduction of  $(M_1,\cdots,M_q)$, the smallest $n$ for which the joint reduction equation holds is said to be the joint reduction number of 
 $(M_1,\cdots,M_q)$ with respect to $(B_1,\cdots,B_q)$.
\end{definition}   

We first show that if $q \geq d$, then  joint reductions in the sense of Definition~\ref{def:jtred} always exist as a consequence of the results of Kirby and Rees in \cite{KrbRes1994}. Note that if $M_k=F_k$ then we may take $B_k=F_k$ and arbitrary $r_j$-generated submodules $B_j \subseteq M_j$, for $j \neq k$, to get a joint reduction, so the proof of existence is possibly non-trivial only in the case all the $M_k$ are proper submodules of $F_k$.
     
\begin{proposition}\label{prop:exist}
With notation as above, if $q \geq d$, there exists a joint reduction   $(B_1,\cdots,B_q)$  of  $(M_1,\cdots,M_q)$. More precisely, identfying $M_k$ with the $r_k \times m_k$ matrix of its generators,
there is a non-empty Zariski
open subset $U$ of \[M_{m_1 \times r_1}(k) \times \cdots \times M_{m_q \times r_q}(k)\] such that if $(\overline{A}_1,\cdots,\overline{A}_q) \in U$ (with $A_k \in M_{m_k \times r_k}(R)$ and $\overline{A}_k \in M_{m_k \times r_k}(k)$ denoting its reduction modulo ${\mathfrak m}$) and $B_k = M_kA_k$, then
$(B_1,\cdots,B_q)$  is a joint reduction of  $(M_1,\cdots,M_q)$.

\end{proposition}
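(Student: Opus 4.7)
The plan is to reformulate the joint reduction condition as an irrelevance property of a homogeneous ideal inside the standard $\mathbb{N}^q$-graded $R$-algebra $\mathcal{R}:=S(M)\subseteq S(F)$, pass to the special fibre over the residue field, and then invoke the joint-reduction existence theorems of Kirby and Rees \cite{KrbRes1994}. Here $\mathcal{R}$ is generated over $\mathcal{R}_{\mathbf{0}}=R$ by the degree-$e_k$ components $\mathcal{R}_{e_k}=M_k$ (with $e_k$ the $k$-th standard basis vector of $\mathbb{N}^q$), and each homogeneous piece $\mathcal{R}_{\mathbf{n}}=S_{n_1}(M_1)\cdots S_{n_q}(M_q)$ is a finitely generated $R$-module. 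By Definition~\ref{def:jtred}, $(B_1,\ldots,B_q)$ is a joint reduction of $(M_1,\ldots,M_q)$ exactly when the homogeneous ideal $J:=B_1\mathcal{R}+\cdots+B_q\mathcal{R}$ contains $\mathcal{R}_{(n,\ldots,n)}$ for some $n$, equivalently contains a power of the ideal generated by $\mathcal{R}_{(1,\ldots,1)}$.

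Next I would reduce modulo $\mathfrak{m}$, setting $\overline{\mathcal{R}}:=\mathcal{R}/\mathfrak{m}\mathcal{R}$, a standard $\mathbb{N}^q$-graded algebra over the infinite field $k$. Since each $\mathcal{R}_{\mathbf{n}}$ is finitely generated over $R$, graded Nakayama's lemma shows that $\mathcal{R}_{(n,\ldots,n)}\subseteq J$ if and only if $\overline{\mathcal{R}}_{(n,\ldots,n)}\subseteq \overline{J}$, where $\overline{J}$ is the image of $J$. Writing $B_k=M_k A_k$ as in the statement, the image of $B_k$ in $\overline{\mathcal{R}}_{e_k}=M_k/\mathfrak{m}M_k$ depends only on $\overline{A}_k$, so whether $(B_1,\ldots,B_q)$ is a joint reduction is a condition purely on the tuple $(\overline{A}_1,\ldots,\overline{A}_q)\in\prod_k M_{m_k\times r_k}(k)$.

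Under the hypothesis $q\geq d$, the joint-reduction results of Kirby-Rees \cite{KrbRes1994}, applied to the multigraded $k$-algebra $\overline{\mathcal{R}}$, then guarantee that for each $k$ one can select $r_k$ elements of $\overline{\mathcal{R}}_{e_k}$ whose union generates an ideal of $\overline{\mathcal{R}}$ containing a power of the ideal generated by $\overline{\mathcal{R}}_{(1,\ldots,1)}$. Their argument selects each successive element generically, avoiding the finite list of relevant associated primes of $\overline{\mathcal{R}}$ not already killed by the previously chosen elements, and so exhibits the set $U$ of successful tuples $(\overline{A}_1,\ldots,\overline{A}_q)$ as the complement of a proper Zariski closed subset of $\prod_k M_{m_k\times r_k}(k)$. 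Any $R$-lift $A_k$ of such $\overline{A}_k$ then yields a joint reduction $(M_1 A_1,\ldots,M_q A_q)$ of $(M_1,\ldots,M_q)$ by the fibre equivalence established in the previous paragraph.

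The principal technical obstacle is matching the Kirby-Rees existence statement — typically phrased in terms of a single element chosen from each grading direction — to the version needed here, in which $r_k$ elements are required from degree $e_k$. I would address this by refining the $\mathbb{N}^q$-grading on $\overline{\mathcal{R}}$ to an $\mathbb{N}^{r_1+\cdots+r_q}$-grading in which each sought generator sits in its own standard direction (writing the $r_k$ chosen generators of $M_k$ as a polynomial-like presentation), applying the one-element-per-direction form of Kirby-Rees in the refined grading together with the dimension bound coming from $q\geq d$, and then coarsening back to record the joint reduction and its genericity in the original $\mathbb{N}^q$-grading.
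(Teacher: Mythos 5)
The overall strategy is right --- reinterpret the joint reduction condition as irrelevance of a homogeneous ideal of $S(M)$ and invoke Kirby--Rees --- and this is indeed the paper's approach, but there are two substantive problems with your execution.

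The first is a genuine gap: you never verify the dimension hypothesis that Kirby--Rees requires. Their Theorem 1.6 produces a joint reduction of type $a$ precisely when $|a|$ is at least the analytic spread of the multigraded algebra. You gesture at ``the dimension bound coming from $q\geq d$'' but supply no computation. In fact the needed inequality $a(S(M))\leq r_1+\cdots+r_q$ is equivalent, after the paper's Proposition~\ref{prop:ansp}, to the identity $a(S(M))=r_1+\cdots+r_q+d-q$, whose proof is not at all immediate: it rests on Lemma~\ref{lem:dimsm} (which computes $a(S(M))$ for a single module via Eagon's height bound and a free-localisation argument) and a Hilbert-polynomial/diagonal-restriction argument for the multigraded algebra. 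This is the mathematical heart of the existence statement, and your proposal leaves it unaddressed.

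The second is a misconception that leads you to solve a non-problem. You describe the Kirby--Rees theorem as ``typically phrased in terms of a single element chosen from each grading direction'' and propose to compensate by refining the $\mathbb{N}^q$-grading to an $\mathbb{N}^{r_1+\cdots+r_q}$-grading. But Kirby--Rees Theorem 1.6 is already stated for joint reductions of arbitrary type $a=(a_1,\ldots,a_q)$, i.e.\ $a_k$ elements chosen from each $G_{\delta_k}$; taking $a=(r_1,\ldots,r_q)$ is exactly what the paper does, and the genericity (Zariski-openness of the successful $(\overline A_1,\ldots,\overline A_q)$) is contained in their proof. So the obstacle you flag does not exist. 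Worse, the proposed fix is dubious as stated: $S(M)/\mathfrak m S(M)$ carries no canonical $\mathbb{N}^{r_1+\cdots+r_q}$-grading refining the $\mathbb{N}^q$-grading --- any such refinement would depend on a choice of generators and would not in general be a ring grading at all. Dropping the refinement gambit and instead supplying the analytic-spread computation is what closes the argument.
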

     
Before proceeding with the proof, we briefly recall the contents of Section~1 of \cite{KrbRes1994} in their own notation. Their main object of interest is an ${\mathbb N}^q$-graded algebra $G$ with $G_{(0,0,\cdots,0)} = Q$ - a Noetherian local ring with maximal ideal ${\mathfrak m}$ and infinite residue field $k$. The algebra $G$ is also assumed to be generated by all the $G_{\delta_i}$ - where $\delta_i \in {\mathbb N}^q$ is the vector with a single non-zero entry 1 in the $i^{th}$ position - which are assumed to be finitely generated $Q$-modules. A relevant ideal of $G$ is one that does not contain a power of $G_{(1,1,\cdots,1)}$.

The dimension of $G$ (which is different from the Krull dimension of $G$) is defined to be $-1$ if the zero ideal of $G$ is irrelevant (i.e., $G_{(1,1,\cdots,1)}$ generates a nilpotent ideal) and to be the maximum of the heights of relevant ${\mathbb N}^q$-graded  prime ideals of $G$ otherwise. The analytic spread of $G$, which we denote by $a(G)$, is defined to be one more than the dimension of $G/{\mathfrak m}G$. Both of these can be computed by ``restricting to the diagonal" and thereby dealing with just ${\mathbb N}$-graded rings. Specifically, if $\Delta G$ denotes the ${\mathbb N}$-graded subring of $G$ generated by $G_{(1,1,\cdots,1)}$, then, Lemma 1.7 of \cite{KrbRes1994} shows that the dimension and analytic spread of $G$ coincide with those of $\Delta G$. 
 
For $a = (a_1,\cdots,a_q) \in {\mathbb N}^q$, a joint reduction of $G$ of type $a$ is a collection of elements \[\{z(k,j) \in G_{\delta_k}: k=1,2,\cdots,q ~\text{and}~ j = 1,2,\cdots,a_k\}\] generating an irrelevant ideal of $G$. The main result - Theorem 1.6 - of Section~1 of \cite{KrbRes1994} shows that if $|a| =  a_1+\cdots+a_q$ is at least the analytic spread of $G$, then there exists a joint reduction of $G$ of type $a$.
In this paper, we will need to use more than just the statement of this theorem. Suppose that for $k = 1,2,\cdots,q$, the $Q$-module $G_{\delta_k}$ is generated  by $y(k,l)$ for $l=1,\cdots,m_k$. Consider matrices $A_1,\cdots,A_q$ over $Q$ of sizes $m_1 \times a_1,\cdots,m_q \times a_q$ respectively. Define $z(k,j) \in G_{\delta_k}$ by the matrix equalities
$$
\left[
\begin{array}{ccc}
z(k,1) & \cdots & z(k,a_k)
\end{array}
\right]
=
\left[
\begin{array}{ccc}
y(k,1) & \cdots & y(k,m_k)
\end{array}
\right]~A_k.
$$
The proof of Theorem 1.6  of \cite{KrbRes1994} shows that there is a non-empty Zariski open subset $U$ of $M_{m_1 \times a_1}(k) \times \cdots \times M_{m_q \times a_q}(k)$ such that if $(\overline{A}_1,\cdots,\overline{A}_q) \in U$, then, the collection \[\{z(k,j): k=1,2,\cdots,q ~\text{and}~ j = 1,2,\cdots,a_k\}\] forms a joint reduction of $G$ of type $a$, provided $|a|$ is at least the analytic spread of $G$. In other words, the collection of $a_k$ `general linear combinations' of the generators of $G_{\delta_k}$ over all $k=1,\cdots,q$ forms a joint reduction of $G$.

Note that for an ${\mathbb N}$-graded ring $G$ with $G_0 = k$ - a field - and satisfying $G = G_0[G_1]$ with $G_1$ being a finite-dimensional $k$-vector space,  the dimension of $G$  is the degree of the polynomial that gives $dim_k(G_n)$ for all large $n$, which is one less than the Krull dimension of $G$ (see\cite{BrnHrz1993}, Theorem 17.16).

\begin{proof}[Proof of Proposition \ref{prop:exist}] As observed earlier, we may assume that all $M_i$ are proper submodules of $F_i$. We will then apply the results of \cite{KrbRes1994} to $G=S(M)$ and $Q=R$  with $S(M)$ being an ${\mathbb N}^q$-graded $R$-algebra with $$S(M)_{(n_1,\cdots,n_q)} = S_{n_1}(M_1)S_{n_2}(M_2) \cdots S_{n_q}(M_q).$$ With $r = (r_1,\cdots,r_q) \in {\mathbb N}^q$, a joint reduction of $S(M)$ of type $r$ is guaranteed to exist provided $|r| = r_1 + \cdots + r_q$ is at least the analytic spread of $S(M)$, which is true by Proposition \ref{prop:ansp} when $q \geq d$.

Thus for each $k=1,2,\cdots,q$, there exist $r_k$ elements $z(k,j) \in S(M)_{\delta_k} = M_k$ for $j=1,2,\cdots,r_k$, such that the ideal generated by all the $z(k,j)$ in $S(M)$ is irrelevant. Let $B_k$ denote the submodule of $M_k$ generated by all the $z(k,j)$ for $j=1,2,\cdots,r_k$. Then $B_k$ is an $r_k$-generated submodule of $M_k$ and the collection  $(B_1,\cdots,B_q)$  is a joint reduction of $(M_1,\cdots,M_q)$, as needed.

The existence of the non-empty Zariski open subset $U$ with the stated properties follows from the remarks above about the proof of Theorem 1.6 of \cite{KrbRes1994}.
\end{proof}

We need the following lemma whose proof is probably known to experts, but we have not been able to find a suitable reference.  

\begin{lemma}\label{lem:dimsm} If $M \subseteq F$ is a submodule of finite non-zero colength in a free module $F$ with $rank(F) = r$, then $a(S(M)) = r+d-1$. 
\end{lemma}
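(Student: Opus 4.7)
The plan is to identify $a(S(M))$ with $\dim_{\mathrm{Krull}}(\mathcal{F}(M))$, where $\mathcal{F}(M) := S(M)/\mathfrak{m}S(M)$ is the fiber cone, as recorded in the discussion immediately preceding the lemma, and then show this Krull dimension equals $r+d-1$. Equivalently, the Hilbert function $n\mapsto \mu(S_n(M))$ agrees with a polynomial of degree $r+d-2$ for large $n$.

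I would first establish $\dim S(M)=r+d$ via a generic fiber argument. Since $F/M$ has finite length, $M$ has full rank $r$ in $F$, so $S(M)\otimes_R K\cong K[X_1,\ldots,X_r]$ where $K=\mathrm{Frac}(R)$; the standard dimension formula then yields $\dim S(M)=\dim R+r=d+r$. Next, the finite-colength hypothesis produces an $s\geq 1$ with $\mathfrak{m}^s F\subseteq M$, so $\mathfrak{m}^{sn}S_n(F)\subseteq S_n(M)$ for every $n$. A direct computation exploiting that $S_n(F)$ is free of rank $\binom{n+r-1}{r-1}$ shows that the ``reference'' fiber cone has Hilbert function $\mu(S_n(\mathfrak{m}^s F))=\binom{n+r-1}{r-1}\cdot \mu(\mathfrak{m}^{sn})$, which is polynomial in $n$ of degree $(r-1)+(d-1)=r+d-2$, giving $\dim \mathcal{F}(\mathfrak{m}^s F)=r+d-1$. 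This is the target dimension; the remaining work is to transfer it to $\mathcal{F}(M)$.

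I would then establish the two bounds on $\dim \mathcal{F}(M)$. For the upper bound $\dim\mathcal{F}(M)\leq r+d-1$, construct a reduction $B\subseteq M$ generated by $r+d-1$ elements: since the residue field $k$ is infinite, one can choose $r+d-1$ generic $R$-linear combinations of generators of $M$ and verify the joint reduction equation by a determinantal/generic element argument of the same flavor as the proof of Proposition~\ref{prop:exist}; then $S(M)$ is integral over $S(B)$, so $\mathcal{F}(M)$ is finite over $\mathcal{F}(B)$, and $\dim \mathcal{F}(B)\leq\mu(B)=r+d-1$. For the lower bound $\dim\mathcal{F}(M)\geq r+d-1$, invoke the Buchsbaum-Rim polynomial $\ell_R(S_n(F)/S_n(M))$ (which has degree $r+d-1$ by \S7): together with the free-module identity $\ell_R(S_n(F)/\mathfrak{m}^{k+1}S_n(F))=\binom{n+r-1}{r-1}\cdot\ell(R/\mathfrak{m}^{k+1})$, a bi-graded Hilbert-Samuel analysis of $\ell_R(S_n(M)/\mathfrak{m}^{k+1}S_n(M))$ in $(n,k)$ recovers $\mu(S_n(M))$ at $k=0$ and forces its growth to be at least of degree $r+d-2$.

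The hardest step will be the non-circular derivation of the upper bound: producing a reduction with exactly $r+d-1$ generators is morally equivalent to the statement of the lemma via Kirby-Rees Theorem~1.6, so a careful direct construction is needed (e.g., by taking a system of parameters of $R$ together with $r-1$ additional generic elements of $M$ and running a Nakayama-style argument on the fiber cone). The lower bound is largely bookkeeping once the Buchsbaum-Rim polynomial is in hand, but requires attention to the interaction between the $\mathfrak{m}$-adic filtration on the finitely generated $R$-module $S_n(M)$ and the grading on $S(M)$.
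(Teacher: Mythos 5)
Your identification $a(S(M))=\dim_{\mathrm{Krull}}(S(M)/\mathfrak{m}S(M))$ is the correct starting point, and the computation of $\dim S(M)=r+d$ and of the ``reference'' fiber cone $\mu(S_n(\mathfrak{m}^sF))$ are both fine, but neither half of your two-bound strategy actually closes, and the paper's argument for each half is entirely different.

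For the upper bound, you flag the circularity yourself but do not resolve it. Producing a reduction of $M$ generated by $r+d-1$ elements is not merely ``morally'' equivalent to $a(S(M))\leq r+d-1$: Rees's and Kirby--Rees's generic-element machinery produces reductions of size exactly $a(S(M))$ and gives no way to guarantee fewer. Your fallback (a system of parameters of $R$ together with $r-1$ further generic elements of $M$) does not typecheck as stated, since the elements of a system of parameters of $R$ are not elements of $M$, and you do not say how to promote them to ones. The paper sidesteps reductions entirely here: by Lemma 16.2.2(4) of Swanson--Huneke, $\dim_{\mathrm{Krull}}S(M)=r+d$, and by part (2) of the same lemma every minimal prime of $S(M)$ contracts to a minimal prime of $R$, so $\mathfrak{m}S(M)$ lies in no minimal prime of $S(M)$ (using $d>0$). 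Hence every prime containing $\mathfrak{m}S(M)$ properly contains some minimal prime, giving $\dim_{\mathrm{Krull}}(S(M)/\mathfrak{m}S(M))\leq r+d-1$ directly, with no reduction-theoretic input.

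For the lower bound, the claim that a ``bi-graded Hilbert--Samuel analysis'' will ``recover'' $\mu(S_n(M))$ and force its degree is not an argument: the degree of $\ell(S_n(F)/S_n(M))$ does not control $\mu(S_n(M))=\ell(S_n(M)/\mathfrak{m}S_n(M))$, because $\mathfrak{m}S_n(M)$ and $\mathfrak{m}S_n(F)\cap S_n(M)$ differ and the naive inequality one extracts (e.g.\ $\mu(S_n(M))\geq\binom{n+r-1}{r-1}-\ell(S_n(F)/S_n(M))$) is vacuous since the subtracted term has the larger degree $r+d-1$. The paper's lower bound uses a genuinely different idea: take a minimal reduction $N$ of $M$, which by Rees (1987) is generated by $a=a(S(M))$ elements; show $N$ has finite colength in $F$ by localizing at a non-maximal prime $P$ and using that $F_P$ has no proper reductions; then the $r\times a$ matrix $\tilde{N}$ of generators has $I_r(\tilde N)$ $\mathfrak{m}$-primary, hence of height $d$, and Eagon's height bound $\operatorname{ht}(I_r(\tilde N))\leq a-r+1$ forces $a\geq r+d-1$. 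That height bound on determinantal ideals is the key input your sketch is missing, and I do not see how to substitute the Buchsbaum--Rim polynomial for it.
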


\begin{proof}
First, by Lemma 16.2.2(4) of \cite{SwnHnk2006} and the positivity of $d$, the Krull dimension of $S(M)$ is $r+d$. Further by (2) of the same lemma, minimal primes of $S(M)$  contract to minimal primes of $R$ and so ${\mathfrak m}S(M)$ is not contained in a minimal prime of $S(M)$ (again since $d>0$). It follows that $a(S(M)) \leq r+d-1$.

 For the reverse inequality, let $a = a(S(M))$. In \cite{Res1987} it is shown that $M$ has a minimal reduction generated by $a$ elements. (Note: in \cite{Res1987} Rees refers to $a$ as the analytic spread of $M$.) Let $N$ be a minimal reduction of $M$. We claim that $N$ has finite co-length in $F$. Suppose the claim holds. Let $\tilde{N}$ be the matrix whose columns are the generators of $N$, so that $\tilde{N}$ is an $r\times a$ matrix. By the claim, $I_r(\tilde{N})$ is $\m$-primary. By Eagon's Theorem, $d = \textrm{height}(I_r(\tilde{N})) \leq a-r+1$ (see \cite{Mts1989}, Theorem 13.10). Thus, $r+d-1\leq a$, as required. 
 
 For the claim, let $P\subsetneq \m$ be a prime ideal. Then $N_P$ is a reduction of $M_P = F_P$. By Lemma 2.4.1 in \cite{KtzRce2008}, a free module has no proper reductions. Thus, $N_P = F_P$, showing that $N$ has finite colength in $F$.
\end{proof}

\begin{proposition}\label{prop:ansp}
With notation as in Proposition \ref{prop:exist}, the analytic spread
$$
a(S(M_1\oplus \cdots \oplus M_q)) = r_1+\cdots+r_q+d-q.
$$
\end{proposition}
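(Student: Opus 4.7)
The plan is to deduce the formula from Lemma~\ref{lem:dimsm} by comparing the Kirby--Rees multi-graded analytic spread $a(G)$ of $G := S(M_1\oplus\cdots\oplus M_q)$ with the classical single-graded analytic spread of the same ring under its total-degree grading. After the harmless reduction to the case where at least one $M_k$ is a proper submodule of $F_k$ (the case where every $M_k = F_k$ being trivial), the total-degree re-grading of $G$ is precisely the symmetric algebra of the finite-colength submodule $M_1\oplus\cdots\oplus M_q$ of the free $R$-module $F_1\oplus\cdots\oplus F_q$ of rank $r_1+\cdots+r_q$. Lemma~\ref{lem:dimsm} then immediately yields $\dim(G/\m G) = r_1+\cdots+r_q+d-1$, where $\dim$ denotes Krull dimension.

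To translate this into a statement about $a(G)$, I would apply Lemma~1.7 of \cite{KrbRes1994} to rewrite $a(G) = a(\Delta G) = \dim(\Delta G/\m\Delta G)$, where $\Delta G$ is the $\mathbb{N}$-graded diagonal subring with $n$-th graded piece $S_n(M_1)\cdots S_n(M_q)$. Both Krull dimensions can then be read off from the multi-graded Hilbert function $P(n_1,\ldots,n_q) = \mu_R(S_{n_1}(M_1)\cdots S_{n_q}(M_q))$: the Hilbert function of $G/\m G$ in its total-degree structure is the simplex sum $Q(n) = \sum_{n_1+\cdots+n_q=n} P(n_1,\ldots,n_q)$, while the Hilbert function of $\Delta G/\m\Delta G$ is the diagonal restriction $P(n,\ldots,n)$. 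A convolution-over-simplex argument will give $\deg Q = \deg_{\mathrm{tot}} P + (q-1)$, and a second application of Kirby--Rees Lemma~1.7 to the multi-graded $k$-algebra $G/\m G$ itself will yield $\deg P(n,\ldots,n) = \deg_{\mathrm{tot}} P$. Combining these with the first step produces $\deg_{\mathrm{tot}} P = r_1+\cdots+r_q+d-q-1$, and hence
\[
a(G) = \deg P(n,\ldots,n) + 1 = r_1+\cdots+r_q+d-q.
\]

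The main obstacle I expect is justifying the two Hilbert-polynomial-degree identities rigorously: both rely on the top-degree homogeneous part of $P$ being non-vanishing on the diagonal $(1,\ldots,1)$ and having positive average on the standard $(q-1)$-simplex. Both facts will follow from the non-negativity (and the existence of at least one positive entry) among the mixed-multiplicity coefficients that constitute the leading homogeneous term of $P$, a standard output of the general theory of multi-graded Hilbert polynomials.
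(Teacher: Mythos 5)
Your proposal follows essentially the same path as the paper's proof: reduce to the case where at least one $M_k$ is a proper submodule of $F_k$, observe that the total-degree re-grading of $S(M)$ is the symmetric algebra of the finite-colength module $M_1 \oplus \cdots \oplus M_q$ so that Lemma~\ref{lem:dimsm} applies, and then compare the simplex sum $Q(n)$ with the diagonal restriction $P(n,\ldots,n)$ of the multi-graded Hilbert function of $S(M)/\mathfrak{m}S(M)$. The paper tracks the partial degrees $t_i = \deg_{n_i} f$ and works with $t_1 + \cdots + t_q$ in place of your $\deg_{\mathrm{tot}} P$; under the positivity that both arguments tacitly require, these two quantities coincide, so this is an inessential bookkeeping variant rather than a different route.

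One step needs fixing. The identity $\deg P(n,\ldots,n) = \deg_{\mathrm{tot}} P$ does not follow from a ``second application'' of Kirby--Rees Lemma~1.7. That lemma identifies the Kirby--Rees dimension and analytic spread of a multi-graded algebra with those of its diagonal subring $\Delta G$; it says nothing about the total degree of the Hilbert polynomial. The identity you want is exactly the assertion that the leading homogeneous part of $P$ does not vanish at $(1,\ldots,1)$, which is the positivity fact you already name in your final paragraph --- so the justification should point there, not to Lemma~1.7. (The companion claim $\deg Q = \deg_{\mathrm{tot}} P + (q-1)$ similarly requires that the leading form of $P$ has positive integral over the standard simplex, i.e.\ non-negative coefficients with at least one positive.) The paper has the analogous gap: it cites Theorem~2.1.7 of Roberts for the polynomial structure of $f$ and then asserts the simplex-sum degree $t_1+\cdots+t_q+q-1$ and the diagonal degree $t_1+\cdots+t_q$ without further justification; both assertions, like yours, ultimately rest on the non-negativity of the normalized leading coefficients of the multi-graded Hilbert polynomial of the standard multi-graded $k$-algebra $S(M)/\mathfrak{m}S(M)$. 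If you make that positivity fact explicit, your argument is sound.
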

     
\begin{proof} Suppose that $M_k$ is generated by $m_k$ elements. Set $M = M_1 \oplus \cdots \oplus M_q \subseteq F_1 \oplus \cdots \oplus F_q = F$, and consider the ${\mathbb N}^q$-graded ring $\frac{S(M)}{{\mathfrak m}S(M)}$ whose graded component of degree $(n_1,\cdots,n_q)$  is given by $\frac{S_{n_1}(M_1)S_{n_2}(M_2) \cdots S_{n_q}(M_q)}{{\mathfrak m} S_{n_1}(M_1)S_{n_2}(M_2) \cdots S_{n_q}(M_q)}$. 

By restricting to the diagonal, the analytic spread of $S(M)$ is one more than the  dimension of the ${\mathbb N}$-graded ring 
$$
\frac{\Delta S(M)}{{\mathfrak m} \Delta S(M)} = \bigoplus_{n=0}^\infty \frac{S_n(M_1)S_n(M_2) \cdots S_n(M_d)}{{\mathfrak m} S_n(M_1)S_n(M_2) \cdots S_n(M_d)},
$$
and therefore also one more than the degree of the polynomial that gives
$$\mu(S_n(M_1)S_n(M_2) \cdots S_n(M_d))$$ for large $n$. We show that this degree equals $r_1+\cdots+r_q+d-q-1$. 
Let
$$f(n_1,\cdots,n_q) = dim_k \left( \frac{S_{n_1}(M_1)S_{n_2}(M_2) \cdots S_{n_q}(M_q)}{{\mathfrak m} S_{n_1}(M_1)S_{n_2}(M_2) \cdots S_{n_q}(M_q)}\right).$$
Standard results about ${\mathbb N}^q$-graded rings  - see Theorem 2.1.7 of \cite{Rbr1998} - imply that $f(n_1,\cdots,n_q)$ is given by a polynomial function in $n_1,\cdots,n_q$ for all $n_1,\cdots,n_q$ sufficiently large which is of degree  $t_1 \leq m_1-1$ in $n_1$, $\cdots$,  $t_q \leq m_q-1$ in $n_q$.

Next, regard  $\frac{S(M)}{{\mathfrak m}S(M)}$ as an ${\mathbb N}$-graded module with $n$th-degree component  given by
$$
\bigoplus_{n_1+\cdots+n_q= n} \frac{S_{n_1}(M_1)S_{n_2}(M_2) \cdots S_{n_q}(M_q)}{{\mathfrak m} S_{n_1}(M_1)S_{n_2}(M_2) \cdots S_{n_q}(M_q)},
$$
whose dimension is given by
$$
\sum_{n_1+\cdots+n_q= n} f(n_1,\cdots,n_q),
$$
which is therefore  a polynomial in $n$ of degree $t_1+\cdots+t_q+q-1$, for all $n$ sufficiently large. However, by Lemma \ref{lem:dimsm}, this degree equals $ r_1+\cdots+r_q+d-2$. Thus,
$t_1+\cdots+t_q+q-1 = r_1+\cdots+r_q+d-2$.

Finally, $\mu(S_n(M_1)S_n(M_2) \cdots S_n(M_q))$ is given by $f(n,n,\cdots,n)$ which, for large $n$,  is a polynomial in $n$ of degree $t_1+\cdots+t_q = r_1+\cdots+r_q+d-q-1$, as needed.
\end{proof}
\color{black}

We show that joint reductions can be detected going modulo every minimal prime.

 \begin{proposition}\label{prop:reductiontodomain}
 Let $(R,{\mathfrak m},k)$ be a Noetherian local ring of  positive  dimension $d$ with infinite residue field. 
Let $M_1 \subseteq F_1,M_2 \subseteq F_2, \cdots,$ $ M_q \subseteq F_q$ be $R$-submodules  of  finite colength in free   $R$-modules $F_1,\cdots,F_q$ of ranks $r_1,\cdots,r_q$.
Let $B_1 \subseteq M_1,\cdots,B_q \subseteq M_q$ be submodules such that $B_k$ is $r_k$-generated. Then $(B_1,\cdots,B_q)$ is a joint reduction of $(M_1,\cdots,M_q)$
iff for every minimal prime $P$ of $R$, $(\overline{B_1},\cdots,\overline{B_q})$ is a joint reduction of $(\overline{M_1},\cdots,\overline{M_q})$ where $$\overline{B_k} = \frac{B_k+PF_k}{PF_k} \subseteq  \frac{M_k+PF_k}{PF_k} = \overline{M_k} \subseteq \frac{F_k}{PF_k} = \overline{F_k}$$ denote the modules obtained by reducing modulo $P$.\end{proposition}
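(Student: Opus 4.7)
The plan is to prove both directions, with the ``only if'' direction being immediate and the ``if'' direction requiring a small but essential lifting argument that works around the fact that $S(\overline{M})$ is in general a proper quotient of $S(M)/PS(M)$.

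For each minimal prime $P$ of $R$, the $R$-linear surjections $M_k \twoheadrightarrow \overline{M_k} = M_k/(M_k \cap PF_k)$ induce a surjective $R$-algebra homomorphism $\psi_P : S(M) \twoheadrightarrow S(\overline{M})$ carrying each $B_k$ onto $\overline{B_k}$. Applying $\psi_P$ to the joint reduction equation in $S(M)$ at level $n$ immediately produces the corresponding equation in $S(\overline{M})$ at the same level, handling the ``only if'' direction.

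For the converse, I would switch to the equivalent ``irrelevant ideal'' formulation: setting $\delta := S(M)_{(1,1,\ldots,1)}$ and $(B) := (B_1,\ldots,B_q)S(M)$, the joint reduction condition amounts to $(\delta)^N \subseteq (B)$ for some $N$. Since the symmetric algebra of a free module commutes with quotients, $S(F/PF) = S(F)/PS(F)$, so $\psi_P$ factors as $S(M) \hookrightarrow S(F) \twoheadrightarrow S(F)/PS(F)$ (landing in $S(\overline{M}) \subseteq S(\overline{F})$), giving the kernel identification
\[
K_P := \ker\psi_P = S(M) \cap PS(F).
\]
This contains $PS(M)$ but can be strictly larger (since $M_k$ need not be flat over $R/P$ inside $F_k$), and that strict containment is precisely what makes the result nontrivial. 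Using that the joint reduction equation at level $n$ implies it at level $n+1$ (multiply both sides by $\delta$), I can choose a single $N$ so that $\overline{\delta}^N \subseteq (\overline{B})$ holds in $S(\overline{M})$ for every minimal prime $P_1,\ldots,P_t$ of $R$. Pulling back under each $\psi_{P_i}$ gives
\[
\delta^N \subseteq (B) + K_{P_i} \quad \text{in } S(M), \quad i=1,\ldots,t.
\]

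The main step is a short iteration that combines the $t$ inclusions. Using the ideal-absorption rules $(B)^2 \subseteq (B)$ and $(B)\cdot K_{P_i} \subseteq (B)$, multiplying the $t$ inclusions together yields
\[
\delta^{tN} \subseteq (B) + L, \qquad L := K_{P_1}K_{P_2}\cdots K_{P_t}.
\]
The final observation is that $L$ is nilpotent in $S(M)$: each $K_{P_i} \subseteq P_iS(F)$, so $L \subseteq (P_1\cdots P_t)S(F) \subseteq \sqrt{(0)}\cdot S(F)$, and the latter is nilpotent because $\sqrt{(0)}$ is a nilpotent ideal in the Noetherian ring $R$. If $L^c = 0$, expanding $((B)+L)^c$ (every cross term is absorbed into $(B)$) yields $\delta^{tNc} \subseteq (B)$, which is the required irrelevance and therefore shows that $(B_1,\ldots,B_q)$ is a joint reduction of $(M_1,\ldots,M_q)$.

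The hard part will be recognising that one cannot simply reduce modulo $PS(M)$, since the kernel $K_P$ of $\psi_P$ can be strictly larger than $PS(M)$; the product-of-ideals device in the last paragraph is what allows this larger kernel to be controlled by the manifestly nilpotent ideal $\sqrt{(0)}\cdot S(F)$ of $S(F)$.
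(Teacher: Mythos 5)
Your proof is correct and follows essentially the same strategy as the paper's: lift the joint reduction data modulo each minimal prime to obtain inclusions with error terms in $P_i S(F)$, multiply over the finitely many minimal primes so the error lands in $\sqrt{(0)}\,S(F)$, and finish by nilpotence of the nilradical. The paper carries out the multiplications at the level of explicit degree-homogeneous generators (and extracts the bound $N=(n+1)st-1$ on the joint reduction number), whereas you phrase the same computation in the irrelevant-ideal formulation via the kernel $K_P = S(M)\cap PS(F)$; the reformulation is cleaner but not a genuinely different route, and your bookkeeping (including the correct observation that $K_P$ can be strictly larger than $PS(M)$) is sound.
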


\begin{proof} The joint reduction equation for $(B_1,\cdots,B_q)$ is an equality of two submodules of the free $R$-module $S_{n+1}(F_1)\cdots S_{n+1}(F_q)$. Choose bases for $F_1,\cdots,F_q$. Elements of these submodules may then be regarded as column vectors with respect to the natural monomial bases of $S_{n+1}(F_1)\cdots S_{n+1}(F_q)$. Reducing all entries of these vectors modulo (any prime ideal) $P$ gives submodules over $\frac{R}{P}$ of $S_{n+1}(\overline{F_1})S_{n+1}(\overline{F_2})\cdots S_{n+1}(\overline{F_q})$ whose equality is the joint reduction equation for $(\overline{B_1},\cdots,\overline{B_q})$, thereby proving (more than) the easier direction of the equivalence of the two conditions.

For the other direction, suppose that for every minimal prime $P$, $(\overline{B_1},\cdots,\overline{B_q})$ is a joint reduction of $(\overline{M_1},\cdots,\overline{M_q})$.
Choose $n \in {\mathbb N}$ large enough so that we have joint reduction equations modulo each $P$. Lifting these equations shows that
\begin{align*}S_{n+1}(M_1)S_{n+1}(M_2)\cdots S_{n+1}(M_q) &\subseteq B_1S_{n}(M_1)S_{n+1}(M_2)\cdots S_{n+1}(M_q)\\
 &+ S_{n+1}(M_1)B_2S_{n}(M_2)\cdots S_{n+1}(M_q)\\
 &+\cdots\\
 &+S_{n+1}(M_1)S_{n+1}(M_2)\cdots B_qS_{n}(M_q)\\
  &+ PS_{n+1}(F_1)S_{n+1}(F_2)\cdots S_{n+1}(F_q)
\end{align*}
for each minimal prime $P$.
If the number of minimal primes is $s$ and ${\mathfrak n}^t = 0$ where ${\mathfrak n}$ is the nilradical of $R$, then we will show that $(B_1,\cdots,B_q)$ is a joint reduction of $(M_1,\cdots,M_q)$ with joint reduction number at most $N = (n+1)st-1$.

Consider a generator of $S_{N+1}(M_1)S_{N+1}(M_2)\cdots S_{N+1}(M_q)$. Regard it as a product of $N+1$ linear forms in the basis of $F_1$, $N+1$ linear forms in the basis of $F_2$, $\cdots$, $N+1$ linear forms in the basis of $F_q$ - the forms corresponding to generators of $M_1,\cdots,M_q$. Explicitly write it as $L = L^1_1\cdots L^1_{N+1}L^2_1\cdots L^2_{N+1}\cdots L^q_1\cdots L^q_{N+1}$ and regroup as $H_1\cdots H_t$ where each $H_i$ is a product of $(n+1)s$ of the $L^1_1\cdots L^1_{N+1}$, $(n+1)s$ of the $L^2_1\cdots L^2_{N+1}$, $\cdots$, $(n+1)s$ of the $L^q_1\cdots L^q_{N+1}$.

Any single $H_i$ is a product of $s$ generators of $S_{n+1}(M_1)S_{n+1}(M_2)\cdots S_{n+1}(M_q)$, say
$Q_1\cdots Q_s$. So there exist $Q_1^\prime, \cdots, Q_s^\prime$ in
\begin{align*}& B_1S_{n}(M_1)S_{n+1}(M_2)\cdots S_{n+1}(M_q)\\
 &+ S_{n+1}(M_1)B_2S_{n}(M_2)\cdots S_{n+1}(M_q)\\
 &+\cdots\\
 &+S_{n+1}(M_1)S_{n+1}(M_2)\cdots B_qS_{n}(M_q)
\end{align*}
such that $Q_j - Q_j^\prime \in P_jS_{n+1}(F_1)S_{n+1}(F_2)\cdots S_{n+1}(F_q)$. Multiply all these equations over $j=1,\cdots,s$ in $S(F)$ and collect terms to see that there exists $H_i^\prime$ in
\begin{align*}& B_1S_{(n+1)s-1}(M_1)S_{(n+1)s}(M_2)\cdots S_{(n+1)s}(M_q)\\
 &+ S_{(n+1)s}(M_1)B_2S_{(n+1)s-1}(M_2)\cdots S_{(n+1)s}(M_q)\\
 &+\cdots\\
 &+S_{(n+1)s}(M_1)S_{(n+1)s}(M_2)\cdots B_qS_{(n+1)s-1}(M_q)
\end{align*}
such that $H_i-H_i^\prime \in {\mathfrak n}S_{n+1}(F_1)S_{n+1}(F_2)\cdots S_{n+1}(F_q)$. 

Finally, multiplying all these equations over $i=1,\cdots,t$ and collecting terms shows that $L = H_1\cdots H_t$ is in
\begin{align*} &B_1S_{N}(M_1)S_{N+1}(M_2)\cdots S_{N+1}(M_q)\\
 &+ S_{N+1}(M_1)B_2S_{N}(M_2)\cdots S_{N+1}(M_q)\\
 &+\cdots\\
 &+S_{N+1}(M_1)S_{N+1}(M_2)\cdots B_qS_{N}(M_q),
\end{align*}
as desired.
\end{proof}

We have chosen an equational definition of joint reductions. We  now show that it is equivalent to the valuative and determinantal definitions. The notation $det(B_k)$ in the next proposition refers to the determinant of a matrix whose columns represent a system of generators of $B_k$ in terms of a basis of $F_k$. It is determined up to unit multiple by the submodule $B_k \subseteq F_k$.

\begin{theorem}\label{thm:equiv}
Let $(R,{\mathfrak m},k)$ be a Noetherian local ring of  positive  dimension $d$ with infinite residue field. 
Let $M_1 \subseteq F_1,M_2 \subseteq F_2, \cdots,$ $ M_q \subseteq F_q$ be $R$-submodules  of  finite colength in free   $R$-modules $F_1,\cdots,F_q$ of ranks $r_1,\cdots,r_q$.
Let $B_1 \subseteq M_1,\cdots,B_q \subseteq M_q$ be submodules such that $B_k$ is $r_k$-generated for $k=1,2,\cdots,q$. The following are equivalent.
\begin{enumerate}
\item Equational: $(B_1,\cdots,B_q)$ is a joint reduction of $(M_1,\cdots,M_q)$ as in Definition \ref{def:jtred}.
\item Valuative: For every minimal prime $P$ of $R$ and discrete valuation ring $V$ between $R/P$ and its field of fractions, at least one $\overline{B_k}V = \overline{M_k}V$, with $\overline{B_k}$, $\overline{M_k}$ as in Proposition \ref{prop:reductiontodomain}.
\item Determinantal: $(det(B_1),\cdots,det(B_q))$ is a joint reduction of $(I_1,\cdots,I_q)$ where $I_i$ $( = I(M_i))$ denotes the $0$th Fitting ideal of $F_i/M_i$ (which is the ideal of $r_i$-sized minors of a matrix whose columns represent generators of $M_i$ with respect to a basis of $F_i$).
\end{enumerate}
\end{theorem}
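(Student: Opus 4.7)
The plan is to reduce all three conditions to a single statement verifiable at every discrete valuation ring (DVR) lying between $R$ and its fraction field, and then to synthesise with the classical valuative criteria. First I would reduce to the case that $R$ is a Noetherian local domain: Proposition~\ref{prop:reductiontodomain} handles condition~(1), the analogous classical reduction handles~(3) (because both the Fitting ideal $I(M_k)=\mathrm{Fitt}_0(F_k/M_k)$ and the determinant $\det(B_k)$ commute with passage to $R/P$), and (2) is already phrased modulo minimal primes.

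Next, I would fix a DVR $V$ with $R\subseteq V\subseteq K=\mathrm{Frac}(R)$, uniformiser $\pi$ and normalised valuation $v$, and analyse the three conditions after base change to $V$. Since each $M_k$ has rank $r_k$ (from finite colength) and $B_k$ is $r_k$-generated (with $\det(B_k)\neq 0$, which is required for (3) to be non-trivial), both $M_kV$ and $B_kV$ are free $V$-submodules of $F_kV\cong V^{r_k}$ of rank $r_k$. Applying Smith normal form to the inclusion $B_kV\hookrightarrow M_kV$ gives a $V$-basis $g_{k,1},\ldots,g_{k,r_k}$ of $M_kV$ such that $B_kV=\bigoplus_j\pi^{d_{k,j}}Vg_{k,j}$ with $d_{k,j}\geq 0$. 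I would then read off two facts: (i) the $V$-length of $F_kV/M_kV$ equals $v(I(M_k))$ and that of $F_kV/B_kV$ equals $v(\det B_k)$, whence $B_kV=M_kV \Leftrightarrow v(\det B_k)=v(I(M_k)) \Leftrightarrow \det(B_k)V=I(M_k)V$; and (ii) the subring $S(M_V)\subseteq S(F_V)$ is the polynomial ring $V[g_{k,j}]$ in $r_1+\cdots+r_q$ variables (the $g_{k,j}$ being $V$-linearly independent linear forms in $S(F_V)$), the ideal generated by $B_{1,V},\ldots,B_{q,V}$ in it is $(\{\pi^{d_{k,j}}g_{k,j}\})$, and a direct monomial-coefficient analysis shows that this ideal contains a power of $S(M_V)_{(1,\ldots,1)}$ if and only if some $k$ has all $d_{k,j}=0$, i.e.\ $B_kV=M_kV$.

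With these in hand, (2)$\Leftrightarrow$(3) follows from~(i) together with Rees's classical valuative criterion for joint reductions of $\m$-primary ideals. The implication (1)$\Rightarrow$(2) follows by observing that the equational condition in $R$, being an equality of submodules of the free $R$-module $S_{n+1}(F_1)\cdots S_{n+1}(F_q)$, induces the corresponding equality of $V$-images and so forces some $B_kV=M_kV$ by~(ii). The remaining direction (3)$\Rightarrow$(1) will be the most delicate: starting from the ideal relation $I_1^{n+1}\cdots I_q^{n+1}=\sum_k\det(B_k)I_1^{n+1}\cdots I_k^n\cdots I_q^{n+1}$, I would multiply through by $S_{n+1}(F_1)\cdots S_{n+1}(F_q)$ and use Cramer's rule ($\det(B_k)F_k\subseteq B_k$) together with $I(M_k)F_k\subseteq M_k$ to rewrite each term as a submodule of $B_k S_{n+1}(M_1)\cdots S_n(M_k)\cdots S_{n+1}(M_q)$. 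The main obstacle I anticipate is bridging the resulting containment, which naturally lives inside $S(F)$, back to an equality inside $S(M)$; this should be handled via a graded Noetherian or Artin--Rees argument applied to $S(F)$ as an $S(M)$-module.
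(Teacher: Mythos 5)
Your reduction to a domain, the Smith normal form analysis at a DVR giving your facts (i) and (ii), the proof of (1)$\Rightarrow$(2), and the equivalence (2)$\Leftrightarrow$(3) via the classical valuative criterion for joint reductions of $\m$-primary ideals all mirror the paper's proof and are correct. Where you diverge is in closing the circle: the paper does not attempt (3)$\Rightarrow$(1) directly, proving (2)$\Rightarrow$(1) instead, and the ingredient it uses there is exactly what your sketch lacks.

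The gap is concrete. Multiplying $I_1^{n+1}\cdots I_q^{n+1}=\sum_k \det(B_k)I_1^{n+1}\cdots I_k^n\cdots I_q^{n+1}$ by $S_{n+1}(F_1)\cdots S_{n+1}(F_q)$ and applying Cramer's rule ($\det(B_k)F_k\subseteq B_k$ and $I_kF_k\subseteq M_k$) only yields the containment
\[
I_1^{n+1}\cdots I_q^{n+1}\,S_{n+1}(F_1)\cdots S_{n+1}(F_q)\ \subseteq\ \sum_{k=1}^{q} B_k\,S_n(M_k)\,S_{n+1}(M_1)\cdots\widehat{S_{n+1}(M_k)}\cdots S_{n+1}(M_q),
\]
and the module on the left is in general a \emph{proper} submodule of $S_{n+1}(M_1)\cdots S_{n+1}(M_q)$, since the inclusions $I_k^{n+1}S_{n+1}(F_k)\subseteq S_{n+1}(M_k)$ furnished by Cramer's rule need not be equalities and do not become so for large $n$. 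So this route never produces the full product $S_{n+1}(M_1)\cdots S_{n+1}(M_q)$ on the left-hand side, and a ``graded Noetherian or Artin--Rees'' argument on $S(F)$ over $S(M)$ does not obviously repair this: the difficulty is not stabilizing a filtration but promoting a valuative containment to the equational statement that an explicit ideal of the multigraded ring is irrelevant.

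The tool the paper uses is Rees's valuative criterion for reductions of a \emph{single} module (Theorem~1.5(ii) of \cite{Res1987}). Given (2), set
$N = B_1M_2\cdots M_q + M_1B_2M_3\cdots M_q + \cdots + M_1\cdots M_{q-1}B_q \subseteq M_1\cdots M_q \subseteq F_1\cdots F_q$.
Then $NV=(M_1\cdots M_q)V$ for every DVR $V$ between $R$ and $K$, so by Rees's theorem $N$ is an equational reduction of the single module $M_1\cdots M_q$, i.e.\ $S_{n+1}(M_1\cdots M_q)=N\,S_n(M_1\cdots M_q)$ for some $n$. Pushing this identity along the natural surjection $S(F_1\cdots F_q)\twoheadrightarrow \Delta S(F)$ onto the diagonal subalgebra (which sends $S_m(M_1\cdots M_q)$ onto $S_m(M_1)\cdots S_m(M_q)$ and the image of $N$ onto $B_1M_2\cdots M_q+\cdots+M_1\cdots M_{q-1}B_q$ inside $\Delta S(M)$) converts it into the joint-reduction equation of Definition~\ref{def:jtred}. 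That single-module valuative-to-equational theorem, together with the pass-to-the-diagonal step, is the bridge your proof is missing; without it, the (3)$\Rightarrow$(1) (equivalently (2)$\Rightarrow$(1)) direction does not close.
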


\begin{proof} Using Proposition \ref{prop:reductiontodomain} we may assume that $R$ is a domain (of positive dimension) with field of fractions $K$ in which case (2) just says that for any discrete valuation ring $V$ between $R$ and $K$, at least one $B_kV = M_kV$.\\
(1) $\Rightarrow$ (2): Let ${\mathfrak n} = tV$ be the maximal ideal of $V$. Extend the joint reduction equation to $V$ and abuse notation to denote the extensions of various modules to $V$ by the same symbols. For instance, $M_1$ will now denote $M_1V$ which is a free $V$-module of rank $r_1$.
Choose appropriate bases $\{T_{11},\cdots,T_{1r_1}\}$ for $M_1$, $\{T_{21},\cdots,T_{2r_2}\}$ for $M_2$ and so on, so that $B_1$, $B_2$, $\cdots$, $B_q$ are diagonal with respect to these bases. Say $B_1$ has basis $\{t^{m_{11}}T_{11},\cdots,t^{m_{1r_1}}T_{1r_1}\}$, $B_2$ has basis $\{t^{m_{21}}T_{21},\cdots,t^{m_{2r_2}}T_{2r_2}\}$ and so on. Here the $m_{kj} \in {\mathbb N}\cup \{\infty\}$ with $m_{kj} = \infty$ meaning that $t^{m_{kj}} = 0$.

A basis of $S_{n+1}(M_1)S_{n+1}(M_2)\cdots S_{n+1}(M_q)$ is clearly given by all monomials of the form $$T_{11}^{a_{11}}\cdots T_{1r_1}^{a_{1r_1}}T_{21}^{a_{21}}\cdots T_{2r_2}^{a_{2r_2}}\cdots T_{q1}^{a_{q1}}\cdots T_{qr_q}^{a_{qr_q}}$$ where $a_{11}+\cdots+a_{1r_1} = a_{21}+\cdots+a_{2r_2} = \cdots = a_{q1}+\cdots+a_{qr_q} = n+1$. 

A little thought shows that a basis of $B_1S_{n}(M_1)S_{n+1}(M_2)\cdots S_{n+1}(M_q)$ is given by  all terms of the form 
$$T_{11}^{a_{11}}\cdots T_{1r_1}^{a_{1r_1}}T_{21}^{a_{21}}\cdots T_{2r_2}^{a_{2r_2}}\cdots T_{q1}^{a_{q1}}\cdots T_{qr_q}^{a_{qr_q}}t^{ min \{m_{1j}: a_{1j}>0\} }$$ where, as before,  $a_{11}+\cdots+a_{1r_1} = a_{21}+\cdots+a_{2r_2} = \cdots = a_{q1}+\cdots+a_{qr_q} = n+1$. Similarly a basis of $S_{n+1}(M_1)S_{n+1}(M_2)\cdots B_kS_{n}(M_k)\cdots S_{n+1}(M_q)$ is given by  all terms of the form 
$$T_{11}^{a_{11}}\cdots T_{1r_1}^{a_{1r_1}}T_{21}^{a_{21}}\cdots T_{2r_2}^{a_{2r_2}}\cdots T_{q1}^{a_{q1}}\cdots T_{qr_q}^{a_{qr_q}}t^{ min \{m_{kj}: a_{kj}>0\} }$$ where again, $a_{11}+\cdots+a_{1r_1} = a_{21}+\cdots+a_{2r_2} = \cdots = a_{q1}+\cdots+a_{qr_q} = n+1$.

Thus if some $m_{1j_1} >0$, some $m_{2j_2}>0$, $\cdots$, some $m_{qj_q} >0$, then, $T_{1j_!}^nT_{2j_2}^n\cdots T_{qj_q}^n$ is in  $S_{n}(M_1)\cdots S_{n}(M_q)$ but not to the right hand side of the joint reduction equation.
So for the joint reduction equation to hold when extended to $V$, all $m_{kj}$ must vanish for some $k$, and then for this $k$, $B_k=M_k$ or, reverting to the original notation, the extended module $B_kV = M_kV$.  \\
(2) $\Rightarrow$ (3): Let $V$ be a discrete valuation ring between $R$ and $K$. To say that $B_kV = M_kV \subseteq F_kV$ is equivalent to saying that $det(B_k)V = I_kV$  and therefore by (2), this holds for some $k$. Hence
$$
I_1I_2 \cdots I_qV = (det(B_1)I_2I_3\cdots I_q + det(B_2)I_1I_3I_4 \cdots I_q + \cdots + det(B_q)I_1I_2\cdots I_{q-1})V.
$$
Since this is true for every discrete valuation ring between $R$ and $K$, it follows that $(det(B_1),\cdots,det(B_q))$ is a joint reduction of $(I_1,\cdots,I_q)$.\\
(3) $\Rightarrow$ (2):  If  $(det(B_1),\cdots,det(B_q))$ is a joint reduction of $(I_1,\cdots,I_q)$ then the equation above holds for each discrete valuation ring $V$ between $R$ and its field of fractions. If each $det(B_k)V \subsetneq I_kV$, then $det(B_k)V \subseteq {\mathfrak n}I_kV$ and so this equation could not possibly hold. Thus some $det(B_k)V = I_kV$ or equivalently $B_kV = M_kV$.\\
(2) $\Rightarrow$ (1): Consider the submodules 
$$
N = B_1M_2\cdots M_q + M_1B_2M_3 \cdots M_q + \cdots + M_1\cdots M_{q-1}B_q \subseteq M_1M_2\cdots M_q
$$ 
of $F_1\cdots F_q$. From (2) it follows that these are are equal when extended to any discrete valuation ring $V$ between $R$ and $K$, or that $N$ is a reduction of $M_1\cdots M_q$  in the sense of \cite{Res1987}. By Theorem 1.5(ii) of \cite{Res1987} it follows that the ideal generated by $N$ in $S(M_1\cdots M_q) \subseteq S(F_1\cdots F_q)$ is irrelevant. So there is an $n \in {\mathbb N}_0$ such that $S_{n+1}(M_1\cdots M_q) = NS_n(M_1\cdots M_q)$.

The ${\mathbb N}$-graded $R$-algebra $\Delta S(F)$ is generated over $R$ by $(\Delta S(F))_1 = F_1\cdots F_q$ and so there is a natural surjective $R$-algebra  homomorphism
from $S(F_1\cdots F_q)$ onto $\Delta S(F)$. Taking the images under this homomorphism of the equality in the last line of the previous paragraph gives the required joint reduction equation for $(B_1,\cdots,B_q)$.
\end{proof}

The next property of joint reductions we would like to show is that minimal generating sets of $B_k$ can be extended to minimal generating sets of $M_k$, and that when $R$ has positive depth, each $B_k$ can be chosen to be a free module (see Theorem \ref{thm:properties} below). For this, we need a few preliminary results. 

\begin{lemma}
Let $k$ be an infinite field. For $r \leq n$, set $N=\binom{n}{r}$. Consider the map $$M_{r \times n}(k) \rightarrow k^N: A \mapsto (A_I)_{I \subseteq [n],|I|=r}$$ given by taking an $r \times n$ matrix $A$ over $k$ to the $N$-vector of all its maximal minors. The image of this map is not contained in any union of finitely many proper linear subspaces of $k^N$.
\end{lemma}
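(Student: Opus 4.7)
The plan is to reduce the claim to a statement about linear independence of polynomials. First, I would observe that since $k$ is infinite, a finite union of proper linear subspaces $V_1 \cup \cdots \cup V_m$ of $k^N$ can be realised as the zero locus of a single nonzero polynomial, namely the product $L_1 L_2 \cdots L_m$, where $L_i$ is a nonzero linear form on $k^N$ vanishing on $V_i$. Thus it suffices to produce, for any such finite collection of nonzero linear forms $L_1,\ldots,L_m$, a matrix $A \in M_{r \times n}(k)$ on whose minor-vector none of the $L_i$ vanish.

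Writing the coordinates of $k^N$ as $y_I$ indexed by subsets $I \subseteq [n]$ of size $r$, the pullback $P_i(A) := L_i\bigl((A_I)_{|I|=r}\bigr)$ is a polynomial in the $rn$ entries of $A$. Since $k$ is infinite and $M_{r\times n}(k) \cong k^{rn}$, the complement of a proper Zariski closed subset is nonempty, and the intersection of the finitely many open sets $\{A : P_i(A) \neq 0\}$ is nonempty as soon as each $P_i$ is a nonzero polynomial. So the entire problem reduces to the following claim.

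\begin{center}
\emph{The $N = \binom{n}{r}$ maximal minors $A_I$ of the generic $r \times n$ matrix $A = (a_{jk})$ are linearly independent as polynomials in the entries $a_{jk}$.}
\end{center}

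To verify the claim, I would expand
\[
A_I = \sum_{\sigma \in S_r} \mathrm{sgn}(\sigma)\, a_{1,i_{\sigma(1)}} a_{2,i_{\sigma(2)}} \cdots a_{r,i_{\sigma(r)}},
\]
for $I = \{i_1 < \cdots < i_r\}$, and note that every monomial appearing in $A_I$ uses exactly one entry from each row, with the multiset of column indices equal to $I$. Consequently monomials appearing in $A_I$ and $A_{I'}$ with $I \neq I'$ are distinct (for instance, the diagonal monomial $a_{1,i_1}\cdots a_{r,i_r}$ of $A_I$ appears in no other $A_{I'}$). A nontrivial linear relation $\sum_I c_I A_I = 0$ would therefore force all $c_I = 0$.

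The only conceptual point requiring care is the opening reduction — making sure that a finite union of proper linear subspaces is genuinely a proper Zariski closed set in $k^N$, which in turn uses that $k$ is infinite so that no product of nonzero linear forms can vanish identically. After that, the argument is essentially combinatorial and should proceed with no real obstacle.
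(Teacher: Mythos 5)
Your proposal is correct and follows essentially the same route as the paper: reduce to showing the minor map is not annihilated by any single nonzero linear form, then conclude via irreducibility of $M_{r\times n}(k)$ over the infinite field. The only variation is in the sub-step — the paper directly exhibits, for each nonzero $\sum_I \alpha_I X_I$, a $0$-$1$ matrix on which the form evaluates to $\alpha_{I_0} \neq 0$, while you establish linear independence of the minors as polynomials (via disjointness of their monomial supports) and then invoke the infinite-field non-vanishing principle a second time; both amount to the same ``pick out the diagonal monomial'' observation.
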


\begin{proof} The image of the map above is the cone over the Grassmannian $G_{r,n}$ and is the affine variety, say $Z$, in $k^N$ defined by the Plucker relations - see Section 2 of \cite{KlmLks1972}. Further $Z$ is not contained in any hyperplane. For, any hyperplane is defined by an equation $\sum_I \alpha_I X_I =0$ where not all the $\alpha_I$ vanish. If $\alpha_I \neq 0$ where $I = \{n_1 < \cdots < n_r\}$ then the image of the matrix $A$ whose $n_1,\cdots,n_r$ columns form the identity matrix (of size $r$) and whose other columns vanish is not contained in $Z$. So the intersection of any hyperplane with $Z$ is a proper Zariski closed subset of $Z$. If $Z$ were to be contained in a finite union of hyperplanes, then, taking inverse images,
$M_{r \times n}(k)$ would be a finite union of proper closed subsets. But $M_{r \times n}(k)$ is irreducible over an infinite field.
\end{proof}

\begin{lemma}
Let $J$ be an ideal generated by $z_1,\cdots,z_m$ and $P$ be a prime ideal that does not contain $J$. There is a  proper linear subspace $L$ in $k^m$ such that if $(\overline{a}_1,\cdots,\overline{a}_m) \notin L$, then $a_1z_1+\cdots+a_mz_m$ is not in $P$.
\end{lemma}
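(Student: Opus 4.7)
The plan is to construct $L$ as the mod-$\mathfrak m$ shadow of a naturally defined kernel. Consider the $R$-linear map
$$\phi: R^m \to R/P, \qquad (a_1,\ldots,a_m) \mapsto \sum_{i=1}^m a_i(z_i + P),$$
let $N := \ker(\phi)$, and define $L$ to be the image of $N$ under the canonical surjection $R^m \twoheadrightarrow R^m/\mathfrak{m}R^m = k^m$. Because $N$ is an $R$-submodule of $R^m$, its image $L$ is automatically a $k$-linear subspace of $k^m$.

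The desired separation property is immediate from this construction: by definition, $(\overline{a}_1,\ldots,\overline{a}_m) \in L$ if and only if some lift $(a_1,\ldots,a_m) \in R^m$ lies in $N$, i.e., satisfies $\sum_i a_i z_i \in P$. Taking the contrapositive, if $(\overline{a}_1,\ldots,\overline{a}_m) \notin L$, then no lift can satisfy $\sum_i a_i z_i \in P$, so $a_1z_1+\cdots+a_mz_m \notin P$ for every choice of representatives $a_i$, which is exactly what is required.

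The only substantive step—and hence the main obstacle—is showing that $L$ is proper, i.e., $L \neq k^m$. I would argue by contradiction: if $L = k^m$, then $N + \mathfrak{m}R^m = R^m$, and applying $\phi$ to both sides yields $\mathfrak{m}\cdot \operatorname{Im}(\phi) = \operatorname{Im}(\phi)$. But $\operatorname{Im}(\phi) = (J+P)/P$ is a finitely generated $R$-module (generated by the images of the $z_i$), and $R$ is Noetherian local, so Nakayama's lemma forces $\operatorname{Im}(\phi) = 0$. This means $J \subseteq P$, contradicting the hypothesis. Hence $L$ is a proper $k$-subspace of $k^m$ with the stated property. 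Note that primality of $P$ is not actually used in the argument—only that $J \not\subseteq P$.
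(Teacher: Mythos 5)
Your proof is correct and constructs the same subspace $L$ as the paper's proof (the image mod $\mathfrak{m}$ of the set of coefficient vectors $(a_1,\ldots,a_m)$ with $\sum a_i z_i \in P$), and both arguments ultimately rest on the same Nakayama step to establish properness; the paper packages this as the preimage in $k^m$ of the proper subspace $\frac{(P\cap J)+\mathfrak{m}J}{\mathfrak{m}J} \subsetneq \frac{J}{\mathfrak{m}J}$ under the surjection $k^m \twoheadrightarrow J/\mathfrak{m}J$, whereas you work directly with the kernel of $R^m \to R/P$ and apply Nakayama to $\operatorname{Im}(\phi)=(J+P)/P$. Your closing observation that primality of $P$ is unused is accurate and also implicit in the paper's argument.
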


\begin{proof} If $a_1z_1+\cdots+a_mz_m \in P$, then, $a_1z_1+\cdots+a_mz_m \in P \cap J$ which is an ideal properly contained in $J$. So
$$\overline{a}_1\overline{z}_1+\cdots+\overline{a}_m\overline{z}_m \in \frac{(P \cap J) + {\mathfrak m}J}{{\mathfrak m}J}$$
which is a proper subspace of $\frac{J}{{\mathfrak m}J}$. The collection of $(\overline{a}_1,\cdots,\overline{a}_m)$ that satisfy the above condition clearly form a linear subspace $L$ of $k^m$ which is proper since $\overline{z}_1,\cdots,\overline{z}_m$ span $\frac{J}{{\mathfrak m}J}$. Now if $(\overline{a}_1,\cdots,\overline{a}_m) \notin L$, then $a_1z_1+\cdots+a_mz_m$ is not in $P$, as needed.
\end{proof}

\begin{proposition}
Let $(R,{\mathfrak m},k)$ be a Noetherian local ring of positive depth and $M \subseteq F$ be a submodule of a finitely-generated free module $F$ of rank $r$ such that $J = I(M)$ is ${\mathfrak m}$-primary. Suppose that $M$ is generated by $\{m_1,\cdots,m_n\}$. Then there is a non-empty Zariski open subset $U$ of $M_{n \times r}(k)$ such that if $\overline{A} \in U$ (for $A \in M_{n \times r}(R)$), then the determinant of $[m_1 ~ \cdots ~ m_n]A$ (which is an $r \times r$ matrix over $R$) is a non-zero-divisor in $R$.
\end{proposition}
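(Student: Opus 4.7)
The plan is to expand the determinant via the Cauchy--Binet formula and then piece together the two preceding lemmas. Writing $\delta_I$ for the $r{\times}r$ minor indexed by $I \subseteq [n]$, $|I|=r$, we have
\[
\det\bigl([m_1 ~ \cdots ~ m_n]\,A\bigr) \;=\; \sum_{|I|=r} \delta_I\bigl([m_1 ~ \cdots ~ m_n]\bigr)\,\delta_I(A),
\]
and the elements $\delta_I([m_1~\cdots~m_n])$ generate the Fitting ideal $J = I(M)$. So if we set $N=\binom{n}{r}$ and let $\varphi\colon M_{n\times r}(k) \to k^N$ be the (polynomial) map sending a matrix to its vector of maximal minors, then $\det([m_1 ~ \cdots ~ m_n]\,A)$ is a linear combination of a fixed generating set of $J$ whose coefficients are read off from $\varphi(\overline{A})$.

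Now recall that the zero-divisors of $R$ are the union of the (finitely many) associated primes of $R$. Since $R$ has positive depth, ${\mathfrak m}$ is not associated, so no associated prime contains ${\mathfrak m}$, and in particular no associated prime $P$ contains the ${\mathfrak m}$-primary ideal $J$. For each such $P$ the second preceding lemma, applied to the generating set $\{\delta_I([m_1~\cdots~m_n])\}_I$ of $J$, yields a proper linear subspace $L_P \subseteq k^N$ with the property that
\[
\varphi(\overline{A}) \notin L_P \;\;\Longrightarrow\;\; \det\bigl([m_1 ~ \cdots ~ m_n]\,A\bigr) \notin P.
\]

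To finish, I would invoke the first preceding lemma, which says exactly that the image of $\varphi$ is not contained in any finite union of proper linear subspaces of $k^N$. Consequently each $\varphi^{-1}(L_P)$ is a proper Zariski closed subset of $M_{n\times r}(k)$ (if it were the whole space, $\operatorname{image}(\varphi)$ would lie in $L_P$). Taking $U$ to be the complement of the finite union $\bigcup_{P} \varphi^{-1}(L_P)$, over the finitely many associated primes $P$ of $R$, gives a Zariski open subset of $M_{n\times r}(k)$, which is non-empty because $M_{n \times r}(k)$ is irreducible (as affine space over an infinite field) and cannot be covered by finitely many proper closed subsets. For any $\overline{A} \in U$, the determinant of $[m_1 ~ \cdots ~ m_n]\,A$ avoids every associated prime of $R$ and is therefore a non-zero-divisor.

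There is no serious obstacle here: the two preceding lemmas were clearly set up exactly for this bookkeeping, and the only substantive observation is the use of positive depth to eliminate ${\mathfrak m}$ (and hence $J$) from the list of associated primes. The cleanest way to present the proof is to first set up $\varphi$ and the Cauchy--Binet identity, then explain that $\overline{A}\in U$ amounts to $\varphi(\overline{A})$ lying outside a single finite union of proper hyperplanes of $k^N$, and finally verify non-emptiness via the irreducibility argument recalled in the first lemma's proof.
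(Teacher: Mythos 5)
Your proof is correct and follows essentially the same route as the paper's: Cauchy--Binet to express the determinant as $\sum_I z_I A_I$, positive depth to guarantee that the $\mathfrak m$-primary ideal $J$ avoids every associated prime of $(0)$, the second lemma to produce proper linear subspaces $L_P$ of $k^N$, and the first lemma (the image of the maximal-minors map is not contained in any finite union of proper subspaces) to see that the preimage $U$ is non-empty. The only cosmetic difference is that the paper first constructs the open set $V = k^N \setminus \bigcup L_P$ and then observes that the image of $\varphi$ meets $V$, whereas you pass to preimages $\varphi^{-1}(L_P)$ first and invoke irreducibility of the affine space; the resulting $U$ is the same set.
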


\begin{proof} For an $r$-subset $I$ of $[n]$ let $z_I$ be the determinant of the matrix formed by the $I$ columns of $\{m_1,\cdots,m_n\}$ and let ${A}_I$ be the determinant of the matrix formed by the $I$ rows of ${A}$. The ideal $J$ is generated by all the $z_I$. Also it is not contained in any associated prime of $(0)$, since $depth(R) >0$.

For each associated prime $P$ of $(0)$ there is a proper linear subspace of $k^N$ (where $N =\binom{n}{r}$) such that if $(\overline{a}_I)_{I \subseteq [n],|I|=r}$ is not in that subspace, then, $\sum_I a_Iz_I \notin P$. Let $V$ be the complement of the union of these subspaces for all associated primes of $(0)$, so that $V$ is a non-empty Zariski open subset of $k^N$ such that if
$(\overline{a}_I)_{I \subseteq [n],|I|=r} \in V$, then, $\sum_I a_Iz_I$ is a non-zero-divisor.

Now consider the map $$M_{r \times n}(k) \rightarrow k^N: A \mapsto (A_I)_{I \subseteq [n],|I|=r}.$$
The image of this map necessarily intersects $V$, so that the preimage $U$ of $V$ is a non-empty Zariski open subset of $M_{r \times n}(k)$.  But by Cauchy-Binet,
\[
\pushQED{\qed} 
det([m_1 ~ \cdots ~ m_n]A) = \sum_I z_IA_I. \qedhere
\]     
\end{proof}

\begin{theorem}\label{thm:properties} With the notation of Theorem \ref{thm:equiv}, suppose that $q=d$ and that all $M_k \subsetneq F_k$. Then, each $det(B_k)$ is part of a minimal generating set of $I_k$ and minimal generating sets of $B_k$ can be extended to those of $M_k$. If $q\geq d$ and $R$ has positive depth, then each $B_k$ can be chosen to be a free submodule of $F_k$.
\end{theorem}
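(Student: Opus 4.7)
The plan is to treat the two assertions in turn, both relying on the determinantal form of joint reductions (Theorem \ref{thm:equiv}(3)).

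For the first assertion, Theorem \ref{thm:equiv}(3) gives that $(\det(B_1),\ldots,\det(B_d))$ is a joint reduction of the $\m$-primary ideals $(I_1,\ldots,I_d)$ in Rees's sense. I would first show that each $\det(B_k)$ is part of a minimal generating set of $I_k$, i.e.\ $\det(B_k)\notin \m I_k$. Assuming otherwise, say $\det(B_1)\in \m I_1$, substituting into the joint reduction equation gives
\[
I_1^{n+1}\cdots I_d^{n+1}\subseteq \m I_1^{n+1}\cdots I_d^{n+1} + \sum_{k=2}^{d}\det(B_k)\,I_1^{n+1}\cdots I_k^{n}\cdots I_d^{n+1},
\]
and Nakayama absorbs the $\m$-term. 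Reading off the resulting equation in the fiber cone $S(I_1\oplus\cdots\oplus I_d)/\m S(I_1\oplus\cdots\oplus I_d)$ of the Kirby-Rees algebra, we find that the images of $\det(B_2),\ldots,\det(B_d)$ already generate an irrelevant ideal there. Via the Kirby-Rees identification of the analytic spread as the minimum size of a joint reduction, this forces $a(S(I_1\oplus\cdots\oplus I_d))\leq d-1$, contradicting Proposition \ref{prop:ansp} (which, applied with $F_k=R$ and $r_k=1$ for all $k$, gives $a=d$).

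Having established $\det(B_k)\notin \m I_k$, I would deduce the extension statement via Cauchy-Binet. Write the generators of $B_k$ as the columns of $\tilde M_k A_k$, where $\tilde M_k$ is the $r_k\times m_k$ matrix of generators of $M_k$ in a basis of $F_k$ and $A_k\in M_{m_k\times r_k}(R)$. Cauchy-Binet gives $\det(B_k)=\sum_{I}z_{k,I}\,(A_k)_I$, where $z_{k,I}$ are the $r_k$-minors of $\tilde M_k$ that generate $I_k$ and $(A_k)_I$ are the $r_k$-minors of $A_k$. Since $\det(B_k)\notin \m I_k$, some $(A_k)_I$ must be a unit, so $\overline{A_k}$ has rank $r_k$ over $k$, i.e.\ its columns are $k$-linearly independent. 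Translated back, the given generators of $B_k$ are linearly independent modulo $\m M_k$, and so extend to a minimal generating set of $M_k$.

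For the second claim, I would combine Proposition \ref{prop:exist} with the generic-nonzerodivisor proposition stated just above Theorem \ref{thm:properties}. For each $k=1,\ldots,q$, that proposition (whose hypotheses use the positive depth of $R$) produces a nonempty Zariski open $U_k\subseteq M_{m_k\times r_k}(k)$ on which $\det(\tilde M_k A_k)$ is a nonzerodivisor. Intersecting these with the nonempty Zariski open from Proposition \ref{prop:exist} and using that the residue field is infinite, we obtain a nonempty common refinement; any point in it gives a joint reduction $(B_1,\ldots,B_q)$ with every $\det(B_k)$ a nonzerodivisor. For such a $B_k$, let $C_k$ be the $r_k\times r_k$ matrix expressing the generators of $B_k$ in a basis of $F_k$. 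The classical adjoint identity $\mathrm{adj}(C_k)\cdot C_k=\det(C_k)\cdot \mathrm{Id}$ combined with $\det(C_k)$ being a nonzerodivisor shows that the $R$-linear map $R^{r_k}\to F_k$ with matrix $C_k$ is injective. Since it factors as $R^{r_k}\twoheadrightarrow B_k\hookrightarrow F_k$, the surjection is an isomorphism, and $B_k$ is free of rank $r_k$.

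The main obstacle is the argument in the first paragraph: one must carefully set up Nakayama on the multigraded piece and then invoke the sharp form of the Kirby-Rees analytic-spread inequality that rules out joint reductions of total size below the analytic spread. The remaining steps are essentially bookkeeping once these ingredients are in place.
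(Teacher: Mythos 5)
Your proof is correct and reaches the same conclusions, but in places takes a recognisably different (and in one place, arguably cleaner) route than the paper.

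For the assertion that $\det(B_k)\notin\m I_k$, you and the paper both begin by substituting $\det(B_1)\in\m I_1$ into the joint reduction equation and invoking Nakayama. After that the paths diverge: you pass to the fiber cone of the multi-Rees algebra, observe that $\det(B_2),\ldots,\det(B_d)$ already generate an irrelevant ideal there, and then invoke a ``minimum size of a joint reduction equals analytic spread'' result to contradict $a(S(I_1\oplus\cdots\oplus I_d))=d$. The paper, by contrast, simply notes that after Nakayama one has $(I_1\cdots I_d)^{n+1}\subseteq(\det(B_2),\ldots,\det(B_d))$, so an $\m$-primary ideal would be generated by $d-1$ elements, contradicting Krull's altitude theorem. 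These arguments are morally the same, but be careful: what the paper extracts from Kirby--Rees (Theorem 1.6) is only the \emph{existence} of joint reductions when $|a|\geq a(G)$, not the converse lower bound $|a|\geq a(G)$ that your argument requires. That lower bound is true and standard, but if you wish to use it you should either locate the precise statement in Kirby--Rees or supply a short proof --- and the shortest proof is exactly the Krull-altitude argument, so the detour through analytic spread does not actually save any work.

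For the extension of minimal generating sets, your Cauchy--Binet argument is a genuine and rather attractive alternative. The paper argues by column operations: if the generators of $B_k$ were dependent modulo $\m M_k$, one generator could be replaced by an element of $\m M_k$ without changing $B_k$, and expanding the determinant along that column then puts a unit multiple of $\det(B_k)$ into $\m I_k$. Your argument instead reads off the conclusion directly from the identity $\det(B_k)=\sum_I z_{k,I}(A_k)_I$: non-membership in $\m I_k$ forces some minor $(A_k)_I$ to be a unit, hence $\overline{A_k}$ has full column rank, hence the generators of $B_k$ are independent modulo $\m M_k$. This is more transparent and avoids the slightly awkward replace-a-column maneuver. (One small point to make explicit: the argument requires that the columns of $\tilde M_k$ form a \emph{minimal} generating set of $M_k$, so that their images are a $k$-basis of $M_k/\m M_k$.)

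For the freeness statement when $\mathrm{depth}(R)>0$, your argument coincides with the paper's: intersect the Zariski-open from Proposition~\ref{prop:exist} with the Zariski-opens from the generic-nonzerodivisor proposition, and conclude freeness from injectivity of the map with matrix $C_k$. This part is essentially identical.
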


\begin{proof}
 If each $M_k$ is a proper submodule of $F_k$, then each ideal $I_k$ is ${\mathfrak m}$-primary. Whenever $x_1,\cdots,x_d$ is a joint reduction of ${\mathfrak m}$-primary ideals $(I_1,\cdots,I_d)$, each $x_k$ is necessarily a minimal generator of $I_k$, else the ${\mathfrak m}$-primary ideal $I_1\cdots I_d$ would be contained in an ideal generated by less that $d$ elements by Nakayama's lemma. Hence each $det(B_k)$ is part of a minimal generating set of $I_k$.
It follows that $det(B_k) \neq 0$. If an $R$-linear combination of elements of $B_k$ is in ${\mathfrak m}M_k$, and at least one of the coefficients is a unit, then that element may be replaced by the element of ${\mathfrak m}M_k$ (without changing $B_k$) at the expense of changing $det(B_k)$ by a multiplicative unit. However now, after this change, $det(B_k)$ is in ${\mathfrak m}I_k$, and therefore, not part of a minimal generating set of $I_k$, which is a contradiction. Thus a minimal generating set of $B_k$ can be extended to one of $M_i$.

\medskip
Now suppose $R$ has positive depth. We may assume that $M_k \subsetneq F_k$. We will need to appeal to the proof of Theorem 1.6  of Section~1 of \cite{KrbRes1994}. Applied to the ${\mathbb N}^q$-graded algebra $S(M)$ where $M = M_1 \oplus \cdots \oplus M_q$, this theorem proves the following: Suppose that $M_k$ is $n_k$-generated and identify $M_k$ with the $r_k \times n_k$ matrix whose columns are these generators. Choose generic matrices $A_1,\cdots,A_q$  of sizes $n_1 \times r_1,\cdots,n_q \times r_q$ - each entry of these is a distinct indeterminate. Consider the product matrices $M_kA_k$ which are square of sizes $r_k \times r_k$. There exists a non-empty Zariski open subset, say $W$,  of $M_{n_1 \times r_1}(k) \times \cdots \times M_{n_1 \times r_1}(k)$ such that 
if $(\overline{A}_1,\cdots,\overline{A}_q) \in W$, then, $B_k = M_kA_k$ are such that $(B_1,\cdots,B_q)$  is a joint reduction of $(M_1,\cdots,M_q)$. 

On the other hand, for each $k$, there is a non-empty Zariski open subset $U_k$ of $M_{n_k \times r_k}(k)$ such that if $\overline{A}_k \in U_k$ then the determinant of $B_k = M_kA_k$ is a non-zero-divisor in $R$. Thus $B_k$ is a free submodule of $M_k$.

So for any element $(\overline{A}_1,\cdots,\overline{A}_q) \in W \cap (U_1 \times \cdots \times U_q)$, the collection $(B_1,\cdots,B_q)$  is a joint reduction of $(M_1,\cdots,M_q)$ and each $B_k$ is free, as desired.
\end{proof}

\section{Reduction-number-zero for two-dimensional regular local rings}

We  use the ideas of  \S1 to prove a reduction-number-zero theorem for integrally closed modules over two-dimensional regular local rings. Throughout this section  $(R,{\mathfrak m},k)$ is a two-dimensional regular local ring with infinite residue field. 
For an ideal $I$ of $R$, the largest power of ${\mathfrak m}$ containing $I$ is denoted by $ord(I)$ and for a torsion-free module $M$, set $ord(M) = ord(I(M))$ where $I(M)$ is the ideal of maximal minors of (a matrix whose columns generate) $M \subseteq F$ where $F=M^{**}$. We will use $\nu(\cdot)$ to denote the minimal number of generators function. A  necessary and sufficient condition for a torsion-free module $M$ to be contracted from an overring of $R$ of the form $R[\frac{\mathfrak m}{x}]$ for a minimal generator $x \in {\mathfrak m}$ is that
$\nu(M) = ord(M)+rank(M)$, while in general, $\nu(M) \leq ord(M)+rank(M)$. In particular, the equality holds for integrally closed modules. For such basic properties of integrally closed modules over two-dimensional regular local rings, we refer to \cite{Kdy1995}.

\begin{theorem}\label{thm:jtred0} 
Let $M_1 \subseteq F_1$ and $M_2 \subseteq F_2$  be integrally closed $R$-modules of finite colength in free $R$-modules of rank $r_1$ and $r_2$ respectively
and let $(B_1,B_2)$ be a joint reduction of $(M_1,M_2)$. Then, as submodules of $S(M)$,
$$
M_1M_2 = B_1M_2+M_1B_2.
$$
Equivalently, the joint reduction number of $(M_1,M_2)$ with respect to $(B_1,B_2)$ is 0. 
\end{theorem}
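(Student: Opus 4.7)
The plan is to combine the determinantal characterisation of joint reductions (Theorem~\ref{thm:equiv}) with the freeness statement of Theorem~\ref{thm:properties} to reduce to Rees's joint-reduction-number-zero theorem for integrally closed ideals, and then to lift the resulting ideal equality back to modules. Since $R$ has depth $2>0$, Theorem~\ref{thm:properties} allows me to assume without loss that $B_1,B_2$ are free $R$-submodules of $M_1,M_2$ of ranks $r_1,r_2$; set $d_i=\det(B_i)\in R$, a non-zero-divisor lying in the $\mathfrak{m}$-primary Fitting ideal $I_i=I(M_i)$. By Theorem~\ref{thm:equiv}(3), $(d_1,d_2)$ is a joint reduction of $(I_1,I_2)$. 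Kodiyalam's work \cite{Kdy1995} shows that each $I_i$ is an integrally closed ideal, so by Zariski's product theorem $I_1I_2$ is integrally closed too, giving $\overline{e}_2(I_1I_2) = \overline{e}_2(I_1) + \overline{e}_2(I_2) = 0$. Rees's Theorem~\ref{Rees} then yields the ideal identity $I_1I_2 = d_1 I_2 + d_2 I_1$.

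To lift this to the module level, I apply Cramer's rule to each free submodule $B_i\subseteq F_i$ of full rank to obtain $d_iF_i\subseteq B_i$, whence $(d_1,d_2)\cdot M_1M_2 \subseteq B_1M_2 + M_1B_2$. The quotient $Q := M_1M_2/(B_1M_2+M_1B_2)$ is therefore annihilated by the $\mathfrak{m}$-primary ideal $(d_1,d_2)$. Moreover, the valuative criterion (Theorem~\ref{thm:equiv}(2)) applied at each height-one prime $P$ of $R$ forces $(B_i)_P = (F_i)_P = (M_i)_P$ for at least one $i$ (since $I_i\not\subseteq P$, the valuative equality $B_iR_P = M_iR_P$ at $V=R_P$ says $d_i$ is a unit in $R_P$), so $Q$ is supported only at $\mathfrak{m}$ and has finite length.

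The main obstacle is to deduce that this finite length is zero. For this I invoke Kodiyalam's theorem \cite{Kdy1995} that $M_1M_2$ is itself an integrally closed module in $F_1\otimes F_2$, together with his characterisation $\nu(N) = \mathrm{ord}(I(N)) + \mathrm{rank}(N)$ of integral closedness; combined with the Fitting-ideal identity $I(M_1M_2) = I_1^{r_2}I_2^{r_1}$, this pins down $\nu(M_1M_2) = r_2\,\mathrm{ord}(I_1) + r_1\,\mathrm{ord}(I_2) + r_1r_2$. A direct generator count exploiting the freeness of $B_1,B_2$, together with the identification $B_1M_2 \cap M_1B_2 = B_1B_2$ (which contributes precisely the $r_1r_2$ diagonal redundancies in the sum), matches $\nu(B_1M_2+M_1B_2)$ to this same value. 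Since Theorem~\ref{thm:properties} also ensures minimal generating sets of the $B_i$ extend to minimal generating sets of the $M_i$, the induced map $(B_1M_2+M_1B_2)/\mathfrak{m}(\cdot) \to M_1M_2/\mathfrak{m}M_1M_2$ is surjective, and Nakayama's lemma yields $M_1M_2 = B_1M_2 + M_1B_2$.
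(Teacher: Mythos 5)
Your final step---the appeal to Nakayama's lemma---is a non sequitur and this is where the proof breaks down. Knowing that minimal generating sets of $B_1,B_2$ extend to minimal generating sets of $M_1,M_2$ does \emph{not} imply that the natural map
$(B_1M_2+M_1B_2)/\mathfrak{m}(B_1M_2+M_1B_2) \to M_1M_2/\mathfrak{m}M_1M_2$
is surjective. Concretely, if $L_1,\dots,L_{r_1},L_{r_1+1},\dots,L_{r_1+n_1}$ is a minimal generating set of $M_1$ whose first $r_1$ elements span $B_1$, and similarly $H_1,\dots,H_{r_2+n_2}$ for $M_2$, then $B_1M_2+M_1B_2$ is generated by the products $L_iH_j$ with $i\le r_1$ or $j\le r_2$, while $M_1M_2$ requires in addition the products $L_iH_j$ with $i>r_1$ \emph{and} $j>r_2$. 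There is no automatic reason for these extra $n_1n_2$ products to lie in $(B_1M_2+M_1B_2)+\mathfrak{m}M_1M_2$; indeed, if this held for formal reasons the whole theorem would follow immediately and none of the preceding preparation (yours or the paper's) would be needed. The same objection applies to the idea that equality of $\nu(B_1M_2+M_1B_2)$ and $\nu(M_1M_2)$ forces the modules to coincide: two finitely generated modules $N\subsetneq M$ can perfectly well have $\nu(N)=\nu(M)$ (e.g.\ $(x^2,xy)\subsetneq(x^2,y)$ in $k[[x,y]]$, both $2$-generated).

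What is actually required is a further argument to close the gap. In the paper's own proof, the three auxiliary facts established are: (1) $B_1M_2+M_1B_2$ has finite colength in $F_1F_2$; (2) $\nu(B_1M_2+M_1B_2)=r_1r_2+r_1n_2+n_1r_2$; and crucially (3) $\mathrm{ord}(B_1M_2+M_1B_2)\le r_1n_2+n_1r_2$. Your proposal supplies (1) (via the $\mathfrak{m}$-primary annihilator $(d_1,d_2)$, a pleasant alternative to the paper's localisation argument), gestures at (2), but omits (3) entirely. It is (2) and (3) together, via the numerical characterisation $\nu=\mathrm{ord}+\mathrm{rank}$, that establish $B_1M_2+M_1B_2$ is \emph{contracted}; and even once contractedness is known, equality with $M_1M_2$ is obtained only by passing to a first quadratic transform and inducting on the Buchsbaum-Rim multiplicity (using that integrally closed modules transform to integrally closed modules of strictly smaller Buchsbaum-Rim multiplicity). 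This inductive descent through quadratic transforms is the engine of the proof, and it is absent from your proposal. Incidentally, the detour through Rees's Theorem~\ref{Rees} and the ideal identity $I_1I_2=d_1I_2+d_2I_1$ ends up doing no work in your argument as written, since the lifting step that was meant to exploit it is the very step that fails.
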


\begin{proof} We first dispose of the case that either $M_1=F_1$ or $M_2=F_2$, in which case either $I_1 = I(M_1) = R$ or $I_2 = I(M_2) = R$. Without loss of generality, assume that $M_1=F_1$ so that $I_1 =R$. Then, Theorem \ref{thm:equiv} shows that $(det(B_1),det(B_2))$ is a joint reduction of $(R,I_2)$ or equivalently, $det(B_2)R+det(B_1)I_2$ is a reduction of $I_1I_2 = I_2$. If $det(B_1)$ is a non-unit, then $det(B_2)R$ is a reduction of $I_2$. Thus $I_2=R$ and $det(B_2)$ is a unit.  So $det(B_1)$ or $det(B_2)$ is necessarily a unit which implies that either $B_1 = (M_1 =)\ F_1$ or $B_2 = (M_2 =)\ F_2$. Either of these conditions implies that $M_1M_2 = B_1M_2+M_1B_2$, as needed.

We will now treat the case that $M_1$ and $M_2$ are proper submodules of $F_1$ and $F_2$ respectively. Suppose that $ord(M_1) = n_1$ and $ord(M_2) = n_2$. 
Then Theorem~\ref{thm:properties} shows that $B_1$ and $B_2$ are free submodules of $M_1$ and $M_2$ of ranks $r_1$ and $r_2$ respectively (since $det(B_1)$ and $det (B_2)$ are non-zero), minimal generating sets of which extend to those of $M_1$ and $M_2$. Further, for 
 some $n \in {\mathbb N}$, the joint reduction equation 
\begin{eqnarray*}
S_n(M_1)S_n(M_2) &=& B_1S_{n-1}(M_1)S_n(M_2) + S_n(M_1)B_2S_{n-1}(M_2),
\end{eqnarray*}
holds in $S(M)$ where $M=M_1 \oplus M_2$. Then, taking $V$ to be the order valuation ring of $R$ in Theorem \ref{thm:equiv}(2), either $ord(det(B_1)) = n_1$ or $ord(det(B_2)) = n_2$.

We  now prove the following statements. 
\begin{enumerate}
\item $B_1M_2+M_1B_2$ is of finite colength in $F_1F_2$ and in particular is of rank  $r_1r_2$. \item $\nu(B_1M_2+M_1B_2) = r_1r_2+r_1n_2+n_1r_2$.
\item $ord(B_1M_2+M_1B_2) \leq r_1n_2+n_1r_2$.
\end{enumerate}

{\it Proof of (1).} It suffices to show that if $P$ is a height one prime ideal of $R$ and $V$ is the discrete valuation ring $R_P$, then
$$
(B_1M_2+M_1B_2)_P = (F_1F_2)_P.
$$
Localising the joint reduction equation at $P$ and,  again, abusing notation by using the same symbols to denote the modules extended to $V$,  note that $M_1=F_1$  and $M_2=F_2$ by the finite length assumption. The joint reduction equation then becomes
\begin{eqnarray*}
S_n(F_1)S_n(F_2) &=& B_1S_{n-1}(F_1)S_n(F_2) + B_2S_n(F_1)S_{n-1}(F_2),
\end{eqnarray*}
which says that  $(B_1,B_2)$  is a joint reduction of $(F_1,F_2)$ over $V$.  By Theorem~\ref{thm:equiv}(2), either $B_1 = F_1$ or $B_2=F_2$. This implies that
$$
B_1F_2+F_1B_2= F_1F_2,
$$
as was to be seen.

{\it Proof of (2).}
Extend $B_1$ and $B_2$ to minimal generating sets of $M_1$ and $M_2$ respectively and regard all these generators as linear forms in some chosen bases $\{T_{11},\cdots,T_{1r_1}\}$ for $F_1$ and $\{T_{21},\cdots,T_{2r_2}\}$ for $F_2$. Suppose we get linear forms $G_1,\cdots,G_{r_1+n_1}$ for $M_1$ and 
$H_1,\cdots,H_{r_2+n_2}$ for $M_2$ where the first $r_1$ of the $G$'s are a basis of $B_1$ and the first $r_2$ of the $H$'s are a basis of  $B_2$.

Then, $B_1M_2+M_1B_2$ is generated by the products $G_jH_l$ where either $1 \leq j \leq r_1$ and $1 \leq l \leq r_2+n_2$ or $r_1+1 \leq j \leq r_1+n_1$ and $1 \leq l \leq r_2$ - a total of $r_1r_2+r_1n_2+n_1r_2$ generators. We show that these form a minimal set of generators.
Suppose that for some $\alpha_{jl},\beta_{jl} \in R$, we have
$$
\sum_{1 \leq j \leq r_1, 1 \leq l \leq r_2+n_2} \alpha_{jl} G_jH_l = \sum_{1 \leq j \leq n_1, 1 \leq l \leq r_2} \beta_{jl}G_{r_1+j}H_l.
$$
We need to see that all of $\alpha_{jl},\beta_{jl}$ are in ${\mathfrak m}$. This is equivalent to $r_1r_2$ equations - one each for the coefficient of $T_{1i}T_{2k}$ for $1 \leq i \leq r_1, 1 \leq k \leq r_2$.
Writing $G_j = \sum_{i=1}^{r_1} T_{1i}B_1(i,j)$ and $H_l = \sum_{k=1}^{r_2} T_{2k}B_2(k,l)$, 
the previous equation reads:
\begin{eqnarray*}
\sum_{1 \leq i,j \leq r_1, 1 \leq l \leq r_2+n_2,1 \leq k \leq r_2} \alpha_{jl} T_{1i}B_1(i,j)T_{2k}M_2(k,l) = \\
\sum_{1 \leq i \leq r_1,1 \leq j \leq n_1, 1 \leq k,l \leq r_2} \beta_{jl}T_{1i}M_1(i,r_1+j)T_{2k}B_2(k,l).
\end{eqnarray*}
Equating the coefficients of $T_{1i}T_{2k}$ on both sides gives:
\begin{eqnarray*}
\sum_{1 \leq j \leq r_1, 1 \leq l \leq r_2+n_2} \alpha_{jl} B_1(i,j)M_2(k,l) = 
\sum_{1 \leq j \leq n_1, 1 \leq l \leq r_2} \beta_{jl}M_1(i,r_1+j)B_2(k,l).
\end{eqnarray*}

In matrix form, this is the equality
$$
B_1 \alpha M_2^T = M_1^\prime \beta B_2^T
$$
of $r_1 \times r_2$ matrices where $\alpha$ is a matrix of size $r_1 \times (r_2+n_2)$ and $\beta$ is a matrix of size $n_1 \times r_2$. 
The notation $M_1^\prime$ is for the submatrix of $M_1$ of the last $n_1$ columns.

By left multiplication by $adj(B_1)$ and right multiplication by $adj(B_2)^T$, we get the equation
$$
det(B_1) \alpha M_2^T adj(B_2)^T = adj(B_1) M_1^\prime \beta det(B_2).
$$
Since $(det(B_1),det(B_2))$  is a joint reduction of $(I_1,I_2)$ and both $I_1,I_2$ are ${\mathfrak m}$-primary ideals, they form a regular sequence in $R$. Hence there exists a $r_1 \times r_2$ matrix $C$ such that
\begin{eqnarray*}
\alpha M_2^T adj(B_2)^T &=& C det(B_2), {\text {~and}}\\
adj(B_1) M_1^\prime \beta&=& det(B_1) C.
\end{eqnarray*}
and therefore also such that
\begin{eqnarray*}
\alpha M_2^T  &=& C B_2^T \Rightarrow M_2 \alpha = B_2 C^T, {\text {~and}}\\
 M_1^\prime \beta &=& B_1 C.
\end{eqnarray*}

Since the columns of $B_1$ and the columns of $M_1^\prime$ together form a minimal generating set for $M_1$, the second of the equations above implies that all entries of $\beta$ and $C$ are in ${\mathfrak m}$. Then using that the columns of $M_2$ form a minimal generating set for $M_2$ (and that the entries of $C$ are in ${\mathfrak m}$), the first of the equations above implies that all entries of $\alpha$ are also in ${\mathfrak m}$, just as needed.

{\it Proof of (3).}
Since $B_1M_2+M_1B_2$ is of finite colength in the free module $F_1F_2$ of rank $r_1r_2$ by (1), $ord(B_1M_2+M_1B_2)$ is the order of the ideal $I_{r_1r_2}(B_1M_2+M_1B_2)$. This ideal contains the ideals $I_{r_1r_2}(B_1M_2)$ and $I_{r_1r_2}(M_1B_2)$. So it suffices to see that at least one of these has order at most $r_1n_2+n_1r_2$. Since either $ord(det(B_1)) = n_1$ or $ord(det(B_2)) = n_2$, say $ord(det(B_1)) = n_1$ without loss of generality.

We prove that $I_{r_1r_2}(B_1M_2)$ contains an element of order $r_1n_2+n_1r_2$. Simply pick $r_2$ columns of $M_2$, such that if $N_2$ is the square matrix they form, then $det(N_2)$ has order $n_2$. Now $det(B_1 \otimes N_2) = det(B_1)^{r_2}det(N_2)^{r_1}$ is in $I_{r_1r_2}(B_1M_2)$ and has the required order.

We now complete the proof. Given (1)-(3), it follows immediately by the numerical characterisation of contracted modules that $B_1M_2+M_1B_2$ is contracted, and so to see that it equals $M_1M_2$, it suffices to see this after extending to a first quadratic transform of $R$. Since the extension of an integrally closed module to a first quadratic transform of $R$ is also integrally closed (Proposition 4.6 of \cite{Kdy1995}) and is either free or has strictly smaller Buchsbaum-Rim multiplicity (Theorem 4.8 of \cite{Kdy1995}) we may assume by induction on the Buchsbaum-Rim multiplicity that one of $M_1,M_2$ is actually free. But this is the case we treated at the outset.
\end{proof}

\begin{remark}
There is a version of Theorem \ref{thm:jtred0} that is valid for more than two integrally closed modules. For $q \geq 2$, suppose that $M_1 \subseteq F_1$, $\cdots$, $M_q \subseteq F_q$  are integrally closed $R$-modules of finite colength in free $R$-modules $F_1,\cdots,F_q$ of ranks $r_1, \cdots,r_q$  respectively.
Let $(B_1,\cdots,B_q)$ be a joint reduction of $(M_1,\cdots,M_q)$ such that for each subset $K$ of $[q]$ with cardinality at least 2, the collection $\{B_k:k \in K\}$ is a joint reduction of $\{M_k:k \in K\}$. The existence of such a joint reduction follows easily from the Zariski openness condition stated in Proposition \ref{prop:exist}.
Then, 
\begin{eqnarray*}
M_1 \cdots M_q &=& B_1 \cdots B_{q-1}M_q+ \cdots +M_1 B_2 \cdots B_q.
\end{eqnarray*}
This follows using Theorem \ref{thm:jtred0} and induction. It would be interesting to decide if the same conclusion holds assuming only that $(B_1,\cdots,B_q)$ is a joint reduction of $(M_1,\cdots,M_q)$.
\end{remark}

\section{Mixed Buchsbaum-Rim multiplicity of modules and Koszul complexes}

Throughout this section $(R,{\mathfrak m},k)$ will be a Noetherian local ring of positive dimension $d$ with infinite residue field. Let $F_k$ be a free $R$-module of rank $r_k$ and $M_k \subseteq F_k$ be modules of finite colength for $k=1,2,\cdots,d$. Our goal in this section is to define the mixed Buchsbaum-Rim multiplicity $br(M_1|\cdots|M_d)$ of this collection of modules and to express it as the Euler-Poincar\'{e} characteristic of a certain Koszul complex. To elaborate, it is known that for $n_1, \ldots, n_q$ sufficiently large, the function giving the length of $S_{n_1}(F_1)\cdots S_{n_q}(F_q)/S_{n_1}(M_1)\cdots S_{n_q}(M_q)$ assumes the values of a polynomial - the joint Buchsbaum-Rim polynomial - with rational coefficients whose total degree is $d+r_1+\cdots +r_q-q$. (See \S7 for a relevant discussion.) When $q = d$, the current case of interest, the total degree of the joint Buchsbaum-Rim polynomial $p(n_1, \ldots, n_d)$ is $r_1+\cdots +r_d$ and we focus on the normalised coefficient of the monomial $n_1^{r_1}\cdots n_d^{r_d}$ term in this polynomial.

\begin{definition}
With notation above, $r_1!\cdots r_d!$ times the coefficient of $n_1^{r_1} \cdots n_d^{r_d}$ in the joint Buchsbaum-Rim polynomial $p(n_1,\cdots,n_d)$  
is called the mixed Buchsbaum-Rim multiplicity of $M_1,\cdots,M_d$ and is denoted by $br(M_1|\cdots|M_d)$.
\end{definition}

As mentioned in the introduction, several authors have considered various versions of mixed Buchsbaum-Rim multiplicities for modules, since the coefficients of the leading form of the joint Buchsbaum-Rim polynomial of any collection of finite colength modules are of considerable interest. Our focus on $br(M_1|\cdots |M_d)$ derives from its relation to our definition of joint reduction. Note that when $M_1 = I_1, \ldots, M_d = I_d$ are $\m$-primary ideals, then $br(I_1|\cdots |I_d)$ is the mixed multiplicity $e(I_1|\cdots |_d)$ of $I_1, \ldots, I_d$ as defined by Rees in \cite{Res1984}. Rees shows that if $x_1, \ldots, x_d$ is a joint reduction of $I_1, \ldots, I_d$, then $br(I_1|\cdots |I_d)$ equals the multiplicity of the ideal $(x_1, \ldots, x_d)$, which in turn equals the Euler-Poincar\'{e} characteristic of the Koszul complex on $x_1, \ldots, x_d$. 

We now describe a complex of free $R$-modules constructed from a joint reduction of $(M_1,\cdots,M_d)$ whose Euler-Poincar\'{e} characteristic computes $br(M_1|\cdots|M_d)$. Let $(B_1,\cdots,B_d)$ be a joint reduction of $(M_1,\cdots,M_d)$. Let 
$\phi_k: F_k \rightarrow F_k$ be an endomorphism, for  $k=1,2,\cdots,d$ such that $im(\phi_k) = B_k$. 
 We will regard this as a complex of free $R$-modules denoted $K_{\bullet}(\phi_k)$:
 \begin{center}
\begin{tikzcd}
 0  \arrow[r] &  F_k   \arrow[r,"\phi_k"]  & F_k   \arrow[r]  & 0,
\end{tikzcd}
\end{center}
for $k=1,2,\cdots,d$. We set $K_{\bullet}(\phi_1, \ldots, \phi_d)= K_{\bullet}(\phi_1)\otimes \cdots \otimes K_{\bullet}(\phi_d)$. It follows from Theorem \ref{thm:comparison} below, that the homology modules in $K_{\bullet}(\phi_1, \ldots, \phi_d)$ have finite length if and only if $det(\phi _1), \ldots, det(\phi_d)$ generate an $\m$-primary ideal, and this latter condition holds since $B_1, \ldots B_d$ is a joint reduction of $M_1, \ldots, M_d$ (see Theorem \ref{thm:equiv}).

The main result of this section is the following theorem which is a version for modules of Rees's theorem mentioned above. The proof that we give below seems to be new even in the case of ideals.

\begin{theorem}\label{thm:brmultepchar} With notation as above, assume that $(B_1, \ldots, B_d)$ is a joint reduction of $(M_1, \ldots, M_d)$ and each $M_k \subsetneq F_k$. Then,
$$
br(M_1|\cdots|M_d) = \chi(K_{\bullet}(\phi_1,\cdots,\phi_d)).
$$
\end{theorem}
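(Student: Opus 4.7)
The plan is to prove the theorem by induction on $d$, using reduction modulo $a := \det(\phi_d)$ at each step. As a preliminary, using Proposition~\ref{prop:reductiontodomain} (pass to each minimal prime) together with Theorem~\ref{thm:properties} (free $B_k$'s in positive depth), I would first reduce to the case in which every $\det(\phi_k)$ is a non-zero-divisor in $R$. The base case $d=1$ is the classical Buchsbaum--Rim theorem: $\ker\phi_1 = 0$ gives $\chi(K_\bullet(\phi_1)) = \ell(F_1/B_1) = br(M_1)$, the latter equality being exactly the statement that a minimal reduction of $M_1$ realises the Buchsbaum--Rim multiplicity.

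For the inductive step, set $R' = R/aR$ (of dimension $d-1$) and write $\overline{(\cdot)}$ for reduction modulo $a$. Three intermediate claims need to be verified. First, $(\overline{B}_1, \ldots, \overline{B}_{d-1})$ is a joint reduction of $(\overline{M}_1, \ldots, \overline{M}_{d-1})$ in $R'$: via Theorem~\ref{thm:equiv}, this reduces to the classical fact that reducing modulo one member of a joint reduction of $\mathfrak{m}$-primary ideals produces a joint reduction of the images. Second, the Euler characteristics should match,
\[
\chi(K_\bullet(\phi_1,\ldots,\phi_d);R) = \chi(K_\bullet(\overline{\phi}_1,\ldots,\overline{\phi}_{d-1});R').
\]
Since $a$ is regular, $K_\bullet(\phi_d)$ is a free resolution of $F_d/B_d$; writing $K^{d-1} = K_\bullet(\phi_1,\ldots,\phi_{d-1})$, a K\"unneth spectral-sequence collapse yields $\chi(K_\bullet(\phi_1,\ldots,\phi_d)) = \chi(K^{d-1} \otimes_R F_d/B_d)$. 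Cayley--Hamilton gives $aF_d \subseteq B_d$, so $F_d/B_d$ is in fact an $R'$-module. Using Smith normal form at each codimension-one prime $P \supseteq aR$ of $R$, one shows $\ell_{R_P}((F_d/B_d)_P) = \mathrm{ord}_P(a) = \ell_{R_P}((R/aR)_P)$, and a d\'evissage shows that the classes $[F_d/B_d]$ and $[R']$ agree in the Grothendieck group of $R'$-modules modulo classes of smaller support. Invoking a Serre-type vanishing then finishes the identification. Third, one must establish the associativity $br(M_1|\cdots|M_d;R) = br(\overline{M}_1|\cdots|\overline{M}_{d-1};R')$, to be proved by direct analysis of the leading coefficient of the joint Buchsbaum--Rim polynomial against the short exact sequence $0 \to R \xrightarrow{a} R \to R' \to 0$, exploiting $aF_d \subseteq B_d \subseteq M_d$ to control top-degree contributions. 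Combining the three claims with the inductive hypothesis closes the argument.

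The main obstacle is the K-theoretic vanishing in the second claim: one must show $\chi(K^{d-1} \otimes_R N) = 0$ whenever $N$ is a finitely generated $R'$-module whose support has dimension strictly less than $d-1$. This is precisely the leverage that lets the Koszul Euler characteristic detect only the ``generic rank'' of modules like $F_d/B_d$, but a proof requires Serre-intersection-theoretic input that is delicate without CM or regularity assumptions on $R$; one approach is to reduce, by filtration by prime cyclic modules, to the comparison $\chi(K^{d-1} \otimes R'/P)$ for primes $P$ of $R'$ and to argue multiplicatively using the $\mathfrak{m}$-primary nature of the joint reduction. The third claim, while not conceptually difficult, also requires some care in the bookkeeping of multigraded Hilbert polynomials.
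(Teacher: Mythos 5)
Your proposed route is genuinely different from the paper's. The paper proves the theorem by introducing the multigraded Koszul complexes $K_{\bullet}(\mathcal{L};\mathcal{S})$ and $K_{\bullet}(\mathcal{L};\mathcal{R})$ over $\mathcal{S} = S(F)$ and $\mathcal{R} = S(M)$, showing (Propositions~\ref{prop:epphi}, \ref{prop:oneless}) that the Euler--Poincar\'e characteristic of the graded piece $K_{\bullet}(\mathcal{L};\mathcal{S})_{(n_1,\ldots,n_d)}$ is independent of $n_1,\ldots,n_d \geq 1$ and coincides with $\chi(K_{\bullet}(\phi_1,\ldots,\phi_d))$ at $(1,\ldots,1)$, and then (Proposition~\ref{prop:nlargemult}) computing the Euler characteristic of the quotient complex $K_{\bullet}(\mathcal{L};\mathcal{S}/\mathcal{R})$ in large degree via the difference operator $\Delta_1^{r_1}\cdots\Delta_d^{r_d}$ applied to the joint Buchsbaum--Rim function, which picks out exactly the normalized leading coefficient $br(M_1|\cdots|M_d)$. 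This argument is elementary and needs no intersection-theoretic input at all.

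Your induction-on-$d$-via-reduction-mod-$\det(\phi_d)$ scheme has two genuine gaps that you correctly identify but do not close. First, the Serre-type vanishing $\chi(K^{d-1} \otimes_R N) = 0$ for $N$ supported in dimension $< d-1$ is not routine here: $K^{d-1}$ is not a Koszul complex on a system of parameters of $R$ but a Koszul complex of $X_1,\ldots,X_{d-1}$ on the $R[X_1,\ldots,X_{d-1}]$-module $F_1\otimes\cdots\otimes F_{d-1}$, and the usual Serre vanishing (or the dévissage to prime cyclic modules plus a multiplicity count) does not directly apply without first proving something essentially equivalent to the paper's Theorem~\ref{thm:comparison}, which the paper establishes independently and only in the following section. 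Second, the ``associativity'' identity $br(M_1|\cdots|M_d;R) = br(\overline{M}_1|\cdots|\overline{M}_{d-1};R/\det(\phi_d))$ is a substantive module-theoretic analogue of Rees's associativity formula for mixed multiplicities; you give no argument for it, and its proof would require careful control of the joint Buchsbaum--Rim polynomial under specialization (including issues of non-vanishing of torsion when $R$ is not Cohen--Macaulay). Even your base case $d=1$, while true, invokes $\lambda(F_1/B_1) = br(M_1)$ under the implicit hypothesis that $\phi_1$ is injective, which over a general Noetherian local ring needs the Buchsbaum--Rim analogue of the Auslander--Buchsbaum--Serre multiplicity theorem rather than a mere length computation. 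In short, the skeleton is a plausible alternative strategy, but both intermediate claims that would carry the induction are themselves theorems on the order of difficulty of the result you are trying to prove, whereas the paper's multigraded difference-operator argument sidesteps all of this.
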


The strategy for the proof this theorem is to introduce auxiliary complexes $K_{\bullet} (\LL; \RR)$ and $K_{\bullet}(\LL; \s)$, defined as follows: For each $k=1,2,\cdots,d$, choose bases $T_{k1},\cdots,T_{kr_k}$ for $F_k$ and sets of generators ${\mathcal L}_k = \{L_{k1},\cdots,L_{kr_k}\}$ for $B_k$. Regard the $L_{ij}$ as linear forms in the $T_{ij}$  belonging to the ${\mathbb N}^d$-graded ring ${\mathcal R} = S(M) \subseteq S(F) =: {\mathcal S}$ 
where $M = M_1 \oplus \cdots \oplus M_d \subseteq F_1 \oplus \cdots \oplus F_d =F$. 
Let ${\mathcal L}$ denote the set ${\mathcal L}_1 \coprod \cdots \coprod {\mathcal L}_d$. Note that the elements of ${\mathcal L}_k$ are homogeneous of degree $e_k \in {\mathbb N}^d$. The proof is based on studying the inclusions of Koszul complexes of ${\mathcal L}$ over the ${\mathbb N}^d$-graded rings ${\mathcal R} \subseteq {\mathcal S}$
 and  the quotient complex. All these complexes are ${\mathbb N}^d$-graded and of length $r = r_1+\cdots+r_d$. We will show that, on the one hand, the Euler-Poincar\'{e} characteristic $\chi(K_{\bullet}(\phi_1,\cdots,\phi_d))$ equals 
the Euler-Poincar\'{e} characteristic of the complex of $R$-modules $K_{\bullet}(\LL; \s)_{n_1, \ldots, n_d}$, for $(n_1, \ldots, n_d) \geq (1,\ldots, 1)$, while on the other hand, the Euler-Poincar\'{e} characteristic of $K_{\bullet}(\LL; \s)_{n_1, \ldots, n_d}$
 equals $br (M_1|\cdots |M_d)$ for all $n_1, \ldots, n_d$ sufficiently large. 
 
 The proof of Theorem \ref{thm:brmultepchar} will follow from the three ensuing propositions.

 \begin{proposition}\label{prop:epphi} Let $(R,{\mathfrak m},k)$ be a Noetherian local ring of positive dimension $d$  and 
let \[B_1 \subseteq F_1, B_2 \subseteq F_2, \ldots,  B_d \subseteq F_d\] be 
$R$-submodules  of  free   $R$-modules $F_1,\cdots,F_d$ of ranks $r_1,\cdots,r_d$ such that $B_k$ is $r_k$-generated and such that $(det(B_1),\cdots,det(B_d))$ is an ideal of finite colength in $R$. For $1\leq k\leq d$, let $T_{k1},\cdots,T_{kr_k}$, $L_{k1},\cdots,L_{kr_k}$, ${\mathcal L}_k$, ${\mathcal L}$ and ${\mathcal S}$ be as above. Then the Euler-Poincar\'{e} characteristic of $K_{\bullet}({\mathcal L};{\mathcal S})_{(n_1,\cdots,n_d)}$ is finite and equals $\chi(K_{\bullet}(\phi_1,\cdots,\phi_d))$ for all $n_1,\cdots,n_d \geq 1$ (where $\phi_k$ is an endomorphism of $F_k$ with image $B_k$).
\end{proposition}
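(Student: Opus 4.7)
The plan is to reduce the computation of $\chi(K_\bullet(\LL;\s)_{(n_1,\ldots,n_d)})$ to the base case $\mathbf{n}=(1,\ldots,1)$, where the multigraded piece coincides on the nose with $K_\bullet(\phi_1,\ldots,\phi_d)$, and then to show that the Euler characteristic is invariant under the shift $\mathbf{n}\mapsto\mathbf{n}+\epsilon_k$ for each $k$ with $n_k\geq 1$. First, the $\mathbb{N}^d$-graded tensor-product identification $K_\bullet(\LL;\s) = \bigotimes_{k=1}^d K_\bullet(\LL_k; S(F_k))$ (tensor over $R$) gives $K_\bullet(\LL;\s)_{(\mathbf{n})} = \bigotimes_k C_k(n_k)$ with $C_k(n_k) := K_\bullet(\LL_k; S(F_k))_{n_k}$, a bounded complex of finitely generated free $R$-modules. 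For the finiteness assertion I would localise at any prime $P\not\supseteq \det\phi_k$: then $\phi_k$ becomes invertible over $R_P$, and the change of basis by $\phi_k^{-1}$ turns $\LL_k$ into a regular sequence of ``variables'' in $S(F_k)_P$, so the localised Koszul complex is a finite free resolution of $R_P$ concentrated in multi-degree zero. Hence $(C_k(n_k))_P$ is acyclic for $n_k\geq 1$, so $H_\bullet(C_k(n_k))$ is supported on $V(\det\phi_k)$, and the tensor product has homology supported on $V(\det\phi_1,\ldots,\det\phi_d)=\{\m\}$, of finite length.

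The base case $\mathbf{n}=(1,\ldots,1)$ is handled by direct inspection: $C_k(1)$ consists of $R^{r_k}$ in homological degree $1$ and $F_k = S_1(F_k)$ in degree $0$, with differential sending $e_j\mapsto L_{k,j}$, which is precisely the matrix of $\phi_k$. So $C_k(1) = K_\bullet(\phi_k)$, and taking the tensor product over $k$ identifies $K_\bullet(\LL;\s)_{(1,\ldots,1)}$ with $K_\bullet(\phi_1,\ldots,\phi_d)$ literally. It therefore suffices to prove that $\chi$ is constant on the cone $\mathbf{n}\geq(1,\ldots,1)$, and by symmetry this reduces to the one-step increment $\chi(K_\bullet(\LL;\s)_{(\mathbf{n}+\epsilon_k)}) = \chi(K_\bullet(\LL;\s)_{(\mathbf{n})})$.

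For this increment, I would apply the Koszul functor $K_\bullet(\LL;-)$ (exact on $\s$-modules since $K_\bullet(\LL;\s)$ is a complex of free $\s$-modules) to the short exact sequence
\[
0 \to \s(-\epsilon_k) \xrightarrow{\,L_{k,1}\,} \s \to \s/(L_{k,1}) \to 0,
\]
after reducing modulo minimal primes of $R$ via Proposition~\ref{prop:reductiontodomain} if needed to ensure $L_{k,1}$ is a non-zero-divisor in $\s$. Extracting the $(\mathbf{n}+\epsilon_k)$-th multigraded piece yields a short exact sequence of bounded complexes of finitely generated free $R$-modules, and additivity of the Euler-Poincar\'e characteristic reduces the invariance to the vanishing $\chi(K_\bullet(\LL;\s/(L_{k,1}))_{(\mathbf{n}+\epsilon_k)}) = 0$. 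Since $L_{k,1}$ acts as zero on $\s/(L_{k,1})$, the Koszul factor $K_\bullet(L_{k,1};\s/(L_{k,1}))$ has zero differential and splits, as a multigraded complex, into the direct sum of its two homological components; tensoring with $K_\bullet(\LL';\s/(L_{k,1}))$ for $\LL' := \LL\setminus\{L_{k,1}\}$ gives the direct-sum decomposition
\[
K_\bullet(\LL;\s/(L_{k,1}))_{(\mathbf{n}+\epsilon_k)} \;=\; K_\bullet(\LL';\s/(L_{k,1}))_{(\mathbf{n}+\epsilon_k)} \;\oplus\; K_\bullet(\LL';\s/(L_{k,1}))_{(\mathbf{n})}[1],
\]
whose Euler characteristic is the difference of the two graded pieces of the same complex.

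Iterating this pair ``SES plus direct-sum split'' on $\LL'$ over $\s/(L_{k,1})$, on $\LL''=\LL'\setminus\{L_{k,2}\}$ over $\s/(L_{k,1},L_{k,2})$, and so on—peeling off $L_{k,2},\ldots,L_{k,r_k}$ in succession over progressively deeper quotient rings—eventually expresses the desired vanishing as an equality of Euler characteristics at the $(\mathbf{n})$- and $(\mathbf{n}+\epsilon_k)$-graded pieces of the Koszul complex on $\LL\setminus\LL_k$ over $\s/(\LL_k)$. At this terminal stage the Koszul complex has no generators from the $k$-th group, so its $k$-direction grading is entirely carried by the Buchsbaum-Rim factor $(S(F_k)/\LL_k S(F_k))_{n_k}$, which is annihilated by $\det\phi_k$ via the adjugate identity $\det\phi_k\cdot F_k\subseteq B_k$; together with the support restriction from the first paragraph, this forces the homology of the tensor product to be supported on $\{\m\}$, and the resulting Euler characteristics at consecutive values of $n_k$ coincide. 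The main obstacle I anticipate is organising this nested induction cleanly, in particular checking that the non-zero-divisor hypothesis required for each successive SES is preserved (possibly requiring the reduction to domains from Proposition~\ref{prop:reductiontodomain} at each stage) and that the finite-length homology property propagates through the iterated quotients so every Euler characteristic in sight is well-defined.
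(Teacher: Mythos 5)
Your Steps I (finiteness via localisation) and II (identifying the degree $(1,\ldots,1)$ piece with $K_\bullet(\phi_1,\ldots,\phi_d)$) are essentially the paper's; the divergence is in the invariance-under-shift step, and that is where you have a real gap.

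The paper obtains the short exact sequence of complexes by multiplying by a \emph{variable} $T_{kj}$, which is always a non-zero-divisor in ${\mathcal S} = R[T_{ij}]$, regardless of the structure of $R$. You instead multiply by the Koszul generator $L_{k,1}$, so that the splitting trick $K_\bullet(L_{k,1};{\mathcal S}/(L_{k,1})) = {\mathcal S}/(L_{k,1}) \oplus ({\mathcal S}/(L_{k,1}))(-\epsilon_k)[1]$ becomes available. The trick is genuinely elegant, but it forces you to require injectivity of multiplication by $L_{k,1}$ on ${\mathcal S}$, and then of $L_{k,2}$ on ${\mathcal S}/(L_{k,1})$, and so on down the iteration. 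This is the requirement that $L_{k,1},\ldots,L_{k,r_k}$ form a regular sequence on ${\mathcal S}$, and that is simply false in general: take $R$ a DVR with uniformiser $t$ and $B_k = \mathrm{diag}(t^{a_1},\ldots,t^{a_{r_k}})$ with $a_1 \geq 1$, so $L_{k,j} = t^{a_j}T_{kj}$. Then $t^{a_1-1}T_{k1}$ is a nonzero element of ${\mathcal S}/(L_{k,1})$ killed by $L_{k,2} = t^{a_2}T_{k2}$, so $L_{k,2}$ is already a zero-divisor at the second step. Note this example is over a domain, so the proposed repair via Proposition~\ref{prop:reductiontodomain} does not even address the difficulty; moreover, passing to $R/P$ for minimal primes $P$ changes the Euler characteristics themselves (lengths over $R$ are not recovered from lengths over the $R/P$), so the reduction-to-domains step is not a legitimate move for a length computation in the first place. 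Finally, the ``terminal stage'' of your iteration is also on shaky ground: the graded pieces $(S(F_k)/{\mathcal L}_kS(F_k))_{n_k} = S_{n_k}(F_k)/B_kS_{n_k-1}(F_k)$ are \emph{not} independent of $n_k$, so the claim that consecutive Euler characteristics coincide there does not follow from the adjugate annihilation alone.

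The paper's route avoids all of this. Multiplying by the variable $T_{1j}$ gives an honest short exact sequence without any regularity hypothesis on the $L_{ij}$. The resulting quotient complex is then analysed in Proposition~\ref{prop:oneless}, where the one deleted variable becomes a deleted row of $B_1$; after a column operation (to arrange that a selected $(r_1-1)\times(r_1-1)$ minor, together with $\det(B_2),\ldots,\det(B_d)$, generates a finite-colength ideal), the argument uses the \emph{long exact sequence in Koszul homology} obtained by dropping the surplus element $\tilde L_{11}$ from ${\mathcal L}$. That long exact sequence is valid for an arbitrary element — it does not require a non-zero-divisor — and that is precisely the flexibility your SES-of-modules approach lacks. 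If you want to pursue your decomposition, you would need to replace each short exact sequence $0 \to {\mathcal S}(-\epsilon_k) \to {\mathcal S} \to {\mathcal S}/(L_{k,j}) \to 0$ by the corresponding long exact sequence in Koszul homology for adding $L_{k,j}$ to the generating set, which in effect brings you back to the paper's mechanism.
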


\begin{proof} Step I: We will first see that $H_{\bullet}({\mathcal L};{\mathcal S})_{(n_1,\cdots,n_d)}$ is of finite length for all $n_1,\cdots,n_d \geq 1$. By localising at a non-maximal prime, it suffices to see that $H_{\bullet}({\mathcal L};{\mathcal S})_{(n_1,\cdots,n_d)}$ vanishes for all $n_1,\cdots,n_d \geq 1$
if  $(det(B_1),\cdots, det(B_d)) = R$. The latter condition holds if and only if some $det(B_k)$ is a unit or equivalently some $B_k=F_k$.

Assume without loss of generality that $k=1$. By functoriality of Koszul homology, we may calculate $H_{\bullet}({\mathcal L};{\mathcal S})$ by calculating $H_{\bullet}({\mathcal L}^\prime;{\mathcal S})$ where ${\mathcal L}^\prime$ is obtained from ${\mathcal L}$ by replacing the generators $L_{11},\cdots,L_{1r_1}$ of $B_1$ by $T_{11},\cdots,T_{1r_1}$. We will now show that $H_{\bullet}({\mathcal L}^\prime;{\mathcal S})_{(n_1,\cdots,n_d)}$ vanishes whenever $n_1 \geq 1$ by showing 
\begin{enumerate}
\item[(1)] $H_{\bullet}(T_{11},\cdots,T_{1r_1};{\mathcal S})_{(n_1,\cdots,n_d)}$ vanishes whenever $n_1 \geq 1$, and
\item[(2)] If $H_{\bullet}({\mathcal F};{\mathcal S})_{(n_1,\cdots,n_d)}$ vanishes whenever $n_1 \geq 1$, then so does $H_{\bullet}({\mathcal F} \cup \{H\};{\mathcal S})_{(n_1,\cdots,n_d)}$  whenever $n_1 \geq 1$, where ${\mathcal F}$ is any set of linear forms and $H$ is a linear form of degree $e_k$ with $k \neq 1$.
\end{enumerate}
The statement (1) is clear since $T_{11},\cdots,T_{1r_1}$ is a regular sequence in ${\mathcal S}$ and so the only non-vanishing $H_{\bullet}(T_{11},\cdots,T_{1r_1};{\mathcal S})$ is $H_{0}(T_{11},\cdots,T_{1r_1};{\mathcal S})$ which is $\frac{\mathcal S}{(T_{11},\cdots,T_{1r_1})}$. The non-zero components of this all lie in degrees $(0,n_2,\cdots,n_d)$.

For (2) use the long exact sequence in Koszul homology:\begin{center}
\begin{tikzcd}
\cdots \rightarrow H_{1}({\mathcal F};{\mathcal S})(-e_k)    \arrow[r,"H"] & H_{1}({\mathcal F};{\mathcal S})  \arrow[r] & H_{1}({\mathcal F} \cup \{H\};{\mathcal S}) \arrow[lld]\\
 H_{0}({\mathcal F};{\mathcal S})(-e_k)    \arrow[r,"H"] & H_{0}({\mathcal F};{\mathcal S})  \arrow[r] & H_{0}({\mathcal F} \cup \{H\};{\mathcal S}) \rightarrow 0.
 \end{tikzcd}
\end{center}
which in degree $(n_1,\cdots,n_d)$ gives exactness of
 \begin{center}
\begin{tikzcd}
\cdots \rightarrow H_{1}({\mathcal F};{\mathcal S})_{(n_1,\cdots,n_k-1,\cdots,n_d)}    \arrow[r,"H"] & H_{1}({\mathcal F};{\mathcal S})_{(n_1,\cdots,n_k,\cdots,n_d)}  \arrow{ld}\\
H_{1}({\mathcal F} \cup \{H\};{\mathcal S})_{(n_1,\cdots,n_k,\cdots,n_d)} \arrow[r] & 
H_{0}({\mathcal F};{\mathcal S})_{(n_1,\cdots,n_k-1,\cdots,n_d)} \arrow{ld}[swap]{H}\\
H_{0}({\mathcal F};{\mathcal S})_{(n_1,\cdots,n_k,\cdots,n_d)}   \arrow[r] & H_{0}({\mathcal F} \cup \{H\};{\mathcal S})_{(n_1,\cdots,n_k,\cdots,n_d)}  \rightarrow 0,
 \end{tikzcd}
 \end{center}

 thereby implying (2).
 
 Thus $H_{\bullet}({\mathcal L};{\mathcal S})_{(n_1,\cdots,n_d)}$ is of finite length for all $n_1,\cdots,n_d \geq 1$ and so the Euler-Poincar\'{e} characteristic 
of the homology of $K_{\bullet}({\mathcal L};{\mathcal S})_{(n_1,\cdots,n_d)}$ is finite for all $n_1,\cdots,n_d \geq 1$.

Step II:  Next we will see that the complex $K_{\bullet}({\mathcal L};{\mathcal S})_{(1,\cdots,1)}$ is isomorphic to $K_{\bullet}(\phi_1,\cdots,\phi_d)$ as complexes of (free) $R$-modules. By definition, $K_{\bullet}(\phi_1,\cdots,\phi_d)$ is the tensor product of the complexes $K_{\bullet}(\phi_k)$:
 \begin{center}
\begin{tikzcd}
 0  \arrow[r] &  F_k   \arrow[r,"\phi_k"]  & F_k   \arrow[r]  & 0 ~=~ 0  \arrow[r] &  R^{r_k}   \arrow[r,"B_k"]  & R^{r_k}   \arrow[r]  & 0
\end{tikzcd}
\end{center}
for $k=1,2,\cdots,d$.

On the other hand, as a complex of ${\mathbb N}^d$-graded $R$-modules, $K_{\bullet}({\mathcal L};{\mathcal S})$ is the tensor product (over $R$) of the complexes $K_{\bullet}({\mathcal L}_k;{\mathcal S}_k)$ where ${\mathcal S}_k = R[T_{k1},\cdots,T_{kr_k}]$ which is ${\mathbb N}^d$-graded with all the $T_{kj}$ having degree $e_k$ as also all the elements $L_{k1},\cdots,L_{kr_k}$ of ${\mathcal L}_k$.

Grading considerations now imply that for any ${n} = (n_1,\cdots,n_d) \in {\mathbb N}^d$, the degree ${n}$ component of the complex $K_{\bullet}({\mathcal L};{\mathcal S})$ is given by
$$
K_{\bullet}({\mathcal L};{\mathcal S})_{(n_1,\cdots,n_d)} = \bigotimes_{k=1}^d K_{\bullet}({\mathcal L}_k;{\mathcal S}_k)_{(0,\cdots,0,n_k,0,\cdots,0)}.
$$
In particular, 
$$
K_{\bullet}({\mathcal L};{\mathcal S})_{(1,\cdots,1)} = \bigotimes_{k=1}^d K_{\bullet}({\mathcal L}_k;{\mathcal S}_k)_{e_k}.
$$
The complex $K_{\bullet}({\mathcal L}_k;{\mathcal S}_k)$ has the form 
$$
0 \longrightarrow {\mathcal S}_k(-r_ke_k) \longrightarrow {\mathcal S}_k((-r_k+1)e_k)^{\binom{r_k}{r_k-1}} \longrightarrow \cdots \longrightarrow  {\mathcal S}_k(-e_k)^{\binom{r_k}{1}} \longrightarrow {\mathcal S}_k,
$$
and so $K_{\bullet}({\mathcal L}_k;{\mathcal S}_k)_{e_k}$ has the form
$$
0 \rightarrow   ({\mathcal S}_k)_0 ^{\binom{r_k}{1}} \rightarrow ({\mathcal S}_k)_{e_k}.
$$
In the monomial basis of ${\mathcal S}_k$, this last complex can be identified with
$$
0 \rightarrow R^{r_k} \stackrel{B_k}{\longrightarrow} R^{r_k} \rightarrow 0,
$$
finishing the proof of Step II.
 
 Step III:  Finally we will show that all the complexes $K_{\bullet}({\mathcal L};{\mathcal S})_{(n_1,\cdots,n_d)}$ have the same Euler-Poincar\'{e} characteristic for all $n_1,\cdots,n_d \geq 1$.  It suffices to see that for the injective map of complexes
 $$
 K_{\bullet}({\mathcal L};{\mathcal S}) \hookrightarrow K_{\bullet}({\mathcal L};{\mathcal S})(e_k)
 $$
 given by multiplication by some (any) $T_{kj}$, the quotient complex has finite length homology and vanishing Euler-Poincar\'{e} characteristic in every degree ${n} \geq (1,\cdots,1)$.

We may assume that $k=1$. Now, if $r_1 = 1$, then it is clear that multiplication by $T_{11}$ gives an isomorphism of complexes from $K_{\bullet}({\mathcal L};{\mathcal S})$ to $K_{\bullet}({\mathcal L};{\mathcal S})(e_1)$ and so the quotient complex actually vanishes - this is just a generalisation of the observation we made earlier that when all $r_k$ are 1, the complexes  $K_{\bullet}({\mathcal L};{\mathcal S})_{(n_1,\cdots,n_d)}$ are  all isomorphic. So we may assume that $r_1 > 1$ in the rest of the proof.

In this case, the quotient complex is $K_{\bullet}({\mathcal L};\frac{\mathcal S}{(T_{1j})}(e_1))$. It will suffice to see that $K_{\bullet}({\mathcal L};\frac{\mathcal S}{(T_{1j})})$ has finite length homology and vanishing Euler-Poincar\'{e} characteristic in every degree ${n} \geq (2,1,\cdots,1)$. We can regard this as Koszul homology over the ring $\tilde{\mathcal S} =\frac{\mathcal S}{(T_{1j})}$ of the image $\tilde{\mathcal L}$ of ${\mathcal L}$ in this ring. We're now done by the next proposition.
\end{proof}

\begin{proposition}\label{prop:oneless}
For $r_1>1$, let $$\tilde{\mathcal S} = R[T_{12},\cdots,T_{1r_1},T_{21},\cdots,T_{2r_2}, \cdots, T_{d1},\cdots,T_{dr_d}].$$ 
Let $B_1,\cdots,B_d$ be $r_1 \times r_1$, $\cdots$, $r_d \times r_d$ matrices such that $( (det(B_1),\cdots,det(B_d))$ is of finite colength in $R$. 
Let $\tilde{B}_1$ be the submatrix of $B_1$ obtained by deleting its first row.
Let $\tilde{L}_{ij}$ be linear forms in $\tilde{\mathcal S}$ determined by the columns of $\tilde{B}_1,B_2,\cdots,B_d$ and let $\tilde{\mathcal L}$ denote the collection of all these $\tilde{L}_{ij}$.
Then $K_{\bullet}(\tilde{\mathcal L};\tilde{\mathcal S})$ has finite length homology and vanishing Euler-Poincar\'{e} characteristic in every degree ${n} \geq (2,1,\cdots,1)$.
\end{proposition}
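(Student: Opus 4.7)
The plan mirrors the two-stage structure of the proof of Proposition~\ref{prop:epphi}: first establish finite-length homology via localization (in analogy with Step~I), then prove vanishing Euler characteristic via a shift-invariance argument (in analogy with Step~III).

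For finite length, I localize at any non-maximal prime $P$ of $R$. Since $(\det(B_1),\ldots,\det(B_d))$ is $\mathfrak{m}$-primary, some $\det(B_k)$ is a unit in $R_P$. If $k\geq 2$, the argument of Step~I of the proof of Proposition~\ref{prop:epphi} applies essentially verbatim: by functoriality of Koszul homology one replaces the generators of $\mathcal L_k$ by the regular sequence $T_{k1},\ldots,T_{kr_k}$ and the relevant graded pieces vanish. If $k=1$, the submatrix $\tilde B_1$ has full row rank $r_1-1$ over $R_P$: after reordering, $\tilde L_{12},\ldots,\tilde L_{1r_1}$ form an $R_P$-basis of the rank-$(r_1-1)$ free module $(\tilde{\mathcal S}_1)_{e_1}$, while $\tilde L_{11}$ is an $R_P$-combination of these. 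By functoriality I may replace $\tilde L_{11}$ by $0$ and further change basis so that $\tilde L_{1j}=T_{1j}$ for $j\geq 2$. The resulting complex $K_\bullet(0,T_{12},\ldots,T_{1r_1};(\tilde{\mathcal S}_1)_P)$ has homology isomorphic to $R_P$ concentrated in internal degrees $0$ and $e_1$, which vanishes in degree $n_1 e_1$ for $n_1\geq 2$.

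For the vanishing of the Euler characteristic, I would mirror the shift-invariance strategy. Since $r_1>1$, I can multiply by $T_{1j}$ for some $j\geq 2$---a non-zero-divisor on $\tilde{\mathcal S}$---to obtain the short exact sequence of complexes
\[
0 \to K_\bullet(\tilde{\mathcal L}; \tilde{\mathcal S})_n \xrightarrow{T_{1j}} K_\bullet(\tilde{\mathcal L}; \tilde{\mathcal S})_{n+e_1} \to K_\bullet(\tilde{\mathcal L}; \tilde{\mathcal S}/T_{1j})_{n+e_1} \to 0.
\]
Additivity of $\chi$ in the associated long exact sequence in homology reduces vanishing of $\chi(K_\bullet(\tilde{\mathcal L};\tilde{\mathcal S})_n)$ to vanishing for the quotient complex, which is of the same form but with $r_1-2$ variables in group~1 and still $r_1$ linear forms. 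Iterating this (and using analogous short exact sequences in the $e_k$-directions for $k\geq 2$), I induct on the number of variables remaining in $\tilde{\mathcal S}_1$, reducing to the case where only one variable $T$ remains.

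The hard part will be the base case of this induction. When only a single variable $T$ remains in $\tilde{\mathcal S}_1$, the forms $\tilde L_{1j}$ become scalar multiples $\alpha_j T$ of $T$ with $\alpha_j\in R$, and in internal degree $n_1 e_1$ the complex $K_\bullet(\tilde{\mathcal L}_1;R[T])_{n_1 e_1}$ is identified, after extracting $T$-powers, with the standard Koszul complex $K_\bullet(\alpha_1,\ldots,\alpha_{r_1};R)$. I would then verify that tensoring with the other groups' Koszul complexes yields vanishing Euler characteristic, using that $\det(B_1)\in(\alpha_1,\ldots,\alpha_{r_1})$ by cofactor expansion along the relevant row, together with the $\mathfrak{m}$-primality of $(\det(B_1),\ldots,\det(B_d))$, via a K\"unneth-type spectral sequence computation accounting for the Tor contributions.
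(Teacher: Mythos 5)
Your finiteness argument via localization is sound and differs from the paper's (the paper does not localize here; it performs column operations on $\tilde B_1$ so that $C_1$, the submatrix of the last $r_1-1$ columns, has $(\det(C_1),\det(B_2),\ldots,\det(B_d))$ of finite colength, and then applies Step~I of Proposition~\ref{prop:epphi} to $\mathcal L' = \tilde{\mathcal L}\setminus\{\tilde L_{11}\}$). Your localization route works, though it requires the extra observation, which you supply, that removing a row from an invertible matrix over $R_P$ leaves a matrix whose maximal minors generate the unit ideal (Laplace expansion along the deleted row).

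The vanishing argument, however, has a genuine gap. The short exact sequence of complexes coming from multiplication by $T_{1j}$ yields, at the level of Euler characteristics, the relation
\[
\chi\bigl(K_\bullet(\tilde{\mathcal L};\tilde{\mathcal S})\bigr)_n - \chi\bigl(K_\bullet(\tilde{\mathcal L};\tilde{\mathcal S})\bigr)_{n-e_1} = \chi\bigl(K_\bullet(\tilde{\mathcal L};\tilde{\mathcal S}/T_{1j})\bigr)_n.
\]
Even if the right-hand side vanishes, this shows only that $\chi$ is \emph{constant} in the $e_1$-direction, not that it is zero; you still need the value at some base degree, and your plan does not supply it. Iterating the reduction shrinks $\tilde{\mathcal S}_1$ to one variable but keeps all $r_1$ forms, so the putative base case is the Koszul complex $K_\bullet(\alpha_1,\ldots,\alpha_{r_1};R)$ on $r_1 > d$ scalars, tensored with the Koszul complexes from the other groups. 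There is no reason for that Euler characteristic to vanish, and the proposed K\"unneth-type argument will not produce zero. The paper avoids this issue by a structurally different reduction: instead of killing a \emph{variable}, it drops the \emph{form} $\tilde L_{11}$. The resulting Koszul complex $K_\bullet(\mathcal L';\tilde{\mathcal S})$ has $r_1-1$ forms of degree $e_1$ over a ring with $r_1-1$ variables of degree $e_1$, i.e., exactly the Proposition~\ref{prop:epphi} setup with $r_1$ replaced by $r_1-1$. The long exact sequence from adding $\tilde L_{11}$ gives $\chi(\tilde{\mathcal L};\tilde{\mathcal S})_n = \chi(\mathcal L';\tilde{\mathcal S})_n - \chi(\mathcal L';\tilde{\mathcal S})_{n-e_1}$, and this vanishes because $\chi(\mathcal L';\tilde{\mathcal S})$ is constant for $n\geq (1,\ldots,1)$ by Proposition~\ref{prop:epphi}, Step~III, applied to the smaller, \emph{balanced} system -- a mutual recursion with Proposition~\ref{prop:oneless} that decreases $r_1$ and terminates at $r_1 = 1$. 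Maintaining the balanced structure (one form per variable in each group) is precisely what makes the recursion close; your reduction breaks that balance, which is why it does not terminate in a computable base case.
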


\begin{proof} By the functoriality of Koszul homology, we can perform column operations on $\tilde{B}_1$ without changing $K_{\bullet}(\tilde{\mathcal L};\tilde{\mathcal S})$.
We show that after certain such operations, if $C_1$ denotes the submatrix of $\tilde{B}_1$  comprising of the last $r_1-1$ columns, then $(det(C_1),det(B_2),\cdots,det(B_d))$ is of finite colength in $R$. To see this, 
 if $det(B_1)$ is a unit, then so is some maximal minor of $\tilde{B}_1$ and we can arrange for it to be the minor of the last $r_1-1$ columns. If any of $det(B_2),\cdots,det(B_d)$ is a unit, we need not do any column operations. If neither of these two conditions hold, then $(det(B_1),det(B_2),\cdots,det(B_d))$ is an ${\mathfrak m}$-primary ideal of $R$ and so, with $J = (det(B_2),\cdots,det(B_d))$, the dimension of $\frac{R}{J}$ is 1. Let $\{P_1,\cdots,P_n\}$ be the set of minimal primes of $J$. Then not all the maximal minors $\Delta_1,\cdots,\Delta_{r_1}$ of $\tilde{B}_1$ are contained in some $P_i$ - else so would $det(B_1)$. By Lemma \ref{lemma:avoidance}, there exist $a_2,\cdots,a_{r_1}$ such that $\Delta_1+a_2\Delta_2+\cdots+a_{r_1}\Delta_{r_1}$ is not contained in any $P_i$. Add $a_2$ times the first column of  $\tilde{B}_1$ to its second, $a_3$ times the first column  to its third and so on. For the modified matrix, if $C_1$ is the submatrix  of the last $r_1-1$ columns, then $det(C_1) = \Delta_1+a_2\Delta_2+\cdots+a_{r_1}\Delta_{r_1}$, which together with $det(B_2),\cdots,det(B_d)$ generates a finite colength ideal in $R$.
 
Let ${\mathcal L}^\prime = \tilde{\mathcal L} \setminus \{\tilde{L}_{11}\}$. By Step I of the previous proposition, $K_{\bullet}({\mathcal L}^\prime;\tilde{\mathcal S})$ has finite length homology  in every degree ${n} \geq (1,1,\cdots,1)$. Now the long exact sequence in Koszul homology gives the sequence
\begin{center}
\begin{tikzcd}
\cdots \rightarrow H_{1}({\mathcal L}^\prime;\tilde{\mathcal S})(-e_1)    \arrow[r,"L_{11}"] & H_{1}({\mathcal L}^\prime;\tilde{\mathcal S})  \arrow[r] & H_{1}(\tilde{\mathcal L};\tilde{\mathcal S}) \arrow[lld]\\
 H_{0}({\mathcal L}^\prime;\tilde{\mathcal S})(-e_1)    \arrow[r,"L_{11}"] & H_{0}({\mathcal L}^\prime;\tilde{\mathcal S})  \arrow[r] & H_{0}(\tilde{\mathcal L};\tilde{\mathcal S}) \rightarrow 0.
 \end{tikzcd}
\end{center}
which in degree $(n_1,\cdots,n_d)$ gives exactness of

\begin{center}
\begin{tikzcd}
\cdots \rightarrow H_{1}({\mathcal L}^\prime;\tilde{\mathcal S})_{(n_1-1,n_2,\cdots,\cdots,n_d)}    \arrow[r,"L_{11}"] & H_{1}({\mathcal L}^\prime;\tilde{\mathcal S})_{(n_1,\cdots,n_k,\cdots,n_d)}  \arrow[ld] \\
 H_{1}(\tilde{\mathcal L};\tilde{\mathcal S})_{(n_1,\cdots,n_k,\cdots,n_d)} \arrow[r] & 
 H_{0}({\mathcal L}^\prime;\tilde{\mathcal S})_{(n_1-1,n_2,\cdots,\cdots,n_d)}   \arrow{ld}[swap]{L_{11}} \\
 H_{0}({\mathcal L}^\prime;\tilde{\mathcal S})_{(n_1,\cdots,n_k,\cdots,n_d)}  \arrow[r] & H_{0}(\tilde{\mathcal L};\tilde{\mathcal S})_{(n_1,\cdots,n_k,\cdots,n_d)} \rightarrow 0.
 \end{tikzcd}
\end{center}

Both the finiteness of the length of homology of $K_{\bullet}(\tilde{\mathcal L};\tilde{\mathcal S})$ and the vanishing of the Euler-Poincar\'{e} characteristic in degrees ${n} \geq (2,1,\cdots,1)$ follow from this long exact sequence.
\end{proof}

\medskip
\noindent
{\bf Example.} 
Let $R = k[[x,y]]$, $F_1 = R$, $F_2 = R^2$. Suppose that
$$
B_1 = \left[ x \right], B_2 = \left[ \begin{array}{cc} y & x \\ x & y \end{array} \right].
$$
A basis of $F_1$ is $\{T_{11}\}$ in degree $e_1 = (1,0)$ while a basis of $F_2$ is $\{T_{21},T_{22}\}$ in degree $e_2 = (0,1)$. Then, $$L_{11} = xT_{11}, L_{21} = yT_{21}+xT_{22}, L_{22}=xT_{21}+yT_{22}.$$
The ring ${\mathcal S} = R[T_{11},T_{21},T_{22}]$ which is naturally bigraded. The complex $K_{\bullet}({\mathcal L};{\mathcal S})$ is the Koszul complex over ${\mathcal S}$ with respect to the elements ${\mathcal L} = \{L_{11},L_{21},L_{22}\}$. This has the form
$$
0 \rightarrow {\mathcal S}(-1,-2) \stackrel{\partial_3}{\rightarrow} {\mathcal S}(-1,-1)^{\oplus 2} \oplus {\mathcal S}(0,-2) \stackrel{\partial_2}{\rightarrow} {\mathcal S}(-1,0) \oplus {\mathcal S}(0,-1)^{\oplus 2}  \stackrel{\partial_1}{\rightarrow} {\mathcal S} \rightarrow 0,
$$
with the matrices of the maps given by
$$
\partial_3 =
\left[ \begin{array}{r} L_{22} \\ -L_{21} \\ L_{11} \end{array} \right],
\partial_2 =
\left[ \begin{array}{rrr} L_{21}  & L_{22} & 0 \\ -L_{11} & 0 &  L_{22}\\ 0 & -L_{11} & -L_{21}  \end{array} \right],
\partial_1 =
\left[ \begin{array}{rrr} L_{11}  & L_{21} & L_{22}   \end{array} \right].
$$

The complex $K_{\bullet}({\mathcal L};{\mathcal S})_{(n_1,n_2)}$ is given by (with $n_i^\prime = n_i-1$, $n_i^{\prime\prime} = n_i-2$)
$$
0 \rightarrow {\mathcal S}_{(n_1^\prime,n_2^{\prime\prime})} \rightarrow {\mathcal S}_{(n_1^\prime,n_2^\prime)}^{\oplus 2} \oplus {\mathcal S}_{(n_1,n_2^{\prime\prime})} \rightarrow {\mathcal S}_{(n_1^\prime,n_2)} \oplus {\mathcal S}_{(n_1,n_2^\prime)}^{\oplus 2}  \rightarrow {\mathcal S}_{(n_1,n_2)} \rightarrow 0,
$$
with the restricted maps.
The $R$-module ${\mathcal S}_{(n_1,n_2)}$ is free of rank $n_2+1$ and has basis all $T_{11}^{n_1}T_{21}^kT_{22}^l$ where $k+l = n_2$. To specify matrices for maps between such modules, we will order this basis by decreasing $k$.
Then, the matrices for the maps $L_{11}: {\mathcal S}_{(n_1,n_2)} \rightarrow {\mathcal S}_{(n_1+1,n_2)}$ and $L_{21},L_{22}: {\mathcal S}_{(n_1,n_2)} \rightarrow {\mathcal S}_{(n_1,n_2+1)}$ are given by $L_{11} = xI_{n_2+1}$,
$$
 L_{21} = \left[
\begin{array}{cccccccc}
y & 0 & 0 & 0 & 0\\
x & y & 0 & 0 & 0\\
0 & x & y & 0 & 0\\
& & \ddots & \ddots & \\
0 & 0 & 0 & x & y\\
0 & 0 & 0 & 0 & x
\end{array}
\right]_{(n_2^{\prime\prime},n_2^\prime)}, 
L_{22} =  \left[
\begin{array}{cccccccc}
x & 0 & 0 & 0 & 0\\
y & x & 0 & 0 & 0\\
0 & y & x & 0 & 0\\
& & \ddots & \ddots & \\
0 & 0 & 0 & y & x\\
0 & 0 & 0 & 0 & y\\
\end{array}
\right]_{(n_2^{\prime\prime},n_2^\prime)}
$$

In particular, the complex $K_{\bullet}({\mathcal L};{\mathcal S})_{(1,1)}$ is given by
$$
0 \longrightarrow {\mathcal S}_{(0,0)}^{\oplus 2}  \stackrel{\partial_2}{\longrightarrow} {\mathcal S}_{(0,1)} \oplus {\mathcal S}_{(1,0)}^{\oplus 2}  \stackrel{\partial_1}{\longrightarrow} {\mathcal S}_{(1,1)} \longrightarrow 0
$$
with the matrices of these maps being
$$
\partial_2 =
\left[ \begin{array}{rr} y  & x \\ x & y \\ -x & 0 \\ 0 & -x   \end{array} \right], 
\partial_1 =
\left[ \begin{array}{rrrr} x  & 0 & y & x \\   0 &  x & x & y \end{array} \right].
$$
It is easy to see that this complex is exact with 0th homology of length 2, and hence has Euler-Poincar\'{e} characteristic 2.
As shown in Step II, this complex is isomorphic to the tensor product of the complexes $K_{\bullet}(B_1)$ and  $K_{\bullet}(B_2)$ which are given by
$$
0 \rightarrow R \rightarrow R \rightarrow 0
$$
$$
0 \rightarrow R^2 \rightarrow R^2 \rightarrow 0
$$
with maps given by the matrices $B_1$ and $B_2$.

The matrices for the general complex $K_{\bullet}({\mathcal L};{\mathcal S})_{(n_1,n_2)}$ (for $n_1 \geq 1$, $n_2 \geq 2$) are obtained by replacing $L_{11},L_{21},L_{22}$ in $\partial_1,\partial_2,\partial_3$ with the appropriate sized matrices described above. According to Proposition 16 all these complexes must have the same Euler-Poincar\'{e} characteristic 2 as $K_{\bullet}({\mathcal L};{\mathcal S})_{(1,1)}$.  It is clear from the matrices of $L_{11},L_{21},L_{22}$ that the complex $K_{\bullet}({\mathcal L};{\mathcal S})_{(n_1,n_2)}$ is independent of $n_1 \geq 1$ - this is because $B_1$ is of size 1, and corresponds to the $r_1=1$ case of Proposition 16.

To show the Euler-Poincar\'{e} characteristic of $K_{\bullet}({\mathcal L};{\mathcal S})_{(n_1,n_2)}$ is independent of $n_2 \geq 1$, we need to consider the quotient complex for the injective map of complexes
$$
 K_{\bullet}({\mathcal L};{\mathcal S}) \hookrightarrow K_{\bullet}({\mathcal L};{\mathcal S})(e_2)
 $$
 given by multiplication by any $T_{2j}$, and show that it has finite length homology and vanishing Euler-Poincar\'{e} characteristic in every degree ${n} \geq (1,2)$.

This is equivalent to seeing that with $\tilde{\mathcal S} = R[T_{11},T_{22}]$ and $$\tilde{\mathcal L} = \{\tilde{L}_{11} = xT_{11}, \tilde{L}_{21} =  xT_{22}, \tilde{L}_{22} = yT_{22}\},$$ $K_{\bullet}(\tilde{\mathcal L};\tilde{\mathcal S})$ has finite length homology and vanishing Euler-Poincar\'{e} characteristic in every degree ${n} \geq (1,2)$. Here, with the notation of Proposition 17, considering multiplication by $T_{21}$,
${\mathcal L}^\prime =  \{\tilde{L}_{11}, \tilde{L}_{22}\}$. These form a regular sequence in $\tilde{\mathcal S}$, so the homology of the Koszul complex on these is concentrated in degree 0 and equals $\frac{R[T_{11},T_{22}]}{(xT_{11},yT_{22})}$. This does have finite length homology, equal to $k$, in all degrees ${n} \geq (1,1)$, thereby finishing the proof using the long exact sequence in Koszul homology.
Note that the restriction on the degrees is actually necessary since for $n_1,n_2 \geq 1$,
$$
H_{\bullet}({\mathcal L}^\prime;\tilde{\mathcal S})_{(0,0)} \cong R, ~~H_{\bullet}({\mathcal L}^\prime;\tilde{\mathcal S})_{(n_1,0)} \cong \frac{R}{xR}, ~~H_{\bullet}({\mathcal L}^\prime;\tilde{\mathcal S})_{(0,n_2)} \cong \frac{R}{yR},
$$
none of which are of finite length.\qed

\begin{proposition}\label{prop:nlargemult} With notation as above and as in the statement of Theorem \ref{thm:brmultepchar}, 
$$
\chi(K_{\bullet}({\mathcal L};{\mathcal S})_{(n_1,\cdots,n_d)}) = br(M_1|\cdots|M_d)
$$
 for all sufficiently large $n_1,\cdots,n_d$.
\end{proposition}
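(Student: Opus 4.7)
The plan is to decompose $\chi(K_{\bullet}(\mathcal{L}; \mathcal{S})_{(n)})$ via the short exact sequence of ${\mathbb N}^d$-graded $\mathcal{R}$-modules
$$0 \longrightarrow \mathcal{R} \longrightarrow \mathcal{S} \longrightarrow \mathcal{S}/\mathcal{R} \longrightarrow 0.$$
Since $K_{\bullet}^{\mathcal{R}}(\mathcal{L})$ is a bounded complex of free graded $\mathcal{R}$-modules, tensoring yields a short exact sequence of bounded complexes of ${\mathbb N}^d$-graded $\mathcal{R}$-modules. Restricting to multi-degree $n$ and using the additivity of the Euler-Poincar\'{e} characteristic across short exact sequences of bounded complexes with finite-length homology (which follows from the long exact sequence in Koszul homology), I obtain
$$\chi(K_{\bullet}(\mathcal{L}; \mathcal{S})_{(n)}) = \chi(K_{\bullet}(\mathcal{L}; \mathcal{R})_{(n)}) + \chi(K_{\bullet}(\mathcal{L}; \mathcal{S}/\mathcal{R})_{(n)}).$$

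For the first term on the right, since $(B_1, \ldots, B_d)$ is a joint reduction, the ideal $(\mathcal{L}) \subseteq \mathcal{R}$ is irrelevant by Definition \ref{def:jtred}, so $(\mathcal{R}/(\mathcal{L}))_{(m)} = 0$ for all $m$ coordinatewise above some threshold. Each $H_i(\mathcal{L}; \mathcal{R})$ is a finitely generated ${\mathbb N}^d$-graded $\mathcal{R}$-module (as $\mathcal{R}$ is Noetherian, being a quotient of the finitely generated $R$-algebra $S(M_1 \oplus \cdots \oplus M_d)$), annihilated by $(\mathcal{L})$, hence a finitely generated graded $\mathcal{R}/(\mathcal{L})$-module, which must therefore vanish in all sufficiently large multi-degrees. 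For the second term, each summand of the $|I|$-th component of the Koszul complex is a shifted piece $(\mathcal{S}/\mathcal{R})_{(n - d_I)}$, where $d_I = (a_1, \ldots, a_d)$ if $I$ picks $a_k$ generators from $\mathcal{L}_k$; for $n$ sufficiently large, every such shift lies in the range where $\lambda((\mathcal{S}/\mathcal{R})_{(n - d_I)}) = p(n - d_I)$. Since the modules now all have finite length, the Euler-Poincar\'{e} characteristic equals the alternating sum of lengths of the modules themselves:
$$\chi(K_{\bullet}(\mathcal{L}; \mathcal{S}/\mathcal{R})_{(n)}) = \sum_{a_1=0}^{r_1} \cdots \sum_{a_d=0}^{r_d} (-1)^{a_1+\cdots+a_d} \binom{r_1}{a_1} \cdots \binom{r_d}{a_d}\, p(n_1 - a_1, \ldots, n_d - a_d).$$

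This is the iterated backward finite difference $(\Delta_1^{r_1} \cdots \Delta_d^{r_d}\, p)(n)$, with $(\Delta_i f)(n) = f(n) - f(n - e_i)$. Since $p$ has total degree $r_1 + \cdots + r_d$ and $\Delta_i^{r_i}$ annihilates any monomial of $n_i$-degree strictly less than $r_i$, the only surviving monomial is $n_1^{r_1} \cdots n_d^{r_d}$, which contributes $r_1! \cdots r_d!$ times its coefficient in $p$ — namely, $br(M_1|\cdots|M_d)$ by definition. The main technical obstacle lies in the vanishing step: one must control \emph{all} higher Koszul homology modules $H_i(\mathcal{L}; \mathcal{R})$ in large multi-degrees, not merely $H_0$. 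The passage from the hypothesis that $(\mathcal{L})$ is irrelevant (which only obviously controls $H_0$) to vanishing of every $H_i$ rests on the standard self-annihilation property of Koszul homology combined with multigraded Noetherianness of $\mathcal{R}$; once this is in hand the remainder is a clean combinatorial identity on $p$.
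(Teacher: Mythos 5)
Your proposal is correct and follows essentially the same approach as the paper's proof: both pass to the short exact sequence of complexes induced by $0 \to \mathcal{R} \to \mathcal{S} \to \mathcal{S}/\mathcal{R} \to 0$, both derive the vanishing of $K_{\bullet}(\mathcal{L};\mathcal{R})_{(n)}$ in large multidegrees from the irrelevance of $(\mathcal{L})$ together with the fact that the Koszul homology is a finitely generated module over $\mathcal{R}/(\mathcal{L})$, and both identify $\chi(K_{\bullet}(\mathcal{L};\mathcal{S}/\mathcal{R})_{(n)})$ with the iterated difference $(\Delta_1^{r_1}\cdots\Delta_d^{r_d}f)(n)$, which for large $n$ equals $r_1!\cdots r_d!$ times the leading coefficient of $n_1^{r_1}\cdots n_d^{r_d}$ in the joint Buchsbaum--Rim polynomial. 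Your explicit mention of the self-annihilation of Koszul homology fills in a step the paper leaves implicit, but the argument is the same.
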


\begin{proof} Observe first that as $(B_1,\cdots,B_d)$ is a joint reduction of $(M_1,\cdots,M_d)$, the joint reduction equation implies that the ideal generated by  ${\mathcal L}$ in ${\mathcal R} = S(M)$ is irrelevant  or equivalently that the degrees where the quotient ring $\frac{\mathcal R}{\mathcal L}$ is non-zero are given by the lattice points in a shaded region such as below in ${\mathbb N}^2$ (extended infinitely above and to the right) for $d=2$, and higher dimensional versions of this for larger $d$.
\begin{center}
\includegraphics[scale=0.4]{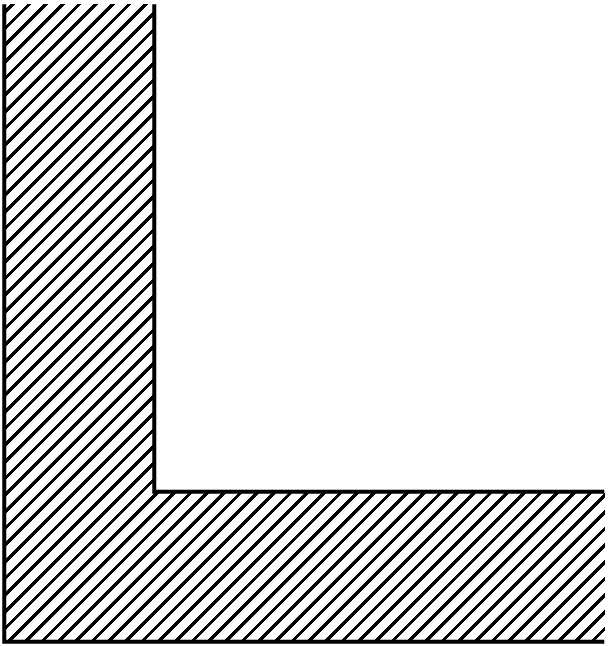}
\end{center}
The same is true for any finitely generated module over this quotient ring. In particular, the homology modules of the Koszul complex $K_{\bullet}({\mathcal L};{\mathcal R})$ all have that appearance. So $H_{\bullet}({\mathcal L};{\mathcal R})_{(n_1,\cdots,n_d)}$ vanishes for all $n_1, \cdots, n_d$ sufficiently large.

The inclusion of $K_{\bullet}({\mathcal L};{\mathcal R})$ in $K_{\bullet}({\mathcal L};{\mathcal S})$ looks like:
\begin{center}
\begin{tikzcd}
0 \arrow[r] &K_{r}({\mathcal L};{\mathcal R}) \arrow[r] \arrow[d] &\cdots \arrow[r] &K_{p}({\mathcal L};{\mathcal R}) \arrow[r]  \arrow[d]   &\cdots \arrow[r] &K_{0}({\mathcal L};{\mathcal R}) \arrow[r]   \arrow[d] &0\\
0 \arrow[r] &K_{r}({\mathcal L};{\mathcal S}) \arrow[r]  &\cdots \arrow[r] &K_{p}({\mathcal L};{\mathcal S}) \arrow[r]   &\cdots \arrow[r] &K_{0}({\mathcal L};{\mathcal S}) \arrow[r]  &0
\end{tikzcd}
\end{center}
Here each module is  ${\mathbb N}^d$-graded and explicitly, in position $p$,
\begin{eqnarray*}
K_{p}({\mathcal L};{\mathcal R}) &=& \bigoplus_{Z \subseteq {\mathcal L}, |Z|=p} {\mathcal R}(-{\nu(Z)}),\\
K_{p}({\mathcal L};{\mathcal S}) &=& \bigoplus_{Z \subseteq {\mathcal L}, |Z|=p} {\mathcal S}(-{\nu(Z)}),
\end{eqnarray*}
where, for a subset $Z$ of ${\mathcal L}$, the associated twist ${\nu(Z)} \in {\mathbb N}^d$ is defined to be the sum of the degrees of all elements in $Z$. Equivalently,  ${\nu(Z)} = (|Z \cap {\mathcal L}_1|,\cdots, |Z \cap {\mathcal L}_d|)$.

For a fixed ${n} = (n_1,\cdots,n_d) \in {\mathbb N}^d$, we consider the  terms of degree ${n}$ in the above diagram. Again, in position $p$, we get
\begin{eqnarray*}
K_{p}({\mathcal L};{\mathcal R})_{{n}} &=& \bigoplus_{Z \subseteq {\mathcal L}, |Z|=p} {\mathcal R}_{{n}-{\nu(Z)}} = \bigoplus_{Z \subseteq {\mathcal L}, |Z|=p} S_{{n}-{\nu(Z)}}(M),\\
K_{p}({\mathcal L};{\mathcal S})_{{n}} &=& \bigoplus_{Z \subseteq {\mathcal L}, |Z|=p} {\mathcal S}_{{n}-{\nu(Z)}} = \bigoplus_{Z \subseteq {\mathcal L}, |Z|=p} S_{{n}-{\nu(Z)}}(F).
\end{eqnarray*}

Thus, in the quotient complex, the module in position $p$ is given by
$$
 \bigoplus_{Z \subseteq {\mathcal L}, |Z|=p} \frac{S_{{n}-{\nu(Z)}}(F)}{S_{{n}-{\nu(Z)}}(M)},
$$
which is a finite length module with length given by
$$
 \sum_{Z \subseteq {\mathcal L}, |Z|=p} f({n}-{\nu(Z)}).
$$
The Euler-Poincar\'{e} characteristic of the quotient complex  is therefore given by
$$
 \sum_{p=0}^r (-1)^p \sum_{Z \subseteq {\mathcal L}, |Z|=p} f({n}-{\nu(Z)}) = \sum_{Z \subseteq {\mathcal L}} (-1)^{|Z|} f({n}-{\nu(Z)}).
$$

We claim that this equals
$$
(\Delta_1^{r_1}\Delta_2^{r_2} \cdots \Delta_d^{r_d} f) (n_1,\cdots,n_d)
$$
where the $\Delta_i$ are the (mutually commuting) difference operators defined by $$(\Delta_i f) (n_1,\cdots,n_d) = f(n_1,\cdots,n_d) - f(n_1,\cdots,n_i-1,\cdots,n_d).$$
To prove this claim, first note by induction that
\begin{eqnarray*}
(\Delta_1^{r_1}\Delta_2^{r_2} \cdots \Delta_d^{r_d} f) (n_1,\cdots,n_d) =&\\
 \sum_{p_1=0}^{r_1} \cdots \sum_{p_d=0}^{r_d} (-1)^{p_1+\cdots+p_d} &\binom{r_1}{p_1} \cdots \binom{r_d}{p_d} ~f(n_1-p_1,\cdots,n_d-p_d).
\end{eqnarray*}
Then finish by observing that for each $(p_1,\cdots,p_d) \leq (r_1,\cdots,r_d)$ the number of subsets $Z$ of ${\mathcal L}$ with ${\nu(Z)} = (p_1,\cdots,p_d)$ is given by $\binom{r_1}{p_1} \cdots \binom{r_d}{p_d}$ and each of these subsets contributes $(-1)^{p_1+\cdots+p_d} f(n_1-p_1,\cdots,n_d-p_d)$ to the Euler-Poincar\'{e} characteristic.

We have shown that the Euler-Poincar\'{e} characteristic of the quotient complex (as well as that of its homology) in degree $(n_1,\cdots,n_d)$ is given by 
$$
(\Delta_1^{r_1}\Delta_2^{r_2} \cdots \Delta_d^{r_d} f) (n_1,\cdots,n_d).
$$
Since $f$ is given by a polynomial  $p(n_1,\cdots,n_d)$ for all sufficiently large $n_1,\cdots,n_d$ with $deg(p) = r = r_1+\cdots+r_d$, it follows that the Euler-Poincar\'{e} characteristic of the quotient complex  in degree $(n_1,\cdots,n_d)$ is given by $r_1!\cdots r_d!$ times the coefficient of $n_1^{r_1}\cdots n_d^{r_d}$ in $p$, for all 
sufficiently large $n_1,\cdots,n_d$. Therefore, the  Euler-Poincar\'{e} characteristic of the quotient complex  in degree $(n_1,\cdots,n_d)$ is the mixed Buchsbaum-Rim multiplicity $br(M_1|\cdots|M_d)$ for all sufficiently large $n_1,\cdots,n_d$.

We have already seen that $K_{\bullet}({\mathcal L};{\mathcal R})_{(n_1,\cdots,n_d)}$ is exact for all sufficiently large $n_1,\cdots,n_d$. It follows that the  Euler-Poincar\'{e} characteristic of $K_{\bullet}({\mathcal L};{\mathcal S})_{(n_1,\cdots,n_d)}$ is $br(M_1|\cdots|M_d)$ for all sufficiently large $n_1,\cdots,n_d$.
But by Proposition \ref{prop:epphi}, the  Euler-Poincar\'{e} characteristic of  $K_{\bullet}({\mathcal L};{\mathcal S})_{(n_1,\cdots,n_d)}$ is the same for all $n_1,\cdots,n_d \geq 1$ and equals $\chi(K_{\bullet}(\phi_1,\cdots,\phi_d))$.
\end{proof} 

Note that the complex $K_{\bullet}({\mathcal L};{\mathcal S})$ does not depend directly on $M_1,\cdots,M_d$ but only on the joint reduction $(B_1,\cdots,B_d)$. Also note that when all $r_k=1$, so that $M_1,\cdots,M_d$ is just a collection  of ${\mathfrak m}$-primary ideals and the joint reduction is just a collection $b_1,\cdots,b_d$ of elements of $R$, then, not just  is the  Euler-Poincar\'{e} characteristic of  $K_{\bullet}({\mathcal L};{\mathcal S})_{(n_1,\cdots,n_d)}$ the same for  all $n_1,\cdots,n_d \geq 1$,  the complexes  $K_{\bullet}({\mathcal L};{\mathcal S})_{(n_1,\cdots,n_d)}$ are themselves all isomorphic to the Koszul complex of 
$b_1,\cdots,b_d$, thereby proving Theorem \ref{thm:brmultepchar} in this case, without any need of Propositions \ref{prop:epphi} and \ref{prop:oneless}. 

\medskip
\noindent
\textit{Proof of Theorem \ref{thm:brmultepchar}.} By Proposition \ref{prop:epphi},   $\chi(K_{\bullet}(\phi_1,\cdots,\phi_d))$ equals the Euler-Poincar\'{e} characteristic of  $K_{\bullet}({\mathcal L};{\mathcal S})_{(n_1,\cdots,n_d)}$ for all $n_1,\cdots,n_d \geq 1$ and by Proposition \ref{prop:nlargemult}, the Euler-Poincar\'{e} characteristic of the complex $K_{\bullet}({\mathcal L};{\mathcal S})_{(n_1,\cdots,n_d)}$ equals $br(M_1|\cdots|M_d)$ for all sufficiently large $n_1,\cdots,n_d$.
\qed

\section{Mixed Buchsbaum-Rim multiplicities of modules and ideals}

Suppose, as in the previous section, $M_1\subseteq F_1, \ldots, M_d\subseteq F_d$ is a collection of finite $R$-modules such that each $M_k$ has finite colength in $F_k$. The purpose of this section is to prove that 
$br(M_1|\cdots | M_d) = e(I_1|\cdots | I_d)$, the mixed multiplicity of $I_1, \ldots, I_d$, where $I_k$ denotes the ideal of maximal minors of the matrix whose columns are the generators of $M_k$. However, most of the work done in this section goes towards generalising the following lemma stated slightly differently in \cite{Grt1967} and \cite{Flt1998}. We give both statements below. We believe the generalisation we provide is interesting in its own right. 

\begin{lemma}[Lemma 21.10.17.3 of \cite{Grt1967}]
Let $R$ be a Noetherian local ring of dimension 1, $F$ be a free $R$-module of finite rank and $\phi: F \rightarrow F$ be an injective endomorphism.  Then
$$
\lambda\left( \frac{F}{im(\phi)} \right) = \lambda\left( \frac{R}{(det(\phi))} \right).
$$
\end{lemma}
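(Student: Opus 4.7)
The plan is to reduce the equality to the rank-one case (where it is tautological) via an additivity principle for compositions of injective endomorphisms.

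First I would argue both sides are finite. Localising $\phi$ at any minimal prime $P$ of $R$ gives an injective endomorphism of a finite free module over the Artinian local ring $R_P$, hence an isomorphism; so $\det(\phi)$ is a unit at every minimal prime, making it a non-zero-divisor of $R$, while $F/\mathrm{im}(\phi)$ is supported only at $\mathfrak m$. Both quotients therefore have finite length.

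The key observation is that both sides are additive under composition of injective endomorphisms of $F$. If $\phi = \phi_1 \circ \phi_2$ with each $\phi_i$ injective, the short exact sequence
\[
0 \to \phi_1(F)/\phi(F) \to F/\phi(F) \to F/\phi_1(F) \to 0,
\]
combined with the $\phi_1$-induced isomorphism $\phi_1(F)/\phi(F) \cong F/\phi_2(F)$, gives $\lambda(F/\phi(F)) = \lambda(F/\phi_1(F)) + \lambda(F/\phi_2(F))$. The exact sequence $0 \to R/(\det\phi_2) \xrightarrow{\det\phi_1} R/(\det\phi) \to R/(\det\phi_1) \to 0$ gives the analogous additivity on the right. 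In particular, invertible changes of basis contribute nothing to either side.

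With additivity in hand I would induct on $r = \mathrm{rank}(F)$. The case $r=1$ is the tautology $F/\phi(F) = R/(\det\phi)$. When some entry of $\phi$ is a unit, invertible row and column operations bring $\phi$ into block form $\mathrm{diag}(a,\phi')$ with $\phi'$ of rank $r-1$; since $F/\phi(F) \cong R/(a) \oplus F'/\phi'(F')$ and $\det\phi = a\det\phi'$, the induction closes by combining the rank-$1$ verification with the inductive hypothesis on $\phi'$.

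The main obstacle is the case in which every entry of $\phi$ lies in $\mathfrak m$: over a non-PID one-dimensional local ring, no invertible row/column operation will produce a unit entry to peel off. My intended route is to use the adjoint identity $\phi \cdot \mathrm{adj}(\phi) = \det(\phi) \cdot I$, which by additivity gives
\[
\lambda(F/\phi F) + \lambda(F/\mathrm{adj}(\phi) F) \;=\; \lambda(F/\det(\phi) F) \;=\; r\,\lambda(R/(\det\phi)),
\]
and the companion identity $\det(\mathrm{adj}(\phi)) = \det(\phi)^{r-1}$. The circularity between $\phi$ and $\mathrm{adj}(\phi)$ can be broken by passing to the completion $\widehat R$ (which preserves finite lengths) and thence to the total quotient ring $Q(\widehat R)$, which is Artinian; there $\phi$ admits a Smith-type factorisation, and descending this factorisation to $R$ via additivity yields the result.
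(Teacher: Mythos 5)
You are proving a cited result for which the paper supplies no proof of its own: the lemma is stated, attributed to Grothendieck, paired with Fulton's Lemma A.2.6, and then subsumed into the generalisation Theorem~\ref{thm:comparison}, whose proof is explicitly modelled on Fulton's. Your plan has the same skeleton as that proof --- additivity of both invariants under composition of injective endomorphisms, the rank-one/triangular base case, and induction on rank --- so in effect you are re-deriving the $q=d=1$ case of Theorem~\ref{thm:comparison}. The genuine divergence is the mechanism for reaching triangular matrices when every entry of $\phi$ lies in $\mathfrak m$. The paper's Proposition~\ref{prop:trian} is a denominator-free Gaussian elimination: a prime-avoidance column operation puts $b_{11}$ outside the relevant primes, $b_{11}^{r}B$ is exhibited explicitly as a product of a lower-triangular matrix and a bordered block, and one recurses. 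Your alternative --- pass to $\widehat R$, thence to $Q(\widehat R)$, factor there, and descend --- is closer in spirit to Grothendieck's original route (the paper remarks that his proof ``proceeds by completing and reducing to the case of a discrete valuation ring''). It works, but it shifts the burden to the descent step, which you leave implicit: over $Q(\widehat R)$ one writes $\phi$ as a product of elementary and diagonal matrices, clears denominators by a single non-zero-divisor $s$ to obtain a factorisation of $s^{m}\phi$ into triangular matrices over $\widehat R$, and then cancels the $s$-contribution on both sides via additivity.

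Three points need repair. First, the assertion that $Q(\widehat R)$ is Artinian fails when $R$ has depth zero: then $\mathfrak m$ is an associated prime, the only non-zero-divisors are units, and $Q(\widehat R)=\widehat R$ is one-dimensional. That case is in fact vacuous for the lemma --- an injective $\phi$ has $\det(\phi)$ a non-zero-divisor, hence a unit in depth zero, so $\phi$ is an isomorphism and both lengths vanish --- but this needs to be said rather than swept into a false claim about $Q(\widehat R)$. Second, the deduction ``$\det(\phi)$ is a unit at every minimal prime, hence a non-zero-divisor'' is not valid in the presence of embedded primes; what you want is McCoy's theorem, which gives directly that a square matrix over a commutative ring is injective if and only if its determinant is a non-zero-divisor. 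Third, the adjoint-identity paragraph does not in the end feed into the factor-and-descend argument and can be deleted; it sets up a relation between $\phi$ and $\mathrm{adj}(\phi)$ that your final route never uses.
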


Over a Noetherian ring $R$, for an endomorphism $\phi$ of a finitely generated module $M$,  say that $e(\phi,M)$ is defined if both $ker(\phi)$ and $coker(\phi)$ are finite length $R$-modules and in this case, set $e(\phi,M) = \lambda(coker(\phi)) - \lambda(ker(\phi))$.

\begin{lemma}[Lemma A.2.6 of \cite{Flt1998}] Let $R$ be a Noetherian ring, $F$ be a free $R$-module of finite rank and $\phi: F \rightarrow F$ be an endomorphism. Then $e(\phi,F)$ is defined if and only if $e(det(\phi),R)$ is defined, and when they are both defined, they are equal.
\end{lemma}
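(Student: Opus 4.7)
The statement has two parts---equivalence of the ``defined'' conditions and equality when both are defined---which I would tackle in order, reducing each to more classical statements.

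For the ``defined iff defined'' part, the key fact is that a square matrix over a commutative ring is invertible iff its determinant is a unit. Applied to $\phi_P : F_P \to F_P$ for each prime $P \in \mathrm{Spec}(R)$, this shows $\phi_P$ is an isomorphism iff $(\det\phi)_P$ is a unit in $R_P$. Therefore
\[
\mathrm{Supp}(\ker\phi) \cup \mathrm{Supp}(\mathrm{coker}\,\phi) = V(\det\phi) = \mathrm{Supp}(R/(\det\phi)) \supseteq \mathrm{Supp}((0\!:\!\det\phi)),
\]
and since a finitely generated module has finite length iff its support is a finite set of maximal ideals, the two ``defined'' conditions coincide.

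For the equality, I would first use additivity of length, $\lambda_R(M) = \sum_{\mathfrak m} \lambda_{R_\mathfrak m}(M_\mathfrak m)$ over the common finite maximal support, to reduce to the case where $(R, \mathfrak m)$ is local. The plan in the local case is to assemble three ingredients: (a) \emph{element multiplicativity} $e(ab, R) = e(a, R) + e(b, R)$ when all three are defined, derived from the four-term exact sequence
\[
0 \to (0\!:\!a)/b(0\!:\!ab) \to R/(b) \xrightarrow{\,a\,} R/(ab) \to R/(a) \to 0
\]
combined with the identification $b(0\!:\!ab) \cong (0\!:\!ab)/(0\!:\!b)$; (b) \emph{composition additivity} $e(\phi_2 \phi_1, F) = e(\phi_1, F) + e(\phi_2, F)$, from direct length counts on the short exact sequence $0 \to \ker\phi_1 \to \ker(\phi_2\phi_1) \to \phi_1(F) \cap \ker\phi_2 \to 0$ and its cokernel analogue; and (c) the Grothendieck lemma stated just before in the excerpt, which treats the one-dimensional injective case.

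These ingredients combine via the adjugate identity $\phi \circ \mathrm{adj}(\phi) = \det(\phi) \cdot I_n$. Since $\det\phi \cdot I_n$ acts coordinatewise, $e(\det\phi \cdot I_n, F) = n \cdot e(\det\phi, R)$, so by (b),
\[
e(\phi, F) + e(\mathrm{adj}(\phi), F) = n \cdot e(\det\phi, R),
\]
while $\det\mathrm{adj}(\phi) = (\det\phi)^{n-1}$ together with (a) gives $e((\det\phi)^{n-1}, R) = (n-1)\,e(\det\phi, R)$. So the desired equality for $\phi$ is equivalent to the desired equality for $\mathrm{adj}(\phi)$---still an $n \times n$ matrix, so induction on $n$ does not shrink it. This is the main obstacle. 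My plan to break the apparent circularity is to induct on $\lambda(R/(\det\phi))$: the base case $\det\phi \in R^\times$ is trivial; in the general case, first handle $\phi$ injective (where $\det\phi$ is a non-zero-divisor generating an $\mathfrak m$-primary ideal, forcing $\dim R = 1$ and invoking Grothendieck directly), then reduce the non-injective case to the injective case by a generic perturbation---passing to $R[[T]]$ and replacing $\phi$ by $\phi + T \cdot I_n$, whose determinant is monic of degree $n$ in $T$ and hence a non-zero-divisor in $R[[T]]$---and specializing $T \mapsto 0$ using a length-continuity argument.
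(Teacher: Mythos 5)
The paper does not prove this lemma itself; it cites Fulton and Grothendieck, and the paper's own Theorem \ref{thm:comparison} (which contains this statement as the case $q=1$) is proved by a different device: Lemma \ref{lemma:avoidance} and Proposition \ref{prop:trian} are used to write $a\phi$, for a suitable non-zero-divisor $a$, as a product of triangularizable endomorphisms, and the equality is then obtained from the triangular case (an easy rank induction) together with a multiplicativity property of the set of endomorphisms for which the equality holds. Your route via the adjugate identity, Grothendieck's one-dimensional injective case, and a formal perturbation is therefore a genuinely different one from both Fulton's $K$-theoretic argument and the paper's.

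The ``defined iff defined'' support argument, the localization to $(R,\mathfrak m)$ local, ingredients (a) and (b), and the injective local case (McCoy forces $\det\phi$ to be a non-zero-divisor, so $\dim R\le 1$ and Grothendieck applies) are all sound. The gap is in the non-injective case. Over $\tilde R=R[[T]]$ with $\phi_T=\phi+T\,I_n$, the element $\det\phi_T$ is indeed a non-zero-divisor (monic polynomials are $R[T]$-regular, and $R[[T]]$ is flat over $R[T]$), but $e(\phi_T,F\otimes_R\tilde R)$ is \emph{not defined}: in the only interesting case $\dim R=1$, one has $\dim\tilde R=2$, and the principal ideal $(\det\phi_T)$ cannot cut this down to dimension $0$, so $\tilde R/(\det\phi_T)$---and hence $\mathrm{coker}(\phi_T)$---has infinite length. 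Already for $R=k[[x]]$, $n=1$, $\phi=[x]$, one gets $\tilde R/(\det\phi_T)=k[[x,T]]/(x+T)\cong k[[x]]$, of infinite length. So neither Grothendieck's lemma nor any inductive hypothesis applies to $\phi_T$, and the ``length-continuity'' specialization $T\mapsto 0$ has no statement to specialize.

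A correct way to finish the non-injective case that stays within your framework: a nonzero finite-length kernel forces $\mathrm{depth}\,R=0$. Put $N=H^0_{\mathfrak m}(R)$ (finite length) and $R'=R/N$ (of depth $\ge 1$). Since $\phi$ preserves $N^n=H^0_{\mathfrak m}(F)\subseteq F$, the snake lemma applied to $0\to N^n\to F\to (R')^n\to 0$ yields $e(\phi,F)=e(\phi|_{N^n},N^n)+e(\phi',(R')^n)=e(\phi',(R')^n)$, the first term vanishing because $N^n$ has finite length. Now $\phi'$ is injective (its kernel is a finite-length submodule of a free module over a ring of positive depth), $\det\phi'$ is the image of $\det\phi$ in $R'$, and the analogous snake-lemma computation on $0\to N\to R\to R'\to 0$ reduces $e(\det\phi,R)$ to $e(\overline{\det\phi},R')$; the already-settled injective case over $R'$ then completes the argument.
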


Fulton refers to this as an ``important" lemma and gives an elegant  proof motivated by K-theory that he attributes to Iversen. Grothendieck's proof proceeds by completing and reducing to the case of a discrete valuation ring. Our proof of the generalisation is based on Fulton's. 

We begin with free submodules $F_1, \ldots, F_q$, with $F_k$ of rank $r_k$. For each $1\leq k\leq q$, let $\phi_k: F_k \rightarrow F_k$ be an endomorphism, and, as before, set $K_{\bullet}(\phi_1, \ldots, \phi_q)= K(\phi_1)\otimes \cdots \otimes K(\phi_q)$, where $K_{\bullet}(\phi_k)$ is the complex:
 \begin{center}
\begin{tikzcd}
 0  \arrow[r] &  F_k   \arrow[r,"\phi_k"]  & F_k   \arrow[r]  & 0,
\end{tikzcd}
\end{center} As a complex of $R$-modules, one can can describe $K_{\bullet}(\phi_1, \ldots, \phi_q)$ as follows: Set $F = F_1\otimes \cdots \otimes F_q$
 \[\Phi_p = 1_{F_1}\otimes\cdots  1_{F_{p-1}}\otimes \phi_p\otimes 1_{F_{p+1}}\otimes \cdots \otimes 1_{F_q}, \] for each $0\leq p\leq q$, and write $C_p$ for set set of all set  of $p$-tuples  $(i_1, \ldots, i_p)$ with $0\leq i_1< \cdots <  i_p\leq q$ and if $C\in C_p$, write $C(i_j)$ for the element of $C_{p-1}$ with $i_j$ removed from $C$. Set $K_p = \bigoplus_{C\in C_p} F_C$, where $F_C$ is just a copy of $F$. Then it is not difficult to see that $K_{\bullet}(\phi_1, \ldots, \phi _d)$ is the complex of $R$-modules
 \[
 0\lra K_q\lra K_{q-1}\cdots \lra K_p\overset{\partial _p}\lra K_p\cdots \lra K_1\to K_0\lra 0,
 \] where $\partial_p (v_C) = \sum_{j = 1}^p (-1)^{j+1}\Phi_j(v_{C(i_j)})$, for $C\in C_p$. This suggests that we can treat $F$ as a module over $R[X_1, \ldots, X_q]$ by letting $X_k$ act as $\Phi_k$. Thus, as a complex of $R[X_1, \ldots, X_q]$-modules, 
\[K_{\bullet}(\phi_1, \ldots, \phi_d) = K_{\bullet}(X_1, \ldots, X_q; F),
 \] the Koszul complex of $X_1, \ldots, X_q$ on $F$. In the proof below, we will have occasion to change modules and endomorphisms, so we will write $F(\phi_1, \ldots, \phi_d) = F$, to indicate the dependence of $F$ on the $F_k$ and $\phi_k$. Thus, if $G$ is another free $R$-module with endomorphism $\psi$, we will write $F(\psi, \phi_2, \ldots, \phi _q)$ for $G\otimes F_2\otimes \cdots \otimes F_q$ with the understanding that $X_1$ acts as $\psi\otimes 1_{F_2}\otimes \cdots \otimes 1_{F_q}$, and the remaining $X_k$ act in the obvious manner, using $1_G$ instead of $1_{F_1}$. 
 
 \medskip
Our goal is to compare the complex $K_{\bullet}(\phi_1, \ldots, \phi_q)$ with homology $H_{\bullet}(\phi_1, \ldots, \phi _q)$ to the usual Koszul complex $K_{\bullet}(det(\phi_1),\cdots,det(\phi_q))$ over $R$ and its homology denoted by $H_{\bullet}(det(\phi_1),\cdots,det(\phi_q))$. Our generalisation of the lemma above is the following.

\begin{theorem}{\label {thm:comparison}}
With notation as above, the homology of $K_{\bullet}(\phi_1,\cdots,\phi_q)$ is of finite length (as an $R$-module) if and only if $(det(\phi_1),\cdots,det(\phi_q))$ is a finite colength ideal of $R$, in which case, there is an equality of Euler-Poincar\'{e} characteristics
$$
\chi(K_{\bullet}(\phi_1,\cdots,\phi_q)) = \chi(K_{\bullet}(det(\phi_1),\cdots,det(\phi_q)).
$$
Further, if $R$ is Cohen-Macaulay and $q=d$, both the Koszul complexes are acyclic and hence,
$$
\lambda\left( H_0(\phi_1, \ldots, \phi _d)\right) = \lambda\left( \frac{R}{(det(\phi_1),\cdots,det(\phi_d))} \right).
$$
\end{theorem}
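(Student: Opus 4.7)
The plan is to exploit the identification already recorded in the exposition: with $A = R[X_1, \ldots, X_q]$ acting on $F = F_1 \otimes \cdots \otimes F_q$ via $X_k \mapsto \Phi_k$, the complex $K_{\bullet}(\phi_1, \ldots, \phi_q)$ coincides with the classical Koszul complex $K_{\bullet}(X_1, \ldots, X_q; F)$ of $A$-modules. This brings both the support analysis and the Euler-characteristic comparison into the standard Koszul framework.

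For the finite-length criterion, I would invoke Cayley--Hamilton to see that $X_k$ acts on $F$ satisfying its characteristic polynomial $f_k(T) \in R[T]$, whose constant term is $(-1)^{r_k}\det(\phi_k)$. Hence every Koszul homology module is supported (as an $R$-module) inside $V(\det\phi_1, \ldots, \det\phi_q)$, giving the ``if'' direction. For the converse, the direct identification $H_0(\phi_1, \ldots, \phi_q) \cong \bigotimes_k \operatorname{coker}(\phi_k)$ combined with the Fitting-ideal formula $\operatorname{Supp} \operatorname{coker}(\phi_k) = V(\det\phi_k)$ shows the support of $H_0$ equals $V(\det\phi_1, \ldots, \det\phi_q)$ exactly, so finite length of $H_0$ forces this ideal to be $\mathfrak{m}$-primary.

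For the Euler-characteristic equality I would induct on $N = \sum_k(r_k - 1)$. The base case $N = 0$ is trivial since then all $\phi_k$ are scalars and the two complexes literally coincide. For the inductive step, say $r_1 \geq 2$, the goal is to exhibit a $\phi_1$-stable short exact sequence of free $R$-modules $0 \to F_1' \to F_1 \to F_1'' \to 0$ with both outer ranks positive. Given such, tensoring with $K_{\bullet}(\phi_2) \otimes \cdots \otimes K_{\bullet}(\phi_q)$ yields a short exact sequence of Koszul complexes, producing by additivity of $\chi$ the decomposition $\chi(K_{\bullet}(\phi_1, \phi_2, \ldots)) = \chi(K_{\bullet}(\phi_1', \phi_2, \ldots)) + \chi(K_{\bullet}(\phi_1'', \phi_2, \ldots))$; on the ideal side, $\det\phi_1 = \det(\phi_1')\det(\phi_1'')$ combined with the classical single-slot additivity $\chi(K_{\bullet}(ab, x_2, \ldots, x_q; R)) = \chi(K_{\bullet}(a, x_2, \ldots, x_q; R)) + \chi(K_{\bullet}(b, x_2, \ldots, x_q; R))$ matches this, and the induction concludes. \emph{The main obstacle is producing the invariant filtration}, since a generic endomorphism has no proper $\phi_1$-stable free direct summand. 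I would circumvent this by a generic-reduction argument: first reduce to the case $R$ is a complete local domain using additivity of $\chi$ along a filtration of $R$ by $R/P$-submodules for minimal primes, then base-change to a module-finite faithfully flat extension $R \to R'$ obtained by iteratively adjoining roots of the characteristic polynomial of $\phi_1$ until it splits. Over $R'$ the required filtration exists; since Koszul complexes commute with flat base change and Euler characteristics rescale by the generic degree of the extension, the equality descends to $R$. (Alternatively, the K-theoretic argument of Iversen, reinterpreting both sides as Euler characteristics of derived tensor products $F \otimes^L_{R[X_1,\ldots,X_q]} R$, could be used.)

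For the Cohen--Macaulay case with $q = d$, since $(\det\phi_1, \ldots, \det\phi_d)$ is $\mathfrak{m}$-primary it is a system of parameters in $R$, hence a regular sequence, yielding immediate acyclicity of $K_{\bullet}(\det\phi_1, \ldots, \det\phi_d; R)$ with $H_0 = R/(\det\phi_1, \ldots, \det\phi_d)$. For $K_{\bullet}(\phi_1, \ldots, \phi_d) = \bigotimes_k K_{\bullet}(\phi_k)$, each $\phi_k$ is injective so $\operatorname{coker}(\phi_k)$ has projective dimension one, hence is CM of dimension $d-1$ by Auslander--Buchsbaum. I would inductively show that the partial tensor products $M_k := \operatorname{coker}(\phi_1) \otimes \cdots \otimes \operatorname{coker}(\phi_k)$ remain CM of dimension $d - k$ and that $\det(\phi_{k+1})$ is a nonzerodivisor on $M_k$: the unmixedness of CM modules forces the associated primes of $M_k$ to lie among the minimal primes of $(\det\phi_1, \ldots, \det\phi_k)$, and the regular-sequence property of the $\det\phi_j$'s prevents $\det\phi_{k+1}$ from lying in any of these. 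This vanishes $\operatorname{Tor}^R_{>0}(M_k, \operatorname{coker}(\phi_{k+1}))$ (by the length-one resolution of $\operatorname{coker}(\phi_{k+1})$), gives acyclicity of the tensor-product complex, and, combined with the Euler-characteristic equality, yields the final length identity.
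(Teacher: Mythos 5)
Your treatment of the finite-length criterion is sound and close in spirit to the paper's Step~I (the paper localizes at non-maximal primes rather than invoking Cayley--Hamilton, but both arguments hinge on identifying $H_0$ with $\bigotimes_k \operatorname{coker}(\phi_k)$ and its support with $V(\det\phi_1,\ldots,\det\phi_q)$).

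The heart of the argument---the Euler-characteristic equality---has a genuine gap. You correctly name the obstruction: a general $\phi_1$ need not have a proper $\phi_1$-stable free direct summand of $F_1$. But the proposed remedy does not overcome it. Adjoining roots of the characteristic polynomial of $\phi_1$ to make it split over a finite flat extension $R'$ of a complete local domain $R$ does \emph{not} make $\phi_1$ triangularizable over $R'$: over a ring, as opposed to a field, the eigenvector belonging to a root of the characteristic polynomial need not be unimodular, so it need not span a free rank-one direct summand. Concretely, take $R = k[[x,y]]$ and let $\phi_1$ have matrix with rows $(0,\ x)$ and $(y,\ 0)$; its characteristic polynomial is $T^2-xy$, which splits as $(T-z)(T+z)$ after adjoining $z$ with $z^2 = xy$, but the eigenvectors $(x,z)$ and $(z,y)$ lie entirely in the maximal ideal of $R' = R[z]/(z^2-xy)$, so $(R')^2$ has no $\phi_1$-stable free rank-one direct summand. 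The short exact sequence $0 \rightarrow F_1' \rightarrow F_1 \rightarrow F_1'' \rightarrow 0$ that your induction needs therefore cannot in general be manufactured even after the proposed base change, and the inductive argument does not get off the ground.

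The paper circumvents the same obstruction by a genuinely different decomposition: it \emph{factors the endomorphism} instead of \emph{filtering the module}. Proposition~\ref{prop:trian} shows that $b\phi_1$ is a product of triangular endomorphisms for a suitable nonzerodivisor $b$, via Schur-complement manipulations and prime avoidance; a separate ``multiplicativity of $\chi$'' principle (Step~IV: if $\phi_k = \gamma_k\rho_k$ and any two of the three resulting tuples satisfy the equality, so does the third, proved via the short exact sequence of Koszul complexes attached to a block-upper-triangular endomorphism built from $\gamma_1$ and $\rho_1$) then reduces the general case to the triangularisable one. Only there (Step~III) does the invariant-filtration idea you had in mind actually apply. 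So the obstacle you identified is real, and the route around it is multiplicative, not base-change/filtration.

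Your CM/acyclicity sketch is plausible but delegates substantial verification to the reader (e.g.\ that the associated primes of the iterated tensor products of cokernels lie only among the minimal primes of the corresponding determinant ideals). The paper dispatches this case in one sentence via the Peskine--Szpiro acyclicity lemma.
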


Before embarking on the proof of this theorem, we will prove a few preliminary results that we need. We omit the proof of the next lemma which follows directly from the version of prime avoidance stated in Lemma 1.2.2 of \cite{BrnHrz1993}.

\begin{lemma}\label{lemma:avoidance}
Let $R$ be a ring, $\{P_1,\cdots,P_n\}$ be a finite set of prime ideals of $R$ 
 and for some $r \in {\mathbb N}$, let $b_1,\cdots,b_r \in R$ be such that they are not all contained in any $P_i$. Then there exist
$a_2,\cdots,a_r \in R$ such that $b_1+a_2b_2+\cdots+a_rb_r$ is not contained in any $P_i$. \qed
\end{lemma}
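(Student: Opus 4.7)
The plan is to reduce the lemma to the coset form of prime avoidance (the content of Lemma 1.2.2 in \cite{BrnHrz1993}), which states that if $x + N \subseteq \bigcup_{i=1}^n P_i$ with the $P_i$ prime, then $x + N \subseteq P_i$ for some single index $i$.

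Set $x = b_1$ and $N = (b_2,\ldots,b_r)$, the ideal generated by $b_2,\ldots,b_r$. The desired conclusion is precisely that the coset $b_1 + N$ is not contained in $\bigcup_{i=1}^n P_i$, since then some element $b_1 + a_2 b_2 + \cdots + a_r b_r$ of the coset lies outside every $P_i$.

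Argue by contradiction: suppose $b_1 + N \subseteq \bigcup_{i=1}^n P_i$. By the coset version of prime avoidance, $b_1 + N \subseteq P_i$ for some $i$. Specialising the parameters $(a_2,\ldots,a_r)$ inside this inclusion then gives the required contradiction. Taking $a_2=\cdots=a_r=0$ yields $b_1 \in P_i$; taking $a_j=1$ for a fixed $j \geq 2$ and all other $a_k = 0$ yields $b_1 + b_j \in P_i$, whence $b_j \in P_i$ as well. Iterating $j$ over $2,\ldots,r$ shows $b_1,\ldots,b_r \in P_i$, contradicting the hypothesis that they are not all contained in any single $P_i$.

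There is no real obstacle here; the only subtlety is to cite (or reprove) the coset form of prime avoidance rather than the standard ideal form, since our object $b_1 + N$ is an affine submodule of $R$ rather than an ideal. If one prefers a self-contained route avoiding the appeal to \cite{BrnHrz1993}, the same statement is easily obtained by induction on $n$: after reducing to the case where the $P_i$ are pairwise incomparable, one inductively finds $(a_2,\ldots,a_r)$ avoiding $P_1,\ldots,P_{n-1}$, and if the resulting $c = b_1 + \sum a_j b_j$ lies in $P_n$ then one perturbs $a_j \mapsto a_j + t$ for a suitable $j \geq 2$ with $b_j \notin P_n$ (such a $j$ must exist, for otherwise $c \equiv b_1 \pmod{P_n}$ would force $b_1 \in P_n$ and hence all $b_k \in P_n$) and a suitable $t \in \bigcap_{i<n} P_i \setminus P_n$, obtained from the standard prime avoidance lemma together with the incomparability of the $P_i$.
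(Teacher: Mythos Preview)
Your proposal is correct and follows exactly the route the paper indicates: the paper omits the proof, remarking only that it follows directly from the coset form of prime avoidance in \cite{BrnHrz1993}, Lemma 1.2.2, which is precisely the reduction you carry out. Your optional self-contained inductive argument is also sound and goes a bit beyond what the paper provides.
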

 
By an elementary matrix over $R$, we will mean a matrix of the form $I+aE(i,j)$ where $I$ is the identity matrix, $a \in R$ and $i \neq j$. Multiplication by an elementary matrix on the left or right of any matrix is equivalent to adding a multiple of a row or column to another. By a triangular matrix over $R$, we will mean a matrix that is either upper or lower triangular (not strictly). Elementary matrices are triangular, as are diagonal matrices.

\begin{lemma}\label{lemma:11entry}
Let $B$ be an $r \times r$ matrix over $R$ such that $det(B)$ is not contained in any  $P_i$ for a finite set of prime ideals $\{P_1,\cdots,P_n\}$ of $R$. Then $B$ can be written as  $B^\prime Q$ where $b^\prime_{11}$ is not in any $P_i$ and $Q$ is a product of elementary matrices.
\end{lemma}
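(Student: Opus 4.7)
The plan is to reduce the claim to a single application of Lemma \ref{lemma:avoidance} via column operations. The key observation is that right-multiplication by an elementary matrix $I + aE(i,j)$ performs the column operation ``add $a$ times column $i$ to column $j$''; since $(I + aE(i,j))^{-1} = I - aE(i,j)$ is again elementary, asking for a factorisation $B = B'Q$ with $Q$ a product of elementary matrices is equivalent to obtaining $B'$ from $B$ by a finite sequence of such column operations.

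First I would note that the entries $b_{11}, b_{12}, \ldots, b_{1r}$ of the first row of $B$ cannot all lie in any single $P_i$: if they did, cofactor expansion of $\det(B)$ along the first row would place $\det(B)$ inside $P_i$, contradicting the hypothesis. Lemma \ref{lemma:avoidance} then supplies elements $a_2, \ldots, a_r \in R$ such that
\[
c \;=\; b_{11} + a_2 b_{12} + \cdots + a_r b_{1r}
\]
avoids every $P_i$.

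Next I would take $Q' = (I + a_2 E(2,1))(I + a_3 E(3,1)) \cdots (I + a_r E(r,1))$, a product of elementary matrices. A direct check (from $(B E(j,1))_{k,l} = B_{kj}\,\delta_{l,1}$) shows that $BQ'$ agrees with $B$ outside the first column, while the first column of $BQ'$ equals the first column of $B$ plus $\sum_{j=2}^{r} a_j \cdot (\text{column } j \text{ of } B)$. In particular the $(1,1)$-entry of $BQ'$ is precisely $c$, which lies in no $P_i$. Setting $B' = BQ'$ and $Q = (Q')^{-1}$, which is itself a product of elementary matrices (the reversed product of the $I - a_j E(j,1)$), gives the desired decomposition $B = B'Q$ with $b'_{11} = c$ outside every $P_i$.

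There is no serious obstacle here: the argument is simply prime avoidance combined with a routine matrix computation. The only bookkeeping one has to trust is the column-operation description of right-multiplication by $I + aE(j,1)$, which is the one-line computation indicated above.
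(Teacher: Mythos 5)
Your proof is correct and follows essentially the same route as the paper: observe that the first-row entries cannot all lie in a single $P_i$ (else $\det(B) \in P_i$), invoke Lemma \ref{lemma:avoidance} to get a combination $c = b_{11} + a_2 b_{12} + \cdots + a_r b_{1r}$ avoiding all $P_i$, form $B'$ by the corresponding column operations, and note $B = B'Q$ with $Q$ a product of elementary matrices. You simply spell out the elementary-matrix bookkeeping that the paper leaves implicit.
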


\begin{proof}
Since $det(B)$ is not contained in any $P_i$, the entries $b_{11},\cdots,b_{1r}$ of the first row of $B$ are not all contained in any $P_i$.
By Lemma~\ref{lemma:avoidance}, there exist $a_2,\cdots,a_r \in R$ such that $b_{11}+a_2b_{12}+\cdots+a_rb_{1r}$ is not contained in any $P_i$. Let $B^\prime$ be obtained from $B$ by adding $a_2$ times its second column to the first, $a_3$ times its third column to the first, and so on up to $a_r$ times its $r$th column to its first. Then $b^\prime_{11} = b_{11}+a_2b_{12}+\cdots+a_rb_{1r}$ is not contained in any $P_i$ and $B$ is of the form $B^\prime Q$ where $Q$ is a product of elementary matrices.
\end{proof}

\begin{proposition}\label{prop:trian}
Let $B$ be an $r \times r$ matrix over $R$ such that $det(B)$ is not contained in any  $P_i$ for a finite set of prime ideals $\{P_1,\cdots,P_n\}$ of $R$. Then there exists $b \in R$ that is not in any $P_i$ such that $bB$ is a product of triangular matrices.
\end{proposition}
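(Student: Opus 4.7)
The plan is to proceed by induction on $r$; the base case $r=1$ is immediate since every $1\times 1$ matrix is triangular, so $b=1$ works. For the inductive step, I would first invoke Lemma \ref{lemma:11entry} to write $B = B'Q$ where $Q$ is a product of elementary matrices (each of which is triangular) and the $(1,1)$ entry $b'_{11}$ of $B'$ lies outside every $P_i$. Since $Q$ is a product of triangular matrices, it suffices to produce $b \notin \bigcup_i P_i$ with $bB'$ a product of triangular matrices.

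Writing $B' = \begin{pmatrix} b'_{11} & w^T \\ c & N \end{pmatrix}$ in block form, with $c$ an $(r-1)\times 1$ column, $w^T$ a $1\times(r-1)$ row, and $N$ of size $(r-1)\times(r-1)$, the central identity --- verified by direct multiplication --- is
\[
b'_{11}\,B' \;=\; \begin{pmatrix} b'_{11} & 0 \\ c & I_{r-1} \end{pmatrix}\begin{pmatrix} 1 & 0 \\ 0 & M \end{pmatrix}\begin{pmatrix} b'_{11} & w^T \\ 0 & I_{r-1} \end{pmatrix},
\]
where $M = b'_{11}N - cw^T$. The outer two factors are already triangular, so the task reduces to triangularizing the central $(r-1)\times(r-1)$ block $M$.

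For the induction I need $\det(M) \notin P_i$ for each $i$. Taking determinants in the displayed identity gives $(b'_{11})^r \det(B') = (b'_{11})^2 \det(M)$, and since each $P_i$ is prime and $b'_{11}$ and $\det(B') = \det(B)$ avoid $P_i$, so does $\det(M)$. By induction applied to $M$ with the same list of primes, there exists $b_1 \notin \bigcup_i P_i$ such that $b_1 M$ is a product of triangular matrices; padding each such factor with a $1$ in the $(1,1)$ position then exhibits $\begin{pmatrix} 1 & 0 \\ 0 & b_1 M \end{pmatrix}$ as a product of triangular matrices, and premultiplying by the diagonal matrix $\begin{pmatrix} b_1 & 0 \\ 0 & I_{r-1} \end{pmatrix}$ does the same for $b_1 \begin{pmatrix} 1 & 0 \\ 0 & M \end{pmatrix}$. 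Plugging this back into the displayed identity expresses $b_1 b'_{11}B'$ as a product of triangular matrices; setting $b = b_1 b'_{11}$, which avoids every $P_i$ by primality, completes the induction.

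The main obstacle is performing LU-style elimination without being able to invert $b'_{11}$. The resolution is to multiply $B'$ by $b'_{11}$ up front so that the elimination can be carried out in $R$; the price is a factor of $(b'_{11})^2$ in the resulting determinant identity, which is harmless because $b'_{11}$ lies outside every prime $P_i$.
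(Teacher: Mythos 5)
Your proof is correct and follows essentially the same strategy as the paper's: invoke Lemma~\ref{lemma:11entry} to reduce to a matrix with $(1,1)$-entry outside every $P_i$, then perform an LU-style elimination after scaling by that entry and induct on the Schur-complement-like block (your $M = b'_{11}N - cw^T$ is precisely the paper's matrix $C$ of $\Delta_{ij}$'s). The only cosmetic difference is that you scale $B'$ by $b'_{11}$ and use a block $LDU$ factorization with $1$'s on most of the diagonal, whereas the paper scales by $b_{11}^r$ and absorbs the extra powers of $b_{11}$ into the two triangular factors; the resulting determinant bookkeeping and inductive step are the same.
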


\begin{proof} We will prove this by induction on $r$, the basis case $r=1$ being clear. For $r>1$, by Lemma \ref{lemma:11entry}, we may assume that $b_{11}$ is not contained in any $P_i$. 
It is then easy to check that $b_{11}^rB$ can be written as
$$
\left[
\begin{array}{ccccc}
b_{11}^{r-1} & 0 & 0 & \cdots & 0\\
b_{11}^{r-2}b_{21} & b_{11}^{r-1} & 0 & \cdots & 0\\
b_{11}^{r-2}b_{31} & 0 & b_{11}^{r-1} & \cdots & 0\\
\vdots & \vdots & \vdots &\ddots & \vdots\\
b_{11}^{r-2}b_{r1} & 0 & 0 & \cdots & b_{11}^{r-1}
\end{array}
\right]
\left[
\begin{array}{ccccc}
b_{11}^2 & b_{11}b_{12} & b_{11}b_{13} & \cdots & b_{11}b_{1r}\\
0 & \Delta_{22} &  \Delta_{23} & \cdots &  \Delta_{2r}\\
0 &  \Delta_{32} &  \Delta_{33} & \cdots &  \Delta_{3r}\\
\vdots & \vdots & \vdots &\ddots & \vdots\\
0 &  \Delta_{r2} &  \Delta_{r3} & \cdots &  \Delta_{rr}
\end{array}
\right],
$$
where $\Delta_{ij} = b_{11}b_{ij}-b_{1j}b_{i1}$. 
It now suffices to write a multiple of the second matrix by some element not in any $P_i$ as a product of triangular matrices.
With $C$ denoting the matrix
$$
\left[
\begin{array}{cccc}
\Delta_{22} &  \Delta_{23} & \cdots &  \Delta_{2r}\\
\Delta_{32} &  \Delta_{33} & \cdots &  \Delta_{3r}\\
 \vdots & \vdots &\ddots & \vdots\\
 \Delta_{r2} &  \Delta_{r3} & \cdots &  \Delta_{rr}
\end{array}
\right],
$$
$det(C) = b_{11}^{r-2} det(B)$ and is not contained in any $P_i$. By induction, there is a $c \in R$ not contained in any $P_i$ such that $cC$ is a product of triangular matrices. Hence,
\begin{eqnarray*}
c
\left[
\begin{array}{cccc}
b_{11}^2 & b_{11}b_{12} &  \cdots & b_{11}b_{1r}\\
0 & \Delta_{22}  & \cdots &  \Delta_{2r}\\
\vdots & \vdots &\ddots & \vdots\\
0 &  \Delta_{r2} &  \cdots &  \Delta_{rr}
\end{array}
\right] &=&
\left[
\begin{array}{ccccc}
1 & 0 &  \cdots & 0\\
0 & c\Delta_{22} &   \cdots &  c\Delta_{2r}\\
\vdots & \vdots &\ddots & \vdots\\
0 &  c\Delta_{r2} &  \cdots &  c\Delta_{rr}
\end{array}
\right]\\
&\times&
\left[
\begin{array}{ccccc}
cb_{11}^2 & cb_{11}b_{12} &  \cdots & cb_{11}b_{1r}\\
0 & 1 &   \cdots &  0\\
\vdots & \vdots &\ddots & \vdots\\
0 &  0 &  \cdots & 1
\end{array}
\right],
\end{eqnarray*}
which is a product of triangular matrices.
\end{proof}

\begin{proof}[Proof of Theorem \ref{thm:comparison}]  
Step I: First we will see that $H_{0}(\phi_1,\cdots,\phi_q) $ vanishes exactly when $H_{0}(det(\phi_1),\cdots,det(\phi_q))$ does. By definition of $K_{\bullet}(\phi_1,\cdots,\phi_q)$, 
\begin{eqnarray*}
H_{0}(\phi_1,\cdots,\phi_q) &=& \frac{F(\phi_1,\cdots,\phi_q)}{(X_1,\cdots, X_q)F(\phi_1,\cdots,\phi_q)}\\
&=& \frac{F_1}{im(\phi_1)} \otimes \cdots \otimes \frac{F_q}{im(\phi_q)}
\end{eqnarray*}
The second equality above is of $R$-modules and holds because, denoting the image of $\phi_k$ by $B_k$, it follows that $X_kF(\phi_1,\cdots,\phi_q)$ is the submodule
$$F_1 \otimes \cdots F_{k-1} \otimes B_k \otimes F_{k+1} \otimes \cdots \otimes F_q$$ of $F(\phi_1,\cdots,\phi_q)$. Hence $(X_1,\cdots, X_q)F(\phi_1,\cdots,\phi_q)$ is the 
sum
\begin{eqnarray*}
(B_1 \otimes F_2 \otimes \cdots \otimes F_q)+ &\cdots& +(F_1 \otimes \cdots \otimes F_{k-1} \otimes B_k \otimes F_{k+1} \otimes \cdots \otimes F_q)\\
 + &\cdots& + \ (F_1 \otimes F_2 \otimes \cdots \otimes F_{q-1} \otimes B_q).
\end{eqnarray*}
Now use that for free $F$ and $G$,
$$
\frac{F}{B} \otimes \frac{G}{C} = \frac{F \otimes G}{B \otimes G + F \otimes C},
$$
and induction ($B \otimes G$ and  $F \otimes C$ are submodules of $F \otimes G$ by freeness).
Thus $H_{0}(\phi_1,\cdots,\phi_q) $ vanishes exactly when some some $\phi_k$ is surjective.
Similarly,
\begin{eqnarray*}
 H_{0}(det(\phi_1),\cdots,det(\phi_q)) &=& \frac{R}{(det(\phi_1),\cdots,det(\phi_q))},
\end{eqnarray*}
and therefore vanishes exactly when some  $det(\phi_k)$ is a unit. 
But, surjectivity of $\phi_k$ and $det(\phi_k)$ being a unit are equivalent conditions. 

Next, suppose that some $\phi_k$ is surjective (and therefore an isomorphism, being a surjective endomorphism of a Noetherian module). Since $K_{\bullet}(\phi_k)$ is a tensor factor of the complex $K_{\bullet}(\phi_1,\cdots,\phi_q)$, it follows easily that all the homology $H_{\bullet}(\phi_1,\cdots,\phi_q)$ vanishes.

Applying  these two observations to the localisations of the two Koszul complexes at every non-maximal prime of $R$, we conclude that $H_{\bullet}(\phi_1,\cdots,\phi_q)$ has finite length if and only if  
$H_{0}(det(\phi_1),\cdots,det(\phi_q))$ does. For the rest of the proof, we will assume that these equivalent conditions hold.

Step II: Let $X$ be the set of all $(\phi_1,\cdots,\phi_q) \in End(F_1) \times \cdots \times End(F_q)$ such that $det(\phi_1),\cdots,det(\phi_q)$ generate an ideal of finite colength in $R$, and let $Y$ be the subset of $X$ of all  $(\phi_1,\cdots,\phi_q)$ such that 
$$
\chi(K_{\bullet}(\phi_1,\cdots,\phi_q)) = \chi(K_{\bullet}(det(\phi_1),\cdots,det(\phi_q)).
$$
 We need to see that $Y=X$ (equivalently that $X \subseteq Y$). We will show the following.
\begin{enumerate}
\item Suppose $(\phi_1,\cdots,\phi_q) \in X$ and that for each $k = 1,2 \cdots, q$, with respect to some basis of $F_k$, the matrix of $\phi_k$ is triangular (we will refer to such $\phi_k$ as triangularisable). Then, $(\phi_1,\cdots,\phi_q) \in Y$.
\item For any $k$, if $\phi_k=\gamma_k\rho_k$ for endomorphisms $\gamma_k,\rho_k$ of $F_k$, and if any two of 
\begin{eqnarray*}
(\phi_1,\cdots,\phi_{k-1}, \phi_k,\phi_{k+1}, \cdots,\phi_d),\\ (\phi_1,\cdots,\phi_{k-1}, \gamma_k,\phi_{k+1}, \cdots,\phi_d),\\(\phi_1,\cdots,\phi_{k-1}, \rho_k,\phi_{k+1}, \cdots,\phi_d),
\end{eqnarray*}
 are in $Y$, then so is the third.
\item If $(\phi_1,\cdots,\phi_q) \in X$ and $1 \leq k \leq q$, then there exists $a_k \in R$ such that $$(\phi_1,\cdots,\phi_{k-1}, a_k\phi_k,\phi_{k+1}, \cdots,\phi_q) \in X$$ and such that $a_k\phi_k$ is a product of triangularisable endomorphisms.
\end{enumerate}

Given (1), (2) and (3), we will finish the proof. Suppose $(\phi_1,\cdots,\phi_q) \in X$. We show by induction on the number of non-triangularisable $\phi_k$'s that $(\phi_1,\cdots,\phi_q) \in Y$. The basis case when this number is $0$ is just (1). Suppose we have shown that whenever there are fewer than $k$ non-triangularisable $\phi_k$'s that $(\phi_1,\cdots,\phi_q) \in Y$. Take a $(\phi_1,\cdots,\phi_q) \in X$ with $k$ non-triangularisable $\phi_k$'s. Suppose, without loss of generality, that $\phi_1$ is non-triangularisable. By (3), find $a_1$ such that $(a_1\phi_1,\phi_2,\cdots,\phi_q) \in X$ and $a_1\phi_1$ being a product of triangularisable endomorphisms. For each of these triangularisable endomorphisms $\tau$, note that $(\tau,\phi_2,\cdots,\phi_q) \in X$  and hence by the inductive hypothesis, $(\tau,\phi_2,\cdots,\phi_q) \in Y$. By (2),
we see that $(a_1\phi_1,\phi_2,\cdots,\phi_q) \in Y$. Hence $(a_11_{F_1},\phi_2,\cdots,\phi_q) \in X$ and therefore again by the inductive hypothesis, $(a_11_{F_1},\phi_2,\cdots,\phi_q) \in Y$. By (2) again, we conclude that $(\phi_1,\cdots,\phi_q) \in Y$ as needed. We will now prove (1), (2) and (3).

Step III: Proof of (1): The proof is by induction on the sum of ranks $r_1+\cdots+r_d$. If this sum is $d$, then the statement is a tautology. For the inductive step, assume without loss of generality that $r_1 \geq 2$. Choose a basis of $F_1$ with respect to which the matrix of $\phi_1$ is upper triangular. 
Take the direct sum decomposition of $F_1$ as $G_1 \oplus H_1$ where $G_1$ is spanned by the first basis element and $H_1$ by the rest. 
Let $\psi_1$ and $\theta_1$ be the compressions of $\psi_1$ to $G_1$ and $H_1$, so that they are given by the composite maps
\begin{center}
\begin{tikzcd}
 G_1 \arrow[r]  &  G_1 \oplus H_1   \arrow[r,"\phi_1"]  & G_1 \oplus H_1   \arrow[r]  &G_1, \\
H_1 \arrow[r] &  G_1 \oplus H_1    \arrow[r,"\phi_1"] & G_1 \oplus H_1   \arrow[r]  & H_1.
\end{tikzcd}
\end{center}
The $R[X_1,\cdots,X_q]$-modules 
\begin{eqnarray*}
F(\psi_1,\phi_2,\phi_3,\cdots,\phi_q) &=& G_1 \otimes F_2 \otimes \cdots \otimes F_q{\text {~~\ and}}\\
 F(\theta_1,\phi_2,\phi_3,\cdots,\phi_q) &=& H_1 \otimes F_2 \otimes \cdots \otimes F_q
 \end{eqnarray*}
fit in a short exact sequence of $R[X_1,\cdots,X_q]$-modules
$$
0 \rightarrow F(\psi_1,\phi_2,\phi_3,\cdots,\phi_q) \rightarrow F(\phi_1,\cdots,\phi_q) \rightarrow F(\theta_1,\phi_2,\phi_3,\cdots,\phi_q) \rightarrow 0,
$$
where the first map is induced by the inclusion of $G_1$ into $F_1$ and the second by the projection of $F_1$ onto $H_1$.
Tensoring this (over $R[X_1,\cdots,X_q]$) with the Koszul complex of $(X_1,\cdots,X_q)$ over $R[X_1,\cdots,X_q]$ gives a short exact sequence of chain complexes
$$
0 \rightarrow K_{\bullet}(\psi_1,\phi_2,\phi_3,\cdots,\phi_q) \rightarrow K_{\bullet}(\phi_1,\cdots,\phi_q)  \rightarrow K_{\bullet}(\theta_1,\phi_2,\phi_3,\cdots,\phi_q) \rightarrow 0.
$$

Since  $(\phi_1,\cdots,\phi_q) \in X$ and $det(\phi_1) = det(\psi_1)det(\theta_1)$, both $(\psi_1,\phi_2,\phi_3,\cdots,\phi_q)$ and $(\theta_1,\phi_2,\phi_3,\cdots,\phi_q)$
are also in $X$. Further $\psi_1$ and $\theta_1$ are triangular with respect to the chosen bases of $G_1$ and $H_1$. So by induction on $r_1+\cdots+r_d$, both $(\psi_1,\phi_2,\phi_3,\cdots,\phi_q)$ and $(\theta_1,\phi_2,\phi_3,\cdots,\phi_q)$
are  in $Y$ and so 
\begin{eqnarray*}
\chi(K_{\bullet}(\psi_1,\phi_2,\phi_3,\cdots,\phi_q)) &=& \chi(K_{\bullet}(det(\psi_1),det(\phi_2),det(\phi_3),\cdots,det(\phi_q)),\\
\chi(K_{\bullet}(\theta_1,\phi_2,\phi_3,\cdots,\phi_q)) &=& \chi(K_{\bullet}(det(\theta_1),det(\phi_2),det(\phi_3),\cdots,det(\phi_q)).
\end{eqnarray*}
It follows from the short exact sequence of complexes that
\begin{eqnarray*}
\chi(K_{\bullet}(\phi_1,\cdots,\phi_q)) &=& \chi(K_{\bullet}(det(\psi_1),det(\phi_2),det(\phi_3),\cdots,det(\phi_q))\\
 &+& \chi(K_{\bullet}(det(\theta_1),det(\phi_2),det(\phi_3),\cdots,det(\phi_q)).
\end{eqnarray*}

Applying the above to the case when all $F_k$ are $R$ and the endomorphisms $\psi_1,\theta_1,\phi_1,\cdots, \phi_q$ are replaced by their determinants, we see that
\begin{eqnarray*}
\chi(K_{\bullet}(det(\phi_1),\cdots,det(\phi_q))) &=& \chi(K_{\bullet}(det(\psi_1),det(\phi_2),det(\phi_3),\cdots,det(\phi_q))\\
 &+& \chi(K_{\bullet}(det(\theta_1),det(\phi_2),det(\phi_3),\cdots,det(\phi_q)).
\end{eqnarray*}

Comparing the previous two equalities shows that $(\phi_1,\cdots,\phi_q) \in Y$.

Step IV: Proof of (2): The proof of this step is motivated by the proof of Proposition 5.2.3 in \cite{Rbr1998}. We may assume without loss of generality that $k=1$. Begin by noting that if any two of $(\phi_1,\phi_2,\cdots,\phi_q), (\gamma_1,\phi_2,\cdots,\phi_q)$ and $(\rho_1,\phi_2,\cdots,\phi_q)$
 are in $X$, then so is the third. Since some two are in $Y$, all three are in $X$.

It will suffice to see that
$$
\chi(K_{\bullet}(\phi_1,\phi_2,\cdots,\phi_q)) = \chi(K_{\bullet}(\gamma_1,\phi_2,\cdots,\phi_q))+\chi(K_{\bullet}(\rho_1,\phi_2,\cdots,\phi_q))
$$ for then, for the same reasons,
\begin{eqnarray*}
\chi(K_{\bullet}(det(\phi_1),det(\phi_2),\cdots,det(\phi_q))) &=& \chi(K_{\bullet}(det(\gamma_1),det(\phi_2),\cdots,det(\phi_q)))\\
&+& \chi(K_{\bullet}(det(\rho_1),det(\phi_2),\cdots,det(\phi_q))).
\end{eqnarray*}
Then, equality of any two of the three pairs of corresponding terms in the equations above implies the equality of the third pair, as needed.

Consider the endomorphism $\phi$ of $F_1 \oplus F_1$ given by 
$$
\phi =
\left[
\begin{array}{cc}
\rho_1 & 1_{F_1}\\
0 & \gamma_1
\end{array}
\right]
$$
and the $R[X_1,\cdots,X_q]$-module $F(\phi,\phi_2,\cdots,\phi_q)$, which, as an $R$-module is $$(F_1 \oplus F_1) \otimes F_2 \otimes \cdots \otimes F_q.$$ We claim that this fits into 
a short exact sequence of $R[X_1,\cdots,X_q]$-modules given by:
\begin{center}
\small{
\begin{tikzcd}
 0  \arrow[r]  & F(\rho_1,\phi_2,\cdots,\phi_q)  \arrow[r,"\spmat{ 1_{F_1}\\0 }"]  &  F(\phi,\phi_2,\cdots,\phi_q)   \arrow[r,"\spmat{ 0 & 1_{F_1} }"]  & F(\gamma_1,\phi_2,\cdots,\phi_q)    \arrow[r]  & 0,
\end{tikzcd}
}
\end{center}
where the matrices above the arrows indicate maps $F_1 \rightarrow F_1 \oplus F_1$ and $F_1 \oplus F_1 \rightarrow F_1$. The actual maps are the tensor product of these with the identity maps on the rest of the factors of the tensor products. While it is clear that this is a short exact sequence of $R$-modules, that the maps involved are $R[X_1,\cdots,X_q]$-module maps reduces quickly to  the commutativity of the diagram below.

\begin{center}
\begin{tikzcd}
  F_1 \arrow[r,"\spmat{ 1_{F_1}\\0 }"]  \arrow[d,"\rho_1"'] &  (F_1 \oplus F_1)  \arrow[r,"\spmat{ 0 & 1_{F_1} }"] \arrow[d,"\spmat{ \rho_1 & 1_{F_1}\\0 & \gamma_1 }"] & F_1 \arrow[d,"\gamma_1"],\\
 F_1 \arrow[r,"\spmat{ 1_{F_1}\\0 }"']  &  (F_1 \oplus F_1)  \arrow[r,"\spmat{ 0 & 1_{F_1} }"']  & F_1
\end{tikzcd}
\end{center}

Tensoring the short exact sequence of $R[X_1,\cdots,X_q]$-modules above by the Koszul complex $K_{\bullet}(X_1,\cdots,X_q)$ (over $R[X_1,\cdots,X_q]$) gives the short exact sequence of complexes
$$
0 \rightarrow K_{\bullet}(\rho_1,\phi_2,\phi_3,\cdots,\phi_q) \rightarrow K_{\bullet}(\phi,\phi_2\cdots,\phi_q)  \rightarrow K_{\bullet}(\gamma_1,\phi_2,\phi_3,\cdots,\phi_q) \rightarrow 0.
$$
Since the $d$-tuples of endomorphisms above are all in $X$, we conclude that
$$
\chi(K_{\bullet}(\phi,\phi_2,\cdots,\phi_q)) = \chi(K_{\bullet}(\rho_1,\phi_2,\cdots,\phi_q))+\chi(K_{\bullet}(\gamma_1,\phi_2,\cdots,\phi_q)).
$$

Next, we will see that $K_{\bullet}(\phi,\phi_2\cdots,\phi_q)$ is quasi-isomorphic to $K_{\bullet}(\phi_1,\phi_2\cdots,\phi_q)$ as a complex of $R$-modules. Begin with the commutative diagram of $R$-modules with exact rows:

\begin{center}
\begin{tikzcd}
& 0 \arrow[d] & 0 \arrow[d]& 0 \arrow[d]&\\
0   \arrow[r]    & F_1   \arrow[r,"\spmat{ -1_{F_1}\\ \rho_1}"]  \arrow[d,"\phi_1 = \gamma_1\rho_1"']  &  (F_1 \oplus F_1)  \arrow[r,"\spmat{ \rho_1  & 1_{F_1} }"]   \arrow[d,"\spmat{ \rho_1 & 1_{F_1}\\0 & \gamma_1 }"] & F_1  \arrow[d,"1_{F_1}"]  \arrow[r] & 0\\
 0   \arrow[r]    & F_1 \arrow[d]  \arrow[r,"\spmat{ 0\\ 1_{F_1}}"']  &  (F_1 \oplus F_1)  \arrow[r,"\spmat{ 1_{F_1} & 0}"']  \arrow[d] & F_1  \arrow[r] \arrow[d]& 0,\\
 & 0 & 0 & 0 &
\end{tikzcd}
\end{center}
which can be regarded as a short exact sequence of complexes of $R$-modules
$$
0  \rightarrow K_{\bullet}(\phi_1) \rightarrow K_{\bullet}(\phi) \rightarrow K_{\bullet}(1_{F_1}) \rightarrow 0.
$$
Tensoring this with the complex $K_{\bullet}(\phi_2) \otimes \cdots \otimes  K_{\bullet}(\phi_q)$ of free $R$-modules, exactness is preserved and yields the short exact sequence of complexes
$$
0  \rightarrow K_{\bullet}(\phi_1, \phi_2,\cdots,\phi_q) \rightarrow K_{\bullet}(\phi,\phi_2,\cdots,\phi_q) \rightarrow K_{\bullet}(1_{F_1}, \phi_2,\cdots,\phi_q) \rightarrow 0.
$$
Since the last complex is exact, the chain map between the first two is a quasi-isomorphism and therefore
$$
\chi(K_{\bullet}(\phi,\phi_2,\cdots,\phi_q)) = \chi(K_{\bullet}(\phi_1,\phi_2,\cdots,\phi_q)),
$$
finishing the proof.

Step V: Proof of (3): Suppose for simplicity that $k=1$. If $det(\phi_1)$ is a unit then it is easy to see that we may take $a_1=1$.
Set $J = (det(\phi_2),\cdots,det(\phi_q))$. If $J$ is of finite colength in $R$, then by Proposition \ref{prop:trian} (taking the set of primes to be the empty set) we can find an $a_1 \in R$ such that $a_1\phi_1$ is a product of triangularisable endomorphisms.  Further, $(det(a_1\phi_1),det(\phi_2),\cdots,det(\phi_q)) =  (det(a_1\phi_1))+J$ which is of finite colength in $R$, so that $(a_1\phi_1,\phi_2,\cdots,\phi_q) \in X$.

If neither of the previous two cases hold, then $(det(\phi_1),\cdots,det(\phi_q))$ is an ${\mathfrak m}$-primary ideal of $R$ and so $J$ is of height $d-1$.
Take $\{P_1,\cdots,P_n\}$ to be the set of minimal primes of $J$ and apply Proposition \ref{prop:trian}  to get $a_1 \in R$ not in any $P_i$ such that $a_1\phi_1$ is a product of triangularisable endomorphisms. We want that $det(a_1\phi_1)+J = a_1^{r_1}det(\phi_1)+J$ to be of finite colength. But this does hold since both $(a_1)+J$ and  $(det(\phi_1))+J$ are ${\mathfrak m}$-primary.

Step VI: When $R$ is Cohen-Macaulay, we need to see that $K_{\bullet}(\phi_1,\cdots,\phi_d)$ is acyclic. This follows immediately from the Acyclicity Lemma of Peskine-Szpiro, since, the complex has length $d$, the modules in the complex have depth $d$ and the homology modules in the complex are zero or have finite length. (See Lemma 1.8 in \cite{PskSzp1973}). 
 \end{proof}
 
The following theorem  is our main motivation for proving Theorem \ref{thm:comparison}.

\begin{theorem}\label{cor:comparison}
Let $(R,{\mathfrak m},k)$ be a Noetherian local ring of  positive  dimension $d$ with infinite residue field. 
Let $M_1 \subseteq F_1,M_2 \subseteq F_2, \cdots,$ $M_d \subseteq F_d$ be $R$-submodules  of  finite colength in free   $R$-modules $F_1,\cdots,F_d$ of ranks $r_1,\cdots,r_d$.
Let $I_1,\cdots,I_d$ be the ideals of maximal minors of $M_1,\cdots,M_d$. Then
$$
br(M_1|\cdots|M_d) = e(I_1|\cdots|I_d).
$$
\end{theorem}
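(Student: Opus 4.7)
The plan is to chain together results developed in Sections 1, 3, and 4, together with Rees's mixed-multiplicity theorem from \cite{Res1984}. First I would dispose of the edge case where some $M_k = F_k$ (equivalently $I_k = R$). In this case $S_{n_k}(M_k) = S_{n_k}(F_k)$, so the joint Buchsbaum-Rim function $\lambda\!\left(S_{n_1}(F_1)\cdots S_{n_d}(F_d)/S_{n_1}(M_1)\cdots S_{n_d}(M_d)\right)$ is independent of $n_k$; this forces the coefficient of $n_1^{r_1}\cdots n_d^{r_d}$ in the joint Buchsbaum-Rim polynomial to vanish (as $r_k \geq 1$). Similarly $\lambda(R/I_1^{n_1}\cdots I_d^{n_d})$ is independent of $n_k$ when $I_k = R$, so $e(I_1|\cdots|I_d) = 0$. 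Both sides are zero and the statement is trivial.

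Assume henceforth that each $M_k \subsetneq F_k$, so each $I_k$ is $\m$-primary. By Proposition \ref{prop:exist} (applied with $q=d$), we may pick a joint reduction $(B_1,\ldots,B_d)$ of $(M_1,\ldots,M_d)$ with each $B_k$ generated by $r_k$ elements. Choose endomorphisms $\phi_k \colon F_k \to F_k$ with $\mathrm{im}(\phi_k) = B_k$; then $det(\phi_k)$ agrees with $det(B_k)$ up to a unit. Theorem \ref{thm:brmultepchar} gives
\[
br(M_1|\cdots|M_d) \;=\; \chi(K_{\bullet}(\phi_1,\ldots,\phi_d)).
\]

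Next, by Theorem \ref{thm:equiv} the tuple $(det(B_1),\ldots,det(B_d))$ is a joint reduction of $(I_1,\ldots,I_d)$, so in particular these determinants generate an $\m$-primary ideal of $R$. Hence Theorem \ref{thm:comparison} applies and yields
\[
\chi(K_{\bullet}(\phi_1,\ldots,\phi_d)) \;=\; \chi(K_{\bullet}(det(\phi_1),\ldots,det(\phi_d))).
\]
Since $(det(\phi_1),\ldots,det(\phi_d))$ is then a system of parameters for $R$, Serre's formula identifies the right-hand side with the Hilbert-Samuel multiplicity of the ideal $(det(\phi_1),\ldots,det(\phi_d))$ in $R$.

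Finally, by Rees's theorem from \cite{Res1984} recalled in the introduction, for any joint reduction $(x_1,\ldots,x_d)$ of $\m$-primary ideals $(I_1,\ldots,I_d)$ one has $e(I_1|\cdots|I_d) = e((x_1,\ldots,x_d))$. Applying this with $x_k = det(B_k)$ closes the chain and produces the claimed equality. The proof requires no new ideas beyond assembling results already in hand; the only point requiring care is to verify the hypotheses of Theorems \ref{thm:brmultepchar} and \ref{thm:comparison}, both of which are guaranteed by Theorem \ref{thm:equiv} applied to the chosen joint reduction. Accordingly, I foresee no serious obstacle.
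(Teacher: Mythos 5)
Your proposal is correct and follows the same chain as the paper's proof: Theorem~\ref{thm:brmultepchar} gives $br(M_1|\cdots|M_d) = \chi(K_{\bullet}(\phi_1,\ldots,\phi_d))$, Theorem~\ref{thm:comparison} converts this to $\chi(K_{\bullet}(det(\phi_1),\ldots,det(\phi_d)))$, and the last step identifies that quantity with $e(I_1|\cdots|I_d)$. The only difference is cosmetic: in the final step you invoke Serre's formula (Euler characteristic of the Koszul complex on a system of parameters equals the Hilbert--Samuel multiplicity) together with Rees's theorem on joint reductions, whereas the paper simply applies Theorem~\ref{thm:brmultepchar} a second time in the rank-one case, which, as the paper itself remarks after Proposition~\ref{prop:nlargemult}, reduces precisely to that classical combination. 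Your explicit disposal of the edge case $M_k = F_k$ is a worthwhile addition that the paper leaves implicit, since Theorem~\ref{thm:brmultepchar} is stated under the hypothesis $M_k \subsetneq F_k$ and the theorem statement here does not exclude colength zero.
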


\begin{proof} Let $(B_1,\cdots,B_d)$ be a joint reduction of $(M_1,\cdots,M_d)$. By Theorem \ref{thm:brmultepchar} the mixed Buchsbaum-Rim multiplicity
$$
br(M_1|\cdots|M_d) = \chi(K_{\bullet}(\phi_1,\phi_2,\cdots,\phi_d))
$$
where $\phi_k$ are endomorphisms of $F_k$ with images $B_k$. By Theorem \ref{thm:comparison}
$$
 \chi(K_{\bullet}(\phi_1,\phi_2,\cdots,\phi_d)) =  \chi(K_{\bullet}(det(\phi_1),det(\phi_2),\cdots,det(\phi_d))).
$$
By Theorem \ref{thm:equiv}, $det(\phi_1),det(\phi_2),\cdots,det(\phi_d)$ form a joint reduction of $I_1,\cdots,I_d$ and so 
$$
e(I_1|\cdots|I_d) =  \chi(K_{\bullet}(det(\phi_1),det(\phi_2),\cdots,det(\phi_d))),
$$
again by Theorem \ref{thm:brmultepchar}. The three equalities together finish the proof.
\end{proof}

\section{The Hoskin-Deligne formula and reduction-number-zero again}

In this section, we give a different proof of Theorem \ref{thm:jtred0} using the Hoskin-Deligne length formula and our results on the joint Buchsbaum-Rim multiplicity. As in \S2, throughout this section, $(R,{\mathfrak m},k)$ will be a two-dimensional regular local ring with infinite residue field.

To motivate the proof for integrally closed modules, we first give a proof for ${\mathfrak m}$-primary integrally closed ideals in the following proposition. Many of the results in the previous two sections were developed precisely in order to generalise this proof.

\begin{proposition}
If $I$ and $J$ are ${\mathfrak m}$-primary integrally closed ideals of $R$ with joint reduction $(a,b)$ then, $IJ = aJ+bI$.
\end{proposition}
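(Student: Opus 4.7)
The plan is to compute $\lambda(R/(aJ+bI))$ and $\lambda(R/IJ)$ independently, show they coincide, and conclude $aJ+bI=IJ$ from the tautological inclusion $aJ+bI\subseteq IJ$.

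For $\lambda(R/(aJ+bI))$: since $(a,b)$ is a joint reduction of the ${\mathfrak m}$-primary ideals $I$ and $J$, iterating the joint-reduction equation gives $(IJ)^{n+1} \subseteq (aJ+bI)(IJ)^n$, so $(a,b)$ is a reduction of $IJ$, hence ${\mathfrak m}$-primary, and $a,b$ is a system of parameters --- hence a regular sequence --- in the two-dimensional Cohen-Macaulay ring $R$. I would consider the short exact sequence
\[ 0 \longrightarrow (a,b)/(aJ+bI) \longrightarrow R/(aJ+bI) \longrightarrow R/(a,b) \longrightarrow 0 \]
together with the surjection $\phi\colon R\oplus R \to (a,b)/(aJ+bI)$, $(r,s)\mapsto ar+bs$. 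Using the vanishing of Koszul homology $H_1(a,b;R)=0$ and the fact that $a\in I,\, b\in J$, a direct calculation identifies $\ker\phi$ with $J\oplus I$, whence $(a,b)/(aJ+bI)\cong R/J\oplus R/I$. Combined with Rees's identity $\lambda(R/(a,b))=e(a,b)=e(I|J)$ (a special case of Theorem \ref{cor:comparison}), this yields
\[ \lambda(R/(aJ+bI)) = e(I|J) + \lambda(R/I) + \lambda(R/J). \]

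For $\lambda(R/IJ)$: the ideal $IJ$ is integrally closed by Zariski's product theorem, and $ord_S((IJ)^S) = ord_S(I^S) + ord_S(J^S)$ at every two-dimensional regular local ring $S$ infinitely near to $R$ (multiplicativity of order in the associated graded polynomial ring). Applying the Hoskin-Deligne formula to $I$, $J$ and $IJ$ and using the identity $\binom{a+b+1}{2}=\binom{a+1}{2}+\binom{b+1}{2}+ab$ termwise gives
\[ \lambda(R/IJ)=\lambda(R/I)+\lambda(R/J)+\sum_S ord_S(I^S)\,ord_S(J^S)\,[k_S:k]. \]
The last sum is $e(I|J)$: applying Hoskin-Deligne to $I^rJ^s$ (integrally closed for all $r,s\geq 0$) expresses the Bhattacharya function as $\lambda(R/I^rJ^s)=\sum_S\binom{r\,ord_S(I^S)+s\,ord_S(J^S)+1}{2}[k_S:k]$, whose $rs$-coefficient is by definition the mixed multiplicity $e(I|J)$. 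Hence $\lambda(R/IJ)=e(I|J)+\lambda(R/I)+\lambda(R/J)=\lambda(R/(aJ+bI))$, and equality of finite colengths combined with $aJ+bI\subseteq IJ$ forces $aJ+bI=IJ$.

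The main obstacle is securing the Hoskin-Deligne formula in the form needed, namely that the Bhattacharya function $\lambda(R/I^rJ^s)$ equals its Hilbert polynomial already in each degree $(r,s)$ and is given by the Hoskin-Deligne sum; this is classical for two-dimensional regular local rings and is the section's eponymous tool.
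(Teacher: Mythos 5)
Your proof is correct and follows essentially the same route as the paper: both compute $\lambda(R/(aJ+bI))$ by decomposing $(a,b)/(aJ+bI)\cong R/I\oplus R/J$ via the regular-sequence (equivalently, $H_1(a,b;R)=0$) argument, both invoke $\lambda(R/(a,b))=e(I|J)=\sum_T o_T(I^T)o_T(J^T)[T:R]$, and both conclude via the Hoskin--Deligne formula together with the binomial identity $\binom{a+b+1}{2}=\binom{a+1}{2}+\binom{b+1}{2}+ab$. The one minor quibble is attribution: the identity $\lambda(R/(a,b))=e(I|J)$ is Rees's joint-reduction theorem for ideals (plus the Cohen--Macaulay fact that colength of a parameter ideal equals its multiplicity), not a special case of Theorem~\ref{cor:comparison}, which concerns modules and is proved later using this very proposition as motivation.
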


\begin{proof}
Step I: There is an natural isomorphism 
$$
\frac{R}{I} \oplus \frac{R}{J} \rightarrow \frac{(a,b)}{aJ+bI}.
$$
The map is given by $(\overline{r},\overline{s}) \mapsto \overline{sa+rb}$, which is clearly well-defined and surjective. To show injectivity, suppose that
$sa+rb \in aJ+bI$. Then $sa+rb = ja+ib$ for some $i \in I$, $j \in J$. Hence, $a(s-j) = b(i-r)$ and since $a,b$ form a regular sequence, there is a $t \in R$ such that $s-j =tb$ and $i-r =ta$. So $s \in J, r \in I$ as needed and hence
$$
\lambda\left( \frac{R}{aJ+bI}\right)  = \lambda \left( \frac{R}{(a,b)}\right) + \lambda \left(\frac{R}{I}\right) + \lambda\left(\frac{R}{J}\right).
$$
Step II: By the Hoskin-Deligne length formula for ideals (see \cite{JhnVrm1992}, Theorem 3.10)
$$
\lambda\left(\frac{R}{I}\right) + \lambda\left(\frac{R}{J}\right) =  \sum_T \left\{ \binom{o_T(I^T)+1}{2} + \binom{o_T(J^T)+1}{2}\right\} [T:R].
$$
Here, the sum is over all quadratic transforms $T$ of $R$, the ideals $I^T$ and $J^T$ are the transforms of $I$ and $J$ in $T$, the notation $o_T(\cdot)$ is the order valuation of $T$ and $[T:R]$ is the degree of the residue field extension of $T$ over $R$.
Similarly
$$
\lambda\left(\frac{R}{IJ}\right) =  \sum_T \binom{o_T(I^T)+o_T(J^T)+1}{2} [T:R]
$$
Step III: Another consequence of the Hoskin-Deligne length formula is the following formula for the mixed multiplicity $e(I|J)$ of $I$ and $J$ (see \cite{JhnVrm1992}, Theorem 3.7)
$$
\lambda \left(\frac{R}{(a,b)}\right) = e(I|J) = \sum_T o_T(I^T)o_T(J^T) [T:R]
$$
Step IV: At every $T$, we have the elementary numerical identity
$$
\binom{o_T(I^T)+o_T(J^T)+1}{2}  = \binom{o_T(I^T)+1}{2} + \binom{o_T(J^T)+1}{2} + o_T(I^T)o_T(J^T).
$$

Putting everything together, $\lambda\left( \frac{R}{aJ+bI}\right) = \lambda\left(\frac{R}{IJ}\right)$ and so $IJ = aJ+bI$.
\end{proof} 

Before proving an analogue of Step I for modules in Proposition \ref{prop:step1analogue}, we need a Tor computation.

\begin{lemma} \label{lem:torcomp} Let $B_1 \subseteq F_1$, $B_2 \subseteq F_2$  all be free $R$-modules with $rank(B_i) = rank(F_i) = r_i$. Suppose that $det(B_1),det(B_2)$ generate an ideal of finite colength  in $R$. Then $Tor_i^R(F_1/B_1,F_2/B_2)$ vanishes for $i \geq 1$.
\end{lemma}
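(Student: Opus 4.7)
\textit{Proof proposal.} My plan is to recognize each two-term complex $K_\bullet(\phi_i)$ as a free resolution of $F_i/B_i$, identify the tensor product $K_\bullet(\phi_1)\otimes_R K_\bullet(\phi_2)$ with $K_\bullet(\phi_1,\phi_2)$, and then invoke the acyclicity statement at the end of Theorem \ref{thm:comparison}.

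First I would let $\phi_i : F_i \to F_i$ denote the endomorphism whose matrix (in a chosen basis of $F_i$) is $B_i$. Since $\det(B_1)$ and $\det(B_2)$ generate an $\mathfrak{m}$-primary ideal in the two-dimensional Cohen-Macaulay ring $R$, neither determinant vanishes, so each $\phi_i$ is injective. Hence the two-term complex $K_\bullet(\phi_i) : 0 \to F_i \xrightarrow{\phi_i} F_i \to 0$ is a free resolution of $F_i/B_i$ of length one.

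Next, I would compute the Tor modules via these resolutions. Using $K_\bullet(\phi_1)$ as a free resolution of $F_1/B_1$,
$$\operatorname{Tor}_i^R(F_1/B_1,\, F_2/B_2) \;=\; H_i\bigl(K_\bullet(\phi_1)\otimes_R (F_2/B_2)\bigr).$$
Since $K_\bullet(\phi_2)$ is a free resolution of $F_2/B_2$ and $K_\bullet(\phi_1)$ consists of free modules, the standard balanced-Tor argument (spectral sequence of the double complex, or an elementary replacement argument in length one) identifies this with the homology of
$$K_\bullet(\phi_1)\otimes_R K_\bullet(\phi_2) \;=\; K_\bullet(\phi_1,\phi_2).$$

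Finally, I would apply Theorem \ref{thm:comparison} with $q = d = 2$: the ring $R$ is Cohen-Macaulay of dimension two (being regular), and $(\det(\phi_1),\det(\phi_2))$ has finite colength by hypothesis, so the theorem asserts that $K_\bullet(\phi_1,\phi_2)$ is acyclic in positive degrees. This yields $\operatorname{Tor}_i^R(F_1/B_1, F_2/B_2) = 0$ for $i \geq 1$, as desired. There is no serious obstacle once Theorem \ref{thm:comparison} is in hand; the only point requiring a small check is the injectivity of each $\phi_i$ so that $K_\bullet(\phi_i)$ really is a resolution, and this is immediate from the finite-colength hypothesis on $(\det(\phi_1),\det(\phi_2))$.
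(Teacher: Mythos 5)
Your proof is correct, but it takes a genuinely different route from the paper's. The paper writes out the total tensor-product complex explicitly as a three-term complex with differentials $\partial_2 = \spmat{I_{r_1}\otimes B_2\\ -B_1\otimes I_{r_2}}$ and $\partial_1 = \spmat{B_1\otimes I_{r_2} ~&~ I_{r_1}\otimes B_2}$, observes that the ideals of maximal minors of each differential contain $\det(B_1)^{r_2}$ and $\det(B_2)^{r_1}$ (hence have depth $\geq 2$), and concludes acyclicity directly from the Buchsbaum-Eisenbud exactness criterion. You instead identify the double-complex totalization with $K_\bullet(\phi_1,\phi_2)$ and import the acyclicity assertion at the end of Theorem~\ref{thm:comparison}, whose proof (Step VI) rests on the Peskine--Szpiro acyclicity lemma. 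Since Theorem~\ref{thm:comparison} is established in \S4, before Lemma~\ref{lem:torcomp}, there is no circularity, and your argument is clean and short; the paper's version has the modest advantage of being self-contained and not requiring the full strength of Theorem~\ref{thm:comparison}. One small caveat in your write-up: the injectivity of $\phi_i$ is better justified by the hypothesis that $B_i$ is free of full rank $r_i$ inside the rank-$r_i$ free module $F_i$ (which forces $\det(B_i)\neq 0$ over the domain $R$), rather than by the finite-colength hypothesis alone — ``finite colength'' permits the ideal $(\det B_1,\det B_2)$ to equal $R$, in which case it need not obviously exclude a single vanishing determinant; the freeness hypothesis does.
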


\begin{proof} The tensor product of the two complexes
\begin{center}
\begin{tikzcd}
 0  \arrow[r] & F_1 \arrow[r,"B_1"] & F_1   \arrow[r]  & 0, \\
 0  \arrow[r] & F_2\arrow[r,"B_2"] & F_2   \arrow[r]  & 0
\end{tikzcd}
\end{center}
resolving $\frac{F_1}{B_1}$ and $\frac{F_2}{B_2}$ computes  $Tor_*^R(\frac{F_1}{B_1},\frac{F_2}{B_2})$. Explicitly, this complex is given by

$$
0 \rightarrow F_1 \otimes F_2 \stackrel{\partial_2}{\longrightarrow} (F_1 \otimes F_2) \oplus  (F_1 \otimes F_2) \stackrel{\partial_1}{\longrightarrow} F_1 \otimes F_2 \rightarrow 0,
$$
where the matrices of the maps are given by:
$$
\partial_2 = 
\left[
\begin{array}{r}
I_{r_1} \otimes B_2\\
-B_1 \otimes I_{r_2}
\end{array}
\right]
{\text {~and~}}
\partial_1 = 
\left[
\begin{array}{rr}
B_1 \otimes I_{r_2} & I_{r_1} \otimes B_2\\
\end{array}
\right].
$$
The ideals of maximal minors of both these matrices contain $det(B_1)^{r_2}$ and $det(B_2)^{r_1}$ and so by the Buchsbaum-Eisenbud exactness criterion, this complex resolves the module
$\frac{F_1}{B_1} \otimes \frac{F_2}{B_2} = \frac{F_1F_2}{B_1F_2+F_1B_2}$. In particular, the higher Tors of $\frac{F_1}{B_1}$ and $\frac{F_2}{B_2}$ vanish.
\end{proof}

\begin{proposition} \label{prop:step1analogue} Let $M_1 \subseteq F_1$ of rank $r_1$, $M_2 \subseteq F_2$ of rank $r_2$ be integrally closed of finite non-zero colength with joint reduction $(B_1,B_2)$. 
Then, the  natural map (direct sum of the natural inclusions)
$$
\left(  B_1 \otimes  F_2 \right)  \oplus \left( F_1 \otimes B_2 \right)  \rightarrow F_1 \otimes F_2 = F_1F_2%
$$
has image $B_1F_2+F_1B_2$. The composite map
$$
\left(  B_1 \otimes  F_2 \right)  \oplus \left( F_1 \otimes B_2 \right)  \rightarrow B_1F_2+F_1B_2 \rightarrow \frac{B_1F_2+F_1B_2}{B_1M_2+M_1B_2}.
$$
has kernel $\left(  B_1 \otimes  M_2 \right)  \oplus \left( M_1 \otimes B_2 \right)$. In particular,
$$
r_1 \lambda\left(\frac{F_2}{M_2}\right) + r_2 \lambda\left(\frac{F_1}{M_1}\right) = \lambda\left( \frac{B_1F_2+F_1B_2}{B_1M_2+M_1B_2}\right).
$$

\end{proposition}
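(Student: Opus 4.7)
The plan is to deduce everything from the Tor vanishing of Lemma \ref{lem:torcomp}. Since $B_1, F_1, B_2, F_2$ are all free $R$-modules (by Theorem \ref{thm:properties}, together with $R$ regular so positive depth) and $\det(B_1), \det(B_2)$ generate an $\mathfrak{m}$-primary ideal (by Theorem \ref{thm:equiv}, as $(B_1,B_2)$ is a joint reduction), the hypotheses of Lemma \ref{lem:torcomp} are met. The first assertion is immediate: flatness of $F_2$ (resp.\ $F_1$) identifies $B_1 \otimes F_2$ with the submodule $B_1 F_2 \subseteq F_1 F_2$ (resp.\ $F_1 \otimes B_2$ with $F_1 B_2$), so the image of the direct sum map is exactly $B_1F_2 + F_1B_2$.

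The heart of the argument is identifying the kernel. I would form the tensor product of the two length-one free resolutions
$$0 \to B_1 \hookrightarrow F_1 \to F_1/B_1 \to 0, \qquad 0 \to B_2 \hookrightarrow F_2 \to F_2/B_2 \to 0.$$
Its total complex
$$0 \to B_1 \otimes B_2 \xrightarrow{z \mapsto (z, -z)} (B_1 \otimes F_2) \oplus (F_1 \otimes B_2) \xrightarrow{(x,y) \mapsto x+y} F_1 \otimes F_2 \to 0$$
computes $\mathrm{Tor}_*^R(F_1/B_1, F_2/B_2)$, and by Lemma \ref{lem:torcomp} the higher Tors vanish. Hence the complex is exact in positive degrees: the kernel of the sum map is precisely the antidiagonal copy of $B_1 B_2$ embedded inside $B_1 F_2 \cap F_1 B_2$.

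Now suppose $(x,y) \in (B_1\otimes F_2) \oplus (F_1 \otimes B_2)$ maps into $B_1 M_2 + M_1 B_2$, so $x + y = u + v$ in $F_1F_2$ with $u \in B_1 M_2$ and $v \in M_1 B_2$. The pair $(x-u, y-v)$ has zero sum, so by the previous step equals $(z, -z)$ for some $z \in B_1 B_2$. Therefore
$$x = u + z \in B_1 M_2 + B_1 B_2 = B_1(M_2 + B_2) = B_1 M_2$$
(since $B_2 \subseteq M_2$), and symmetrically $y \in M_1 B_2$. Combined with the obvious reverse inclusion, this gives the stated kernel $(B_1 \otimes M_2) \oplus (M_1 \otimes B_2)$.

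Finally, the short exact sequence
$$0 \to (B_1 \otimes M_2) \oplus (M_1 \otimes B_2) \to (B_1 \otimes F_2) \oplus (F_1 \otimes B_2) \to \frac{B_1 F_2 + F_1 B_2}{B_1 M_2 + M_1 B_2} \to 0$$
yields the length formula: flatness of $B_k$ (free of rank $r_k$) gives
$$\lambda\!\left( \frac{B_1 \otimes F_2}{B_1 \otimes M_2} \right) = \lambda(B_1 \otimes (F_2/M_2)) = r_1 \lambda(F_2/M_2),$$
and similarly $\lambda((F_1 \otimes B_2)/(M_1 \otimes B_2)) = r_2 \lambda(F_1/M_1)$. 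The main obstacle is really just invoking the Tor vanishing correctly; once that is in hand the set-theoretic manipulation $B_1(M_2+B_2)=B_1M_2$ and the additivity of length finish the proof cleanly.
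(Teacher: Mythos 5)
Your proof is correct and rests on the same key input as the paper, namely the Tor vanishing of Lemma \ref{lem:torcomp}; you argue intrinsically via the complex $0 \to B_1 \otimes B_2 \to (B_1\otimes F_2) \oplus (F_1\otimes B_2) \to F_1F_2 \to 0$ rather than writing out bases for $B_1$ and $B_2$ as the paper does, but the mechanism (exactness at the middle term forces $(x-u, y-v) = (z,-z)$ with $z \in B_1B_2 \subseteq B_1M_2 \cap M_1B_2$) is identical. One small precision: the freeness of the given $B_k$ is better justified by the first clause of Theorem \ref{thm:properties} — each $\det(B_k)$ is a minimal generator of the $\mathfrak{m}$-primary ideal $I_k$, hence nonzero, hence $B_k$ is free of rank $r_k$ since $R$ is a domain — rather than by the ``can be chosen free'' clause, which concerns constructing some joint reduction rather than establishing freeness of a given one.
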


\begin{proof} 
Suppose that $F_1$ has basis $X_1,\cdots,X_{r_1}$ and that $F_2$ has basis $Y_1,\cdots,Y_{r_2}$ so that $F_1 \otimes F_2 = F_1F_2$ has basis all the $X_i \otimes Y_j = X_iY_j$. Then $B_1$ has a basis of linear forms, say, $L_1,\cdots,L_{r_1}$ in $X_1,\cdots,X_{r_1}$ and $B_2$ has a basis of linear forms, say, $H_1,\cdots,H_{r_2}$ in $Y_1,\cdots,Y_{r_2}$. The modules $B_1 \otimes F_2$ and $F_1 \otimes B_2$ have bases $L_i \otimes Y_j$ and $X_i \otimes H_j$ respectively, so the image of the natural map, which is clearly $B_1F_2+F_1B_2$, is spanned by all the $L_iY_j$ and $X_iH_j$.

To verify the assertion about the kernel, suppose that 
$$
\sum_{i}  L_i \otimes P_i + \sum_{j} Q_j \otimes H_j
$$
is in the kernel of the composite map where $P_i$ are linear forms in the $Y$'s and $Q_j$ are linear forms in the $X$'s. So there exist elements $C_j \in M_1$ and $D_i \in M_2$ such that
$$
\sum_{i}  L_i  P_i + \sum_{j} Q_j H_j = \sum_{i}  L_i  D_i + \sum_{j} C_j  H_j \Rightarrow \sum_{i}  L_i  (P_i-D_i) + \sum_{j} (Q_j-C_j) H_j = 0.
$$

Hence $\sum_{i}  X_i \otimes  (P_i-D_i) + \sum_{j} (Q_j-C_j) \otimes Y_j \in ker(\partial_2) = im(\partial_1)$, from Lemma~\ref{lem:torcomp}.
Thus $P_i -D_i \in B_2$ and $Q_j-C_j \in B_1$. Hence $P_i \in M_2$ and $Q_j \in M_1$. Note that the hypotheses of Lemma~\ref{lem:torcomp} hold by Theorem~\ref{thm:properties}.
\end{proof}

The analogue of Step II for modules is the Hoskin-Deligne length formula for integrally closed modules which is stated below - see Theorem 3.12 of \cite{KdyMhn2015}.
\begin{theorem} \label{thm:hdf} Let $M \subseteq F$ be an integrally closed $R$-module of finite colength in a free module $F$. Then
$$
\lambda_R \left( \frac{F}{M} \right) = \sum_T \lambda_T \left( \frac{(MT)^{**}}{MV_T \cap (MT)^{**}} \right) [T:R],
$$
where the sum extends over all quadratic transforms $T$ of $R$, the ring $V_T$ is the order valuation ring associated to $T$ and $MT$ is the transform of $M$ in $T$.
\end{theorem}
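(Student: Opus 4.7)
The plan is an induction on the Buchsbaum-Rim multiplicity $br(F/M)$, invoking that by \cite{Kdy1995} the extension of an integrally closed module to a first quadratic transform is either free or has strictly smaller Buchsbaum-Rim multiplicity. The base case $M = F$ is trivial: both sides vanish.

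For the inductive step, I first split off the $T = R$ summand on the right-hand side. Since $F/M$ has finite length, $(MR)^{**} = F$, and setting $M^{\circ} := MV_R \cap F$ the exact sequence
\begin{equation*}
0 \longrightarrow M^{\circ}/M \longrightarrow F/M \longrightarrow F/M^{\circ} \longrightarrow 0
\end{equation*}
gives $\lambda_R(F/M) = \lambda_R(F/M^{\circ}) + \lambda_R(M^{\circ}/M)$, with the first term being exactly the $T = R$ contribution on the right-hand side. The task thus reduces to matching $\lambda_R(M^{\circ}/M)$ with the sum over all proper quadratic transforms of $R$.

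To do this, I pick a generic minimal generator $x \in {\mathfrak m}$; because $M$ is integrally closed, $\nu(M) = \operatorname{ord}(M) + \operatorname{rank}(M)$ and so $M$ is contracted from $S := R[{\mathfrak m}/x]$, i.e.\ $M = MS \cap F$. Let ${\mathfrak m}_1, \ldots, {\mathfrak m}_s$ be the maximal ideals of $S$ containing ${\mathfrak m}S$, let $R_i := S_{{\mathfrak m}_i}$ be the corresponding first quadratic transforms of $R$, and write $M^{R_i} := (MR_i)^{**}$ for the transform of $M$ in $R_i$. The crux is to establish
\begin{equation*}
\lambda_R(M^{\circ}/M) \;=\; \sum_{i=1}^{s} \lambda_{R_i}\bigl(M^{R_i}/\widetilde{M}_i\bigr)\,[R_i : R],
\end{equation*}
where $\widetilde{M}_i$ is the natural image of $M^{\circ}$ inside $M^{R_i}$, identified after dividing out by the factor $x^{\operatorname{ord}(M)}$ that appears in $MS \subseteq FS$. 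Granted this identity, each $M^{R_i}$ is again integrally closed (by \cite[Proposition 4.6]{Kdy1995}) with strictly smaller Buchsbaum-Rim multiplicity (by \cite[Theorem 4.8]{Kdy1995}), so the inductive hypothesis applied to $M^{R_i} \subseteq FR_i$ expresses $\lambda_{R_i}(M^{R_i}/\widetilde{M}_i)$ as a sum indexed by quadratic transforms of $R_i$. The multiplicativity $[T:R] = [T:R_i][R_i:R]$, together with the fact that every quadratic transform of $R$ other than $R$ itself factors uniquely through a unique $R_i$, reindexes the iterated sum into the claimed formula.

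The main obstacle is the displayed identity for $\lambda_R(M^{\circ}/M)$: one must argue that the length of the ``gap'' $M^{\circ}/M$ between $M$ and its contraction from the order valuation is recovered, with multiplicities $[R_i : R]$, by the local colengths at the first quadratic transforms. This rests on a careful analysis of how $M$ extends to $S$, exploiting the $x$-contractedness $M = MS \cap F$, the additivity $\lambda_R(N) = \sum_i \lambda_{R_i}(N_{{\mathfrak m}_i})[R_i : R]$ for any finite-length $R$-module $N$ annihilated by a power of ${\mathfrak m}S$, and the factorisation of $MS$ inside $FS$ dictated by $\operatorname{ord}(M)$. Once this identity is secured, the induction and reindexing are formal.
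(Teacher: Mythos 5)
First, note that the paper does not actually prove Theorem~\ref{thm:hdf}; it is cited verbatim as Theorem~3.12 of \cite{KdyMhn2015}, so there is no in-paper proof to compare against. Your outline does follow the standard template for Hoskin--Deligne formulas (induction on Buchsbaum--Rim multiplicity, peeling off the $T = R$ term, a local recursion through the first quadratic transforms $R_i$, and a reindexing of the iterated sum via $[T:R] = [T:R_i][R_i:R]$), which is indeed the strategy used in \cite{KdyMhn2015}.

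There is, however, a concrete error at the crux. You define $\widetilde{M}_i$ to be the image of $M^{\circ} = MV_R \cap F$ inside $M^{R_i} = (MR_i)^{**}$ after dividing out $x^{\operatorname{ord}(M)}$. Already for $\m$-primary ideals this gives the wrong answer: taking $M = I$, one has $I^{\circ} = \m^{\operatorname{ord}(I)}$ and $\m^{\operatorname{ord}(I)} S = x^{\operatorname{ord}(I)} S$, so the image of $I^{\circ}$ after dividing by $x^{\operatorname{ord}(I)}$ fills all of $R_i$, and every summand on your right-hand side is zero, whereas the left-hand side $\lambda_R\bigl(\m^{\operatorname{ord}(I)}/I\bigr)$ is positive whenever $I \neq \m^{\operatorname{ord}(I)}$. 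The object you want in place of $\widetilde{M}_i$ is the transform of $M$ itself (the image of $M$, not of $M^{\circ}$), which is also exactly the module to which the cited Proposition~4.6 and Theorem~4.8 of \cite{Kdy1995} apply; those results say nothing about images of $M^{\circ}$, so as written the inductive hypothesis cannot even be invoked. Even with the correct object, the displayed identity is the entire substance of the recursive step, and you do not prove it; you list plausible ingredients (contractedness from $S = R[\m/x]$, additivity of length over the $\m_i$, the $x^{\operatorname{ord}(M)}$-factorisation) but supply no argument. So the proposal is a sketch of the right shape with the wrong module sitting at its pivot.
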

This theorem expresses the colength of an integrally closed module in terms of data associated to local submodules for quadratic transforms of $R$, where, by `local', we mean `contracted from the order valuation'.

The analogue of the first equality of Step III for modules is the following equality:
$$
\lambda\left( \frac{F_1F_2}{B_1F_2+F_1B_2} \right) = br(M_1|M_2),
$$
which follows from Theorem \ref{thm:brmultepchar} and Theorem \ref{thm:comparison} since $R$ is Cohen-Macaulay. The analogue of the second equality of Step III for modules is the following equality:
$$
br(M_1|M_2) =\sum_T o_T(I_1^T)o_T(I_2^T)  [T:R],
$$
which follows from Corollary \ref{cor:comparison} and the Hoskin-Deligne formula for ideals.

The analogue of Step IV for modules is contained  in Theorem \ref{thm:local} before proving which we need some preparatory results about local modules.

\begin{proposition}\label{prop:prodlocal}
If $M_1,M_2$ are local submodules for $R$, then so is $M_1M_2$.
\end{proposition}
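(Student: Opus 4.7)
The plan is to verify the numerical equality $\nu(M_1M_2) = ord(M_1M_2) + rank(M_1M_2)$, which, as recalled at the start of \S2, characterises torsion-free modules contracted from an overring $R[\m/x]$ for some minimal generator $x$ of $\m$; in the present two-dimensional regular local setting this is equivalent to being contracted from the order valuation of $R$, hence to being local. All the ingredients needed to run this verification were essentially assembled inside the proof of Theorem \ref{thm:jtred0}.

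First I would record that $M_1M_2 \subseteq F_1 F_2$ has rank $r_1 r_2$ and is integrally closed by the product theorem of \cite{Kdy1995}. Pick a joint reduction $(B_1, B_2)$ of $(M_1, M_2)$ using Proposition \ref{prop:exist}; Theorem \ref{thm:properties} lets us take each $B_k$ to be a free submodule of $M_k$ of rank $r_k$ whose basis extends to a minimal generating set of $M_k$. The joint-reduction-number-zero theorem (Theorem \ref{thm:jtred0}) then gives
\[
M_1M_2 \;=\; B_1M_2 + M_1B_2
\]
inside $F_1 F_2$.

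Setting $n_k = ord(M_k)$, the locality of $M_k$ gives $\nu(M_k) = r_k + n_k$. The computation carried out in part (2) of the proof of Theorem \ref{thm:jtred0} then yields $\nu(M_1M_2) = r_1 r_2 + r_1 n_2 + n_1 r_2$, while part (3) of the same proof supplies the bound $ord(M_1M_2) \le r_1 n_2 + n_1 r_2$. Combined with the universal inequality $\nu(N) \le ord(N) + rank(N)$ valid for any torsion-free $N$, these assemble into the chain
\[
r_1 r_2 + r_1 n_2 + n_1 r_2 \;=\; \nu(M_1M_2) \;\le\; ord(M_1M_2) + r_1 r_2 \;\le\; r_1 n_2 + n_1 r_2 + r_1 r_2,
\]
which forces equality throughout and in particular $\nu(M_1M_2) = ord(M_1M_2) + rank(M_1M_2)$. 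Thus $M_1M_2$ is local.

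There is no serious obstacle here: the substantive calculations were already performed within the proof of Theorem \ref{thm:jtred0}. The only new observation needed is that those two counts (the number of generators, and the order bound) pin $ord(M_1M_2)$ down exactly rather than merely bounding it, and this is precisely the numerical criterion for locality.
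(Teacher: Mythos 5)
There is a genuine gap, and it lies in the very first sentence of your plan. The numerical equality $\nu(N) = ord(N) + rank(N)$ characterises torsion-free modules contracted from \emph{some} overring $R[\m/x]$, i.e., the ``contracted'' modules in the sense of Zariski--Huneke--Kodiyalam. This is a strictly weaker condition than being contracted from the order valuation $V$, which is what ``local'' means in \S5. For instance, the integrally closed ideal $I=(x,y^2)$ (with $\m=(x,y)$) has $\nu(I)=2=ord(I)+1$ and is contracted from $R[\m/y]$, but $IV\cap R=\m\neq I$, so $I$ is not local. More to the point, the paper records that the equality $\nu(M)=ord(M)+rank(M)$ already holds for \emph{every} integrally closed module; since $M_1M_2$ is integrally closed by Kodiyalam's theorem, verifying this equality for $M_1M_2$ conveys no information beyond integral closure and cannot establish locality.

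The paper instead invokes the characterisation in Theorem 3.7(4) of \cite{KdyMhn2015}: a module is local iff it is integrally closed with Fitting ideal $I(\cdot)$ a power of $\m$. Then one line finishes it: $M_1M_2$ is integrally closed, and $I(M_1M_2)=I(M_1)^{r_2}I(M_2)^{r_1}$ by Theorem 1(1) of \cite{BrnVsc2003} for integrally closed modules, which is a power of $\m$ since $M_1$ and $M_2$ are local. Your argument could be repaired in the same spirit --- your computation does correctly pin down $ord(M_1M_2)=r_1n_2+n_1r_2$, and combining that with the containment $I(M_1M_2)\supseteq I(M_1)^{r_2}I(M_2)^{r_1}=\m^{r_1n_2+n_1r_2}$ (which only needs locality of $M_1,M_2$, not the full Bruns--Vasconcelos equality) would give $I(M_1M_2)=\m^{r_1n_2+n_1r_2}$, hence locality --- but you would still need to cite the correct characterisation of local modules, and this route invokes the full force of Theorem \ref{thm:jtred0}, Theorem \ref{thm:properties} and Proposition \ref{prop:exist} for what the paper dispatches as a corollary of two ready-made facts.
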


\begin{proof}
A characterisation of local submodules - see Theorem 3.7(4) of \cite{KdyMhn2015}  - is that they be integrally closed with ideal of maximal minors a power of ${\mathfrak m}$.
If both $M_1,M_2$ satisfy these conditions, then $M_1M_2$ is integrally closed and by Theorem~1(1) of \cite{BrnVsc2003} the ideal
$$
I(M_1M_2) = I(M_1)^{r_2}I(M_2)^{r_1}
$$
since all of these are integrally closed.  Thus $I(M_1M_2)$ is a power of ${\mathfrak m}$ and so $M_1M_2$ is also local.
\end{proof}

\begin{corollary} \label{cor:localprod}
For any submodules $M_1 \subseteq F_1$ and  $M_2 \subseteq F_2$ of finite colength,
$$
M_1M_2V \cap F_1F_2 = (M_1V \cap F_1)(M_2V \cap F_2).
$$
\end{corollary}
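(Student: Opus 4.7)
Plan of proof. Set $N_i := M_iV \cap F_i$ for $i=1,2$, so each $N_i$ is by construction contracted from the order valuation ring $V$, and therefore a local submodule of $F_i$ (in the sense that it is integrally closed with $I(N_i)$ a power of $\mathfrak m$ --- this is the characterisation given by Theorem 3.7(4) of \cite{KdyMhn2015}). The identity to be proved can therefore be recast as $N_1 N_2 = M_1 M_2 V \cap F_1 F_2$.

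The easy inclusion $N_1 N_2 \subseteq M_1 M_2 V \cap F_1 F_2$ is immediate, since $N_i \subseteq F_i$ forces $N_1 N_2 \subseteq F_1 F_2$, while $N_i \subseteq M_i V$ forces $N_1 N_2 \subseteq (M_1V)(M_2V) = M_1 M_2 V$. For the reverse inclusion, the plan is first to observe that $M_i \subseteq N_i \subseteq M_i V$ forces $N_i V = M_i V$, so that
$$
(N_1 N_2) V = (N_1 V)(N_2 V) = (M_1 V)(M_2 V) = M_1 M_2 V.
$$
Then I would invoke Proposition \ref{prop:prodlocal}, applied to the pair of local submodules $N_1$ and $N_2$: the product $N_1 N_2 \subseteq F_1 F_2$ is itself local, and in particular contracted from $V$. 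Combining these two observations,
$$
N_1 N_2 = (N_1 N_2) V \cap F_1 F_2 = M_1 M_2 V \cap F_1 F_2,
$$
as desired.

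The only step that requires care is the identification of each $N_i$ as a local submodule, which is what allows Proposition \ref{prop:prodlocal} to be applied. Integral closedness of $N_i$ is automatic, since $N_i$ is the contraction to $F_i$ of a submodule of the free $V$-module $F_i \otimes_R V$; and the condition that $I(N_i)$ be a power of $\mathfrak m$ is precisely the characterisation of modules contracted from the order valuation recalled above. Once these features are in hand, the proof is a two-line consequence of Proposition \ref{prop:prodlocal} and the calculation $(N_1 N_2)V = M_1 M_2 V$, with no further computation needed.
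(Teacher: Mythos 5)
Your proof is correct and follows essentially the same route as the paper's: set $N_i = M_iV\cap F_i$, observe $(N_1N_2)V = M_1M_2V$, then apply Proposition~\ref{prop:prodlocal} to conclude that $N_1N_2$ is local (hence contracted from $V$) and thus equal to $M_1M_2V\cap F_1F_2$. The extra remarks on the easy inclusion and on why the $N_i$ qualify as local submodules are harmless elaborations; the core argument matches the paper's.
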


\begin{proof} First, for any $M \subseteq F$, $(MV \cap F)V = MV$. Applying this to $M_1$ and $M_2$, we see that $$ M_1M_2V = M_1V M_2V = (M_1V \cap F_1)V(M_2V \cap F_2)V = (M_1V \cap F_1)(M_2V \cap F_2)V.$$
Now intersect with $F_1F_2$ and use Proposition \ref{prop:prodlocal} to conclude that
$$M_1M_2V \cap F_1F_2 = (M_1V \cap F_1)(M_2V \cap F_2),$$
since $M_1V \cap F_1$ and $M_2V \cap F_2$ are local.
\end{proof}

 \begin{theorem}\label{thm:local}  If $M_1 \subseteq F_1$ and $M_2\subseteq F_2$ are local submodules for $R$ of ranks $r_1, r_2$ and orders $n_1, n_2$ respectively, then,
$$
\lambda\left( \frac{F_1F_2}{M_1M_2} \right) = r_2 \lambda\left( \frac{F_1}{M_1} \right)  + r_1 \lambda\left( \frac{F_2}{M_2} \right) +
n_1n_2
$$
\end{theorem}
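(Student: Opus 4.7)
The approach follows Step~IV of the ideal-case proof given above. The plan is first to establish a structural decomposition for local modules, and then reduce the desired identity to a finite sum of instances of the elementary identity $\binom{a+b+1}{2}=\binom{a+1}{2}+\binom{b+1}{2}+ab$.

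First, I would prove a structure lemma: any local module $M\subseteq F$ of rank $r$ and order $n$ admits an $R$-basis $e_1,\ldots,e_r$ of $F$ and non-negative integers $a_1,\ldots,a_r$ with $\sum_i a_i=n$ such that $M=\bigoplus_{i=1}^r\m^{a_i}e_i$. Over the order valuation DVR $V_R$, the free $V_R$-module $MV_R\subseteq FV_R$ has a Smith normal form $MV_R=\bigoplus_i t^{a_i}V_R f_i$ for some $V_R$-basis $f_1,\ldots,f_r$ of $FV_R$, with $\sum_i a_i=n$ since $I(M)V_R=(t^{\sum a_i})V_R=\m^nV_R$. The content of the lemma is that, because $M$ is local, the $f_i$ can be chosen to lie in $F$ and to form an $R$-basis; the defining identity $M=MV_R\cap F$ then descends the SNF decomposition from $V_R$ to $R$.

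Given the structure lemma, write $M_1=\bigoplus_j\m^{a_{1,j}}e_{1,j}\subseteq F_1$ and $M_2=\bigoplus_k\m^{a_{2,k}}e_{2,k}\subseteq F_2$, so that $\lambda(F_i/M_i)=\sum_j\binom{a_{i,j}+1}{2}$. Using Proposition~\ref{prop:prodlocal} (so that $M_1M_2$ is local of rank $r_1r_2$) and the coordinate-wise behaviour of the product in such an $R$-basis, one has
\[M_1M_2=\bigoplus_{j,k}\m^{a_{1,j}+a_{2,k}}(e_{1,j}\otimes e_{2,k})\subseteq F_1F_2,\]
whence $\lambda(F_1F_2/M_1M_2)=\sum_{j,k}\binom{a_{1,j}+a_{2,k}+1}{2}$. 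Expanding each summand via the binomial identity and collecting, this equals
\[r_2\sum_j\binom{a_{1,j}+1}{2}+r_1\sum_k\binom{a_{2,k}+1}{2}+\Big(\sum_j a_{1,j}\Big)\Big(\sum_k a_{2,k}\Big),\]
which is exactly $r_2\lambda(F_1/M_1)+r_1\lambda(F_2/M_2)+n_1n_2$, as desired.

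The hard part is the structure lemma. A priori the SNF basis $f_i\in FV_R$ is a $V_R$-linear combination of an $R$-basis of $F$ with possibly non-$R$ coefficients, so a careful sequence of row and column operations (exploiting that $M$, and hence the relevant transition-matrix determinants, are defined over $R$) is needed to produce an $R$-basis of $F$ realising the decomposition. It is here that the hypothesis of being local (equivalently, integrally closed with $I(M)$ a power of $\m$, by the cited Theorem~3.7(4) of \cite{KdyMhn2015}) plays the essential role: mere integral closure is insufficient, and it is precisely the identity $M=MV_R\cap F$ that allows the SNF data at the DVR level to lift to an honest $R$-module decomposition.
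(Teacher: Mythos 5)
Your proof hinges on a structure lemma that is false: not every local module of rank $r \geq 2$ decomposes as $\bigoplus_{i=1}^r \m^{a_i} e_i$ for some $R$-basis $\{e_i\}$ of $F$. Indecomposable local modules of higher rank exist, and the proposed descent from the Smith normal form over $V_R$ to $R$ cannot be carried out in general.

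Here is a concrete counterexample. Take $R = k[[x,y]]$, $F = R^2$, and
$$
M = R\,(y,x) + \m^2 F \subseteq F.
$$
One checks that $I(M) = \m^3$, so $\mathrm{rank}(M) = 2$ and $\mathrm{ord}(M) = 3$. A short computation gives $\lambda(F/M) = 5$: indeed $\lambda(F/\m^2 F) = 6$, while $M/\m^2 F$ is the $1$-dimensional span of the image of $(y,x)$. Moreover $M = MV_R \cap F$ for the order valuation ring $V_R$ (the elements of $MV_R \cap \m F$ that are not in $\m^2 F$ are, modulo $\m^2 F$ and up to units, exactly the multiples of $(y,x)$), so $M$ is integrally closed with $I(M)$ a power of $\m$ and hence local. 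Yet no decomposition $M = \m^{a_1} g_1 \oplus \m^{a_2} g_2$ is possible: with $a_1 + a_2 = 3$ one would need
$$
\lambda(F/M) = \binom{a_1+1}{2} + \binom{a_2+1}{2},
$$
which for $(a_1,a_2) \in \{(0,3),(1,2)\}$ gives $6$ or $4$, never $5$. So $M$ is an indecomposable local module of rank $2$. This is precisely the obstruction you flagged as ``the hard part'': in this example the SNF basis over $V_R$ is $\{e_1,\ (y/x)e_1 + e_2\}$ with $MV_R = t^2 V_R\, e_1 \oplus tV_R\,\big((y/x)e_1 + e_2\big)$, and the second basis vector genuinely lies outside $F$; the contraction identity $M = MV_R \cap F$ does not rescue this but is instead perfectly compatible with indecomposability.

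The paper's proof avoids any decomposition of local modules. It first proves the inequality $\leq$ (using a joint reduction $(B_1,B_2)$ together with Proposition~\ref{prop:step1analogue} and the identification of $n_1n_2$ with $\lambda(F_1F_2/(B_1F_2 + F_1B_2))$), and then proves equality by a double induction: on $r_2$, reducing to the case where $M_2$ has no free or lower-rank direct summand, and then on $\lambda(F_2/M_2)$, passing from $M_2$ to $M_2' = \m(M_2:\m)$ and bounding $\lambda(M_1M_2/M_1M_2')$ directly by a count of generators. That argument works precisely because it never requires $M_2$ to split as a direct sum of rank-one pieces.
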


We will first show that the inequality $\leq$ always holds in the equation above.

\begin{lemma}\label{lem:ineq}
With notation as in Theorem \ref{thm:local}, 
$$
\lambda\left( \frac{F_1F_2}{M_1M_2} \right) \leq r_1 \lambda\left( \frac{F_2}{M_2} \right) + r_2 \lambda\left( \frac{F_1}{M_1} \right) + n_1n_2.
$$
\end{lemma}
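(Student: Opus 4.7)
The plan is to prove the stronger equality
$$\lambda\left(\frac{F_1F_2}{B_1M_2+M_1B_2}\right) = n_1n_2 + r_1\lambda\left(\frac{F_2}{M_2}\right) + r_2\lambda\left(\frac{F_1}{M_1}\right)$$
for a joint reduction $(B_1,B_2)$ of $(M_1,M_2)$ in which each $B_k$ is a free submodule of $F_k$ of rank $r_k$; such a joint reduction exists by Theorem \ref{thm:properties} since $R$ has positive depth. Because $B_1M_2+M_1B_2 \subseteq M_1M_2$, the desired inequality is immediate from $\lambda(F_1F_2/M_1M_2) \le \lambda(F_1F_2/(B_1M_2+M_1B_2))$.

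To establish this equality, I would split the length along the chain $B_1M_2+M_1B_2 \subseteq B_1F_2+F_1B_2 \subseteq F_1F_2$. Proposition \ref{prop:step1analogue}, applied to the integrally closed modules $M_1,M_2$, gives
$$\lambda\left(\frac{B_1F_2+F_1B_2}{B_1M_2+M_1B_2}\right) = r_1\lambda\left(\frac{F_2}{M_2}\right) + r_2\lambda\left(\frac{F_1}{M_1}\right),$$
so the task reduces to showing $\lambda(F_1F_2/(B_1F_2+F_1B_2)) = n_1n_2$. For this outer step I would identify $F_1F_2/(B_1F_2+F_1B_2)$ with $H_0(K_\bullet(\phi_1,\phi_2))$, where $\phi_k\colon F_k\to F_k$ has image $B_k$. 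Since $R$ is two-dimensional regular, hence Cohen-Macaulay, and $(\det\phi_1,\det\phi_2)$ is $\m$-primary by Theorem \ref{thm:equiv}, Theorem \ref{thm:comparison} makes $K_\bullet(\phi_1,\phi_2)$ acyclic and yields
$$\lambda\left(\frac{F_1F_2}{B_1F_2+F_1B_2}\right) = \lambda\left(\frac{R}{(\det\phi_1,\det\phi_2)}\right).$$

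To finish, I would use that $M_k$ being local of order $n_k$ forces $I_k = I(M_k) = \m^{n_k}$, so by Theorem \ref{thm:equiv} the pair $(\det\phi_1,\det\phi_2)$ is a joint reduction of $(\m^{n_1},\m^{n_2})$. In a two-dimensional Cohen-Macaulay local ring the length of $R$ modulo a parameter ideal equals the multiplicity of that ideal, which by Rees's theorem is the mixed multiplicity $e(\m^{n_1}\mid\m^{n_2}) = n_1 n_2\, e(\m) = n_1n_2$. I do not foresee any serious obstacle; the main point to verify carefully is the passage from $M_k$ being local to $I(M_k)$ being a pure power of $\m$, after which Rees's formula delivers exactly $n_1n_2$.
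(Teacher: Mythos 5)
Your proof is correct and follows the same structure as the paper's: replace $M_1M_2$ by $B_1M_2+M_1B_2$ for a free joint reduction, split along the chain $B_1M_2+M_1B_2 \subseteq B_1F_2+F_1B_2 \subseteq F_1F_2$, use Proposition \ref{prop:step1analogue} for the inner step, and handle the outer step via the Koszul complex and its acyclicity from Theorem \ref{thm:comparison}. The only cosmetic difference is that you go directly from $\lambda(R/(\det\phi_1,\det\phi_2))$ to $n_1n_2$ via Rees's joint-reduction multiplicity theorem, whereas the paper phrases that last step through the intermediate equalities $n_1n_2 = e(I_1|I_2) = br(M_1|M_2)$ using Corollary \ref{cor:comparison} and Theorem \ref{thm:brmultepchar}; both invocations rest on the same underlying facts.
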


\begin{proof}
To see this, let $(B_1,B_2)$ be a joint reduction of $(M_1,M_2)$. Then $B_1,B_2$ are free submodules of $F_1,F_2$ of ranks $r_1,r_2$ by Theorem \ref{thm:properties}. By Proposition \ref{prop:step1analogue},
$$
r_1 \lambda\left( \frac{F_2}{M_2} \right) + r_2 \lambda\left( \frac{F_1}{M_1} \right) = \lambda\left( \frac{B_1F_2+F_1B_2}{B_1M_2+M_1B_2} \right).
$$

Since $M_1,M_2$ are local submodules, $I_1 = {\mathfrak m}^{n_1}, I_2 = {\mathfrak m}^{n_2}$ and so 
by Corollary 19 and Theorem \ref{thm:brmultepchar}, 
$$
n_1n_2 = e(I_1|I_2) = br(M_1|M_2) = \lambda\left( \frac{F_1F_2}{B_1F_2+F_1B_2} \right).
$$
Thus the right hand side of the desired inequality evaluates to $\lambda\left( \frac{F_1F_2}{B_1M_2+M_1B_2} \right)$ and 
since $B_1M_2+M_1B_2 \subseteq M_1M_2$, we're done.
\end{proof}

\begin{proof}[Proof of Theorem \ref{thm:local}] We first demonstrate the case $q = 2$. For this, we induce on $r_2$. When $r_2=1$, the module $M_2={\mathfrak m}^t$ and $F_2=R$ for some $t \geq 0$. So we need to see that
$$
\lambda\left( \frac{F_1}{{\mathfrak m}^{n_2}M_1} \right) = r_1 \lambda\left( \frac{R}{{\mathfrak m}^{n_2}} \right) +  \lambda\left( \frac{F_1}{M_1} \right) + n_1n_2.
$$
The left hand side of this equation equals
\begin{eqnarray*}
  &\lambda\left( \frac{F_1}{M_1} \right) + \mu(M_1) + \mu({\mathfrak m}M_1) + \cdots + \mu({\mathfrak m}^{n_2-1}M_1) =\\
  &\lambda\left( \frac{F_1}{M_1} \right) + (n_1+r_1) + (n_1+2r_1) + \cdots + (n_1+n_2r_1) =\\
  &\lambda\left( \frac{F_1}{M_1} \right) + n_1n_2+r_1\binom{n_2+1}{2}
\end{eqnarray*}
which agrees with the right hand side, proving equality in the basis case.

When $r_2 >1$, we may assume that $M_2$ has no direct summand by induction since both sides are additive with respect to direct sum decompositions of $M_2$. 
Now we will induce on $\lambda\left( \frac{F_2}{M_2} \right)$. If this is at most $r_2$ or more generally if $M_2 \not\subseteq {\mathfrak m}F_2$, then $M_2$ necessarily has a free direct summand, so we're done.  So we assume that $M_2 \subseteq {\mathfrak m}F_2$. By the inductive hypothesis the result holds for $(M_1,M_2:{\mathfrak m})$ provided $M_2:{\mathfrak m}$ is contracted from the order valuation of $R$, which we next verify.
Suppose that $v \in F_2 \cap (M_2:{\mathfrak m})V$. 
Consider ${\mathfrak m}v \subseteq {\mathfrak m}(M_2:{\mathfrak m})V \cap F_2 \subseteq M_2V \cap F_2 = M_2.$ So $v \in M_2:{\mathfrak m}$, as desired.

It is easy to verify that if the proposition holds for $(M_1,M_2)$ then it also does for $(M_1,{\mathfrak m}M_2)$ - note that ${\mathfrak m}M_2$ is contracted from the order valuation of $R$ by Theorem~3.7 of \cite{KdyMhn2015}. So the proposition holds for $(M_1,M_2^\prime = {\mathfrak m}(M_2:{\mathfrak m}))$, giving the equation
$$
\lambda\left( \frac{F_1F_2}{M_1M_2^\prime} \right) = r_1 \lambda\left( \frac{F_2}{M_2^\prime} \right) + r_2 \lambda\left( \frac{F_1}{M_1} \right) + n_1n_2^\prime
$$
where $n_2^\prime = ord(M_2^\prime)$.

Hence to prove that the proposition holds for $(M_1,M_2)$ it suffices to verify the equation
$$
\lambda\left( \frac{M_1M_2}{M_1M_2^\prime} \right) = r_1 \lambda\left( \frac{M_2}{M_2^\prime} \right)  + n_1(n_2^\prime - n_2).
$$
Actually by Lemma \ref{lem:ineq}, it suffices to verify the inequality
$$
\lambda\left( \frac{M_1M_2}{M_1M_2^\prime} \right) \leq r_1 \lambda\left( \frac{M_2}{M_2^\prime} \right)  + n_1(n_2^\prime - n_2).
$$

We will now see that 
$$
\lambda\left( \frac{M_2}{M_2^\prime} \right) = n_2^\prime - n_2.
$$
This follows from considering the chain
$$
M_2^\prime = {\mathfrak m}(M_2:{\mathfrak m}) \subseteq M_2 \subseteq M_2:{\mathfrak m}
$$
and observing that 
the length of the whole chain is 
$$
\mu(M_2:{\mathfrak m}) = ord(M_2:{\mathfrak m}) + r_2 = ord(M_2^\prime) = n_2^\prime
$$
 while the length of the top link is $dim ~Tor_1(M_2,k) = ord(M_2) = n_2$.

So we're down to checking that
$$
\lambda\left( \frac{M_1M_2}{M_1M_2^\prime} \right) \leq (r_1 + n_1)\lambda\left( \frac{M_2}{M_2^\prime} \right) = \mu(M_1)\lambda\left( \frac{M_2}{M_2^\prime} \right).
$$

To see this note that $\frac{M_1M_2}{M_1M_2^\prime}$ and $\frac{M_2}{M_2^\prime}$ are both vector spaces over $k$, so the lengths are just the dimensions. It suffices to see that $\frac{M_1M_2}{M_1M_2^\prime}$ has a spanning set with at most $\mu(M_1)\lambda\left( \frac{M_2}{M_2^\prime} \right)$ elements.

We have $\frac{M_2^\prime}{{\mathfrak m}M_2} \subseteq \frac{M_2}{{\mathfrak m}M_2}$ with codimension $dim\left( \frac{M_2}{M_2^\prime}\right)$. Choosing a splitting and lifting bases back to $M_2$ results in elements $\{H_1,\cdots,H_t\}$ in $M_2^\prime$ whose images in $\frac{M_2^\prime}{{\mathfrak m}M_2}$ form a basis and elements $\{H_{t+1},\cdots,H_{t+u}\}$ in $M_2$ which, together with $\{H_1,\cdots,H_t\}$, form a minimal generating set of $M_2$. Thus $\lambda\left( \frac{M_2}{M_2^\prime} \right) = u$.

Suppose that $L_1,\cdots,L_{r_1+n_1}$ are a minimal generating set of $M_1$. Then all the $L_iH_j \in F_1F_2$ generate $M_1M_2$. If $j \leq t$ then $L_iH_j \in M_1M_2^\prime$, so the quotient  $\frac{M_1M_2}{M_1M_2^\prime}$ is spanned by all the $L_iH_j$ with $t+1 \leq j \leq t+u$ - a total of $(n_1+r_1)u$ elements, as needed.

\medskip
As before, the case $q > 2$ reduces to the case $q = 2$ using the fact that a product of integrally closed modules is integrally closed and that the order function is multiplicative.
\end{proof}

We now put the preceding results together to give our second proof of Theorem~\ref{thm:jtred0}. 

\begin{proof}[Proof of Theorem \ref{thm:jtred0}] We will first show, generalising Theorem \ref{thm:local}, that for integrally closed modules, $M_1,M_2$,
$$
\lambda\left( \frac{F_1F_2}{M_1M_2} \right) = r_1 \lambda\left( \frac{F_2}{M_2} \right) + r_2 \lambda\left( \frac{F_1}{M_1} \right) + e(I_1|I_2).
$$
To see this, apply the Hoskin-Deligne length formula for integrally closed modules - Theorem \ref{thm:hdf} - to the three length terms and its consequence for the mixed multiplicity to the last term to see that it suffices to show that for every quadratic transform $T$ of $R$,
\begin{align*}
\lambda_T \left( \frac{(M_1M_2T)^{**}}{M_1M_2V_T \cap (M_1M_2T)^{**}} \right) =  r_1\lambda_T \left( \frac{(M_2T)^{**}}{M_2V_T \cap (M_2T)^{**}} \right)\\
 + r_2\lambda_T \left( \frac{(M_1T)^{**}}{M_1V_T \cap (M_1T)^{**}} \right) + o_T(I_1^T)o_T(I_2^T).
\end{align*}
Using Corollary \ref{cor:localprod}, the term on the left of the equation evaluates to
$$
\lambda_T \left( \frac{(M_1T)^{**}(M_2T)^{**}}{(M_1V_T \cap (M_1T)^{**})(M_2V_T \cap (M_2T)^{**})} \right).
$$
Now use the fact that $I(MV \cap F) = I(M)V \cap F$ applied to $M_1^T$ and $M_2^T$ and Theorem \ref{thm:local} applied to $M_1V^T \cap (M_1T)^{**}$ and $M_2V^T \cap (M_2T)^{**}$ to conclude that the desired equality does hold.

However, we also have that
\begin{eqnarray*}
\lambda\left( \frac{F_1F_2}{B_1M_2+M_1B_2} \right) &=& \lambda\left( \frac{B_1F_2+F_1B_2}{B_1M_2+M_1B_2} \right)  + \lambda\left( \frac{F_1F_2}{B_1F_2+F_1B_2} \right)\\
&=& r_1 \lambda\left( \frac{F_2}{M_2} \right) + r_2 \lambda\left( \frac{F_1}{M_1} \right) + e(I_1|I_2)
\end{eqnarray*}
where the second equality follows from Proposition \ref{prop:step1analogue},  Corollary \ref{cor:comparison}, Theorem \ref{thm:comparison} and Theorem \ref{thm:brmultepchar}.

Finally, since
$$
\lambda\left( \frac{F_1F_2}{B_1M_2+M_1B_2} \right) = \lambda\left( \frac{F_1F_2}{M_1M_2} \right),
$$
it follows that $M_1M_2 = B_1M_2+M_1B_2$.
\end{proof}

\section{The joint Buchsbaum-Rim function for integrally closed modules over two-dimensional regular local rings} Throughout this section $(R,{\mathfrak m},k)$ will be a two-dimensional regular local ring with infinite residue field. Our goal in this section is to give an explicit expression for the joint Buchsbaum-Rim function of several integrally closed modules over $R$. The case of a single integrally closed module was given in \cite{KtzKdy1997}. We begin with a proposition that gives the colength of a product of integrally closed modules.

\begin{proposition}\label{prop:prodlength} Suppose that $M_1 \subseteq F_1$, $\cdots$, $M_q \subseteq F_q$ (for $q \geq 2$) are integrally closed modules of finite colength in free modules $F_k$ of rank $r_k$. Let $I_k$ denote the ideal of maximal minors of $M_k$. For $1 \leq i < j \leq q$, set
\begin{eqnarray*}
s_i &=& r_1 \cdots \widehat{r_i} \cdots r_q,\\
t_{ij} &=& r_1 \cdots \widehat{r_i} \cdots \widehat{r_j} \cdots r_q.
\end{eqnarray*}
Then,
$$
\lambda\left( \frac{F_1\cdots F_q}{M_1\cdots M_q} \right) = \sum_{i=1}^q s_i \lambda\left( \frac{F_i}{M_i} \right) + \sum_{1 \leq i < j \leq q}   t_{ij}e(I_i|I_j).
$$
\end{proposition}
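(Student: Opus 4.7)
The proof proceeds by induction on $q$. The base case $q=2$ was established at the end of Section~5 in the course of the second proof of Theorem~\ref{thm:jtred0}, where it is shown that
\[
\lambda\!\left(\frac{F_1F_2}{M_1M_2}\right) = r_1\lambda\!\left(\frac{F_2}{M_2}\right) + r_2\lambda\!\left(\frac{F_1}{M_1}\right) + e(I_1|I_2);
\]
this agrees with the claim for $q=2$ since $s_1 = r_2$, $s_2 = r_1$, and $t_{12} = 1$.

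For the inductive step from $q$ to $q+1$, set $M := M_1\cdots M_q \subseteq F := F_1\cdots F_q$. By the product theorem of \cite{Kdy1995} the module $M$ is integrally closed in $F$, and iterating the two-factor formula $I(N_1 N_2) = I(N_1)^{\operatorname{rank}(F_2)}I(N_2)^{\operatorname{rank}(F_1)}$ for integrally closed modules (Theorem~1(1) of \cite{BrnVsc2003}, already invoked in the proof of Proposition~\ref{prop:prodlocal}) yields
\[
I(M) = I_1^{s_1'}\cdots I_q^{s_q'}, \qquad s_i' := \prod_{j \leq q,\, j \neq i} r_j,
\]
which is $\m$-primary, so $M$ has finite colength in $F$. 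Applying the base case to the pair $(M, M_{q+1}) \subseteq (F, F_{q+1})$ then gives
\[
\lambda\!\left(\frac{F_1\cdots F_{q+1}}{M_1\cdots M_{q+1}}\right) = (r_1\cdots r_q)\,\lambda\!\left(\frac{F_{q+1}}{M_{q+1}}\right) + r_{q+1}\,\lambda\!\left(\frac{F}{M}\right) + e\!\left(I(M)\,\big|\,I_{q+1}\right).
\]

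Two ingredients complete the argument. First, the inductive hypothesis expands $\lambda(F/M)$ with coefficients $s_i'$ on the $\lambda(F_i/M_i)$ and $t_{ij}'$ on the $e(I_i|I_j)$, where $t_{ij}'$ is the $q$-variable analogue of $t_{ij}$. Second, mixed multiplicities of $\m$-primary ideals are additive in each slot under products, whence
\[
e\!\left(I(M)\,\big|\,I_{q+1}\right) = \sum_{i=1}^q s_i'\, e(I_i\,|\,I_{q+1});
\]
in our setting this additivity follows at once from the Hoskin--Deligne expression $e(I|J) = \sum_T o_T(I^T)o_T(J^T)[T:R]$ recalled in Section~5, together with the valuative identity $o_T((IJ)^T) = o_T(I^T) + o_T(J^T)$. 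Substituting these, and using the relations $s_i = r_{q+1}s_i'$ for $i \leq q$, $s_{q+1} = r_1\cdots r_q$, $t_{ij} = r_{q+1}t_{ij}'$ for $i < j \leq q$, and $t_{i,q+1} = s_i'$, the three contributions recombine into precisely the asserted formula for $q+1$.

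The principal (though rather mild) obstacle is this final combinatorial bookkeeping: no individual step is deep, but one must track indices carefully to verify that the reassembled coefficients equal exactly the $s_i$'s and $t_{ij}$'s of the $(q+1)$-variable statement. An alternative inductionless route would apply the Hoskin--Deligne length formula (Theorem~\ref{thm:hdf}) to every term of the claim and reduce, via Corollary~\ref{cor:localprod}, to the corresponding local identity at each quadratic transform (a direct multi-factor generalisation of Theorem~\ref{thm:local}); however, the inductive route above is considerably shorter.
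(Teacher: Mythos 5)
Your proof is correct and follows essentially the same route as the paper's: induction on $q$, reduction to the $q=2$ base case established in the proof of Theorem \ref{thm:jtred0}, the Bruns--Vasconcelos formula for $I(M_1\cdots M_q)$, and additivity of mixed multiplicities. The only cosmetic difference is that you peel off the last factor $M_{q+1}$ while the paper peels off $M_1$; the index bookkeeping you carry out is correct, and your explicit Hoskin--Deligne justification of additivity of $e(\cdot\mid\cdot)$ fills in a detail the paper cites by name only.
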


\begin{proof} The proof is by induction on $q \geq 2$ where the basis case $q=2$ is proved in the  first equation in the proof of Theorem~\ref{thm:jtred0} of the last section.
For $q > 2$ we first introduce the following notation for $2 \leq i<j \leq q$.
\begin{eqnarray*}
\tilde{s}_i &=& r_2 \cdots \widehat{r_i} \cdots r_q = t_{1i} = \frac{s_i}{r_1},\\
\tilde{t}_{ij} &=& r_2 \cdots \widehat{r_i} \cdots \widehat{r_j} \cdots r_q = \frac{t_{ij}}{r_1}.
\end{eqnarray*}
We then see that
\begin{eqnarray*}
\lambda\left( \frac{F_1\cdots F_q}{M_1\cdots M_q} \right) &=& r_1 \lambda\left( \frac{F_2\cdots F_q}{M_2\cdots M_q} \right) + s_1 \lambda\left( \frac{F_1}{M_1} \right) + e(I_1|I(M_2\cdots M_q))\\
&=& r_1\left\{ \sum_{i=2}^q \tilde{s}_i \lambda\left( \frac{F_i}{M_i} \right) + \sum_{2 \leq i < j \leq q}   \tilde{t}_{ij}e(I_i|I_j) \right\} + s_1 \lambda\left( \frac{F_1}{M_1} \right) \\ 
& &  + e(I_1|I_2^{\tilde{s}_2}\cdots I_q^{\tilde{s}_q}))\\
&=&  \sum_{i=2}^q {s}_i \lambda\left( \frac{F_i}{M_i} \right) + \sum_{2 \leq i < j \leq q}   {t}_{ij}e(I_i|I_j) + s_1 \lambda\left( \frac{F_1}{M_1} \right) \\
& & + \sum_{i=2}^q \tilde{s}_i e(I_1|I_i) \\
&=& \sum_{i=1}^q s_i \lambda\left( \frac{F_i}{M_i} \right) + \sum_{1 \leq i < j \leq q}   t_{ij}e(I_i|I_j),
\end{eqnarray*}
as needed.
The first equality follows from the $q=2$ case (and the fact that $M_2\cdots M_q$ is integrally closed). The second follows from the inductive assumption and that $I(M_2\cdots M_q) = I_2^{\tilde{s}_2}\cdots I_q^{\tilde{s}_q}$ (which itself follows inductively from the assertion in the $q=2$ case as in Proposition \ref{prop:prodlocal}). 
The third equality follows from $s_i = r_1\tilde{s}_i$ and the properties of the mixed multiplicity symbol $e(\cdot|\cdot)$, while the last equality uses $\tilde{s}_i = t_{1i}$.
\end{proof}

A straightforward application of Proposition \ref{prop:prodlength} yields an explicit expression for the joint Buchsbaum-Rim function of several integrally closed modules over $R$.
\begin{theorem}\label{thm:brpolya} Suppose that $M_1 \subseteq F_1$, $\cdots$, $M_q \subseteq F_q$ (for $q \geq 2$) are integrally closed modules of finite colength in free modules $F_k$ of rank $r_k$. The joint Buchsbaum-Rim function of $M_1,\cdots,M_q$ is given by 
\begin{eqnarray*}
&\lambda&\!\!\!\!\!\!\left( \frac{S_{n_1}(F_1)\cdots S_{n_q}(F_q)}{S_{n_1}(M_1)\cdots S_{n_q}(M_q)} \right)\\ &=& \sum_{i=1}^q s_i \left[ br(M_i) \binom{n_i+r_i}{r_i+1} - \left( br(M_i) -\lambda(\frac{F_i}{M_i}) \right) \binom{n_i+r_i-1}{r_i} \right] \\
&+& \sum_{1 \leq i < j \leq q}   v_{ij}e(I_i|I_j)
\end{eqnarray*}
for all $n_1,\cdots,n_q \geq 0$, where $s_i$ and $v_{ij}$ are given by
\begin{eqnarray*}
s_i &=& \binom{n_1+r_1-1}{r_1-1} \cdots \widehat{\binom{n_i+r_i-1}{r_i-1}} \cdots \binom{n_q+r_q-1}{r_q-1},\\
v_{ij} &=& \binom{n_1+r_1-1}{r_1-1} \cdots {\binom{n_i+r_i-1}{r_i}} \cdots {\binom{n_j+r_j-1}{r_j}} \cdots \binom{n_q+r_q-1}{r_q-1}.
\end{eqnarray*}
\end{theorem}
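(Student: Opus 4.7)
The plan is to apply Proposition \ref{prop:prodlength} to the tuple $(S_{n_1}(M_1),\ldots,S_{n_q}(M_q))$, viewed as finite-colength submodules of the free modules $(S_{n_1}(F_1),\ldots,S_{n_q}(F_q))$ whose ranks are $\binom{n_k+r_k-1}{r_k-1}$. This is legitimate: each $S_{n_k}(M_k)$ is integrally closed in $S_{n_k}(F_k)$ because a product of integrally closed modules over a two-dimensional regular local ring is integrally closed (\cite{Kdy1995}), and $S_{n_k}(M_k)$ is exactly such a product inside $S(F_k)$. Proposition \ref{prop:prodlength} then expands the joint Buchsbaum-Rim length as
\begin{align*}
\sum_{i=1}^q s_i\,\lambda\!\left(\frac{S_{n_i}(F_i)}{S_{n_i}(M_i)}\right) + \sum_{1\le i<j\le q}\Bigl(\prod_{k\neq i,j}\binom{n_k+r_k-1}{r_k-1}\Bigr) e(\hat I_i\mid \hat I_j),
\end{align*}
where $\hat I_k:=I(S_{n_k}(M_k))$ and the first coefficient already matches the $s_i$ of the statement. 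The Katz-Kodiyalam formula \cite{KtzKdy1997} identifies each $\lambda(S_{n_i}(F_i)/S_{n_i}(M_i))$ with the bracketed expression in the target formula, which disposes of the first sum.

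It remains to show that
$$e(\hat I_i\mid \hat I_j) \;=\; \binom{n_i+r_i-1}{r_i}\binom{n_j+r_j-1}{r_j}\,e(I_i\mid I_j),$$
so that the second sum collapses to $\sum_{i<j} v_{ij}\,e(I_i\mid I_j)$. For this I will choose a joint reduction $(B_i,B_j)=(\phi_i(F_i),\phi_j(F_j))$ of $(M_i,M_j)$ with each $\phi_k\colon F_k\to F_k$ an injective endomorphism; such a joint reduction exists by Theorem \ref{thm:properties}, since $R$ has positive depth. For every discrete valuation overring $V$ of $R$, Theorem \ref{thm:equiv}(2) gives $B_kV=M_kV$ for some $k\in\{i,j\}$; applying the functor $S_{n_k}(-)$ and using flat base change to $V$, the equality $S_{n_k}(\phi_k)(S_{n_k}(F_k))\otimes V = S_{n_k}(M_k)\otimes V$ follows for the same $k$. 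Thus the valuative criterion shows that $\bigl(S_{n_i}(\phi_i)(S_{n_i}(F_i)),\, S_{n_j}(\phi_j)(S_{n_j}(F_j))\bigr)$ is a joint reduction of $(S_{n_i}(M_i),S_{n_j}(M_j))$, and Theorem \ref{thm:equiv}(3) upgrades this to the statement that $(\det S_{n_i}(\phi_i),\det S_{n_j}(\phi_j))$ is a joint reduction of $(\hat I_i,\hat I_j)$.

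The proof finishes by invoking the symmetric-power identity $\det S_n(\phi) = (\det\phi)^{\binom{n+r-1}{r}}$, provable by diagonalising $\phi$ over a splitting field and summing weights, together with Rees's theorem \cite{Res1984}: since $\det\phi_i,\det\phi_j$ is a regular sequence in the Cohen-Macaulay ring $R$, the standard identity $e((x^a,y^b))=ab\cdot e((x,y))$ gives
$$e(\hat I_i\mid \hat I_j) = e\bigl((\det\phi_i)^{A_i},(\det\phi_j)^{A_j}\bigr) = A_iA_j\,e(\det\phi_i,\det\phi_j) = A_iA_j\,e(I_i\mid I_j),$$
with $A_k=\binom{n_k+r_k-1}{r_k}$. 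Substituting everything back produces the displayed formula. The main obstacle I anticipate is the single conceptually new point, namely verifying the transfer of the joint-reduction property along the symmetric-power functor via flat base change; the rest reduces to bookkeeping with Proposition \ref{prop:prodlength}, the single-module formula of \cite{KtzKdy1997}, and elementary binomial manipulations.
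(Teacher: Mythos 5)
Your proposal is correct. It follows the paper's skeleton — apply Proposition~\ref{prop:prodlength} to the symmetric powers, then cite the single-module Buchsbaum-Rim formula of \cite{KtzKdy1997} for the diagonal terms — but you handle the mixed-term identity $e(I(S_{n_i}(M_i))\mid I(S_{n_j}(M_j)))=A_iA_j\,e(I_i\mid I_j)$ differently. The paper gets it almost for free from the ideal equality $I(S_{n_k}(M_k))=I_k^{A_k}$ (Theorem 1(1) of \cite{BrnVsc2003} for integrally closed modules) together with the bihomogeneity of the mixed multiplicity symbol. You instead transfer a joint reduction $(B_i,B_j)$ of $(M_i,M_j)$ through the symmetric power functor, deduce via the valuative criterion of Theorem~\ref{thm:equiv}(2) that $(S_{n_i}(B_i),S_{n_j}(B_j))$ is a joint reduction of the symmetric powers, pass to determinants using $\det S_n(\phi)=(\det\phi)^{\binom{n+r-1}{r}}$, and invoke Rees's theorem plus the parameter-ideal scaling $e(x^a,y^b)=ab\,e(x,y)$. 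Your route is longer but avoids the Bruns-Vasconcelos ideal equality altogether, replacing it with the manifestly weaker multiplicity statement, and as a by-product it isolates the pleasant fact that symmetric powers of a joint reduction are again a joint reduction. One small remark: the appeal to ``flat base change to $V$'' is not really needed at that step and would require justifying flatness of an arbitrary DVR $V$ between $R$ and $K$; the cleaner argument is simply that if $B_kV=M_kV$ inside $F_kV$ then every degree-$n_k$ product of elements of $M_k$ is a $V$-linear combination of degree-$n_k$ products of elements of $B_k$, which gives $S_{n_k}(B_k)V=S_{n_k}(M_k)V$ directly from the definition of extension of a submodule to $V$.
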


\begin{proof} By Proposition \ref{prop:prodlength} (and since products and torsion-free symmetric powers of integrally closed modules are integrally closed),
\begin{align*}
\lambda\left( \frac{S_{n_1}(F_1)\cdots S_{n_q}(F_q)}{S_{n_1}(M_1)\cdots S_{n_q}(M_q)} \right) = \sum_{i=1}^q s_i \lambda\left( \frac{S_{n_i}(F_i)}{S_{n_i}(M_i)} \right) \\
+  \sum_{1 \leq i < j \leq q}   t_{ij}e(I(S_{n_i}(M_i)|S_{n_j}(M_j)),
\end{align*}
where $s_i$ is the product of the ranks of all the $S_{n_k}(F_k)$ except $S_{n_i}(F_i)$ and $t_{ij}$ is the product of ranks of all the $S_{n_k}(F_k)$ except $S_{n_i}(F_i)$ and $S_{n_j}(F_j)$. 

The rank of $S_{n_k}(F_k)$ is given by $\binom{n_k+r_k-1}{r_k-1}$ while $I(S_{n_k}(M_k)) = I_k^{\binom{n_k+r_k-1}{r_k}}$, where the second assertion follows from Theorem 1(1) of \cite{BrnVsc2003}. Thus,
\begin{eqnarray*}
s_i &=& \binom{n_1+r_1-1}{r_1-1} \cdots \widehat{\binom{n_i+r_i-1}{r_i-1}} \cdots \binom{n_q+r_q-1}{r_q-1},\\
t_{ij} &=& \binom{n_1+r_1-1}{r_1-1} \cdots \widehat{\binom{n_i+r_i-1}{r_i-1}} \cdots \widehat{\binom{n_j+r_j-1}{r_j-1}} \cdots \binom{n_q+r_q-1}{r_q-1}.
\end{eqnarray*}
The term $t_{ij}e(I(S_{n_i}(M_i)|S_{n_j}(M_j))$ simplifies to $v_{ij}e(I_i|I_j)$.

Finally, by Proposition 3.4 of \cite{KtzKdy1997} the length of $\frac{S_{n_i}(F_i)}{S_{n_i}(M_i)}$ is given by
$$
br(M_i) \binom{n_i+r_i}{r_i+1} - \left( br(M_i) -\lambda(\frac{F_i}{M_i}) \right) \binom{n_i+r_i-1}{r_i}.
$$
Putting everything together completes the proof.
\end{proof}

\section{Appendix: The joint Buchsbaum-Rim polynomial}

In this section we give a self contained proof of  the following theorem. This 
theorem
is subsumed by considerably more general statements in various papers (e.g., \cite{KrbRes1994} and  \cite{KlmThr1996}) but is difficult to extract in the form we are using. Thus, a brief exposition seems worthwhile. 

\begin{theorem} \label{thm:brpoly}
Let $(R,{\mathfrak m},k)$ be a Noetherian local ring of positive dimension $d$. For $q \in {\mathbb N}$ and $k=1,2,\cdots,q$, let $M_k \subseteq F_k$ be modules of finite non-zero colength with $F_k$ free of rank $r_k$. Then, the function
$$
f(n_1,\cdots,n_q) =  \lambda\left(
\frac
{S_{n_1}(F_1)\cdots S_{n_q}(F_q)}
{S_{n_1}(M_1)\cdots S_{n_q}(M_q)}
\right)
$$
agrees with  a polynomial function $p(n_1,\cdots,n_q)$ if all of $n_1,\cdots,n_q$ are sufficiently large. The total degree of $p(n_1,\cdots,n_q)$  is $d+r_1+\cdots+r_q-q$.
\end{theorem}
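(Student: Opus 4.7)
The plan is to realise $f$ as arising from a multigraded setup to which the general multigraded Hilbert-polynomial machinery of \cite{KrbRes1994} applies. Set $\mathcal{S} = S(F) = R[T_{k,j}: 1 \le k \le q, 1 \le j \le r_k]$, the $\mathbb{N}^q$-graded polynomial ring with $\deg T_{k,j} = e_k$, and $\mathcal{R} = S(M) \subseteq \mathcal{S}$, the finitely-generated $\mathbb{N}^q$-graded $R$-subalgebra generated by $M_1 \oplus \cdots \oplus M_q$ in multi-degrees $e_1, \ldots, e_q$. Since $\m^{N_k}F_k \subseteq M_k$ for some $N_k$, one has $\m^{N_1 n_1 + \cdots + N_q n_q}\mathcal{S}_n \subseteq \mathcal{R}_n$ by distributing the $\m$-factors among the symmetric factors, so each $\mathcal{S}_n/\mathcal{R}_n$ has finite length over $R$, killed by a power of $\m$ that grows linearly in $|n| = n_1 + \cdots + n_q$.

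For polynomial behaviour, my approach is induction on $r_1 + \cdots + r_q - q = \sum_k (r_k - 1)$. The base case $\sum_k (r_k - 1) = 0$ (all $r_k = 1$) reduces $f$ to the classical multigraded Hilbert function $\lambda(R/M_1^{n_1} \cdots M_q^{n_q})$ for $\m$-primary ideals, known to be eventually polynomial of degree $d$ by Bhattacharya--Rees. For the inductive step, assuming some $r_k > 1$, choose a generic rank-one free direct summand $Rv_k \hookrightarrow F_k$ giving a split $F_k = Rv_k \oplus F_k'$ with $F_k'$ of rank $r_k - 1$, and relate $M_k$ to the submodule $M_k \cap F_k' \subseteq F_k'$ (of rank $r_k - 1$ and finite colength for a generic choice) and the image of $M_k$ in $F_k/F_k' \cong R$ (an $\m$-primary ideal) via short exact sequences of multigraded modules whose Hilbert-function additivity reduces $f$ to a sum of functions falling under the induction hypothesis. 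An alternative route, avoiding induction, is to construct an associated-graded multigraded algebra combining the $\m$-adic filtration with the $\mathbb{N}^q$-grading and apply Kirby--Rees directly, at the cost of a delicate Artin--Rees argument since $\mathcal{S}$ is typically not finitely generated as an $\mathcal{R}$-module.

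The degree identification uses Proposition~\ref{prop:ansp}: the analytic spread
\[
a\bigl(S(M_1 \oplus \cdots \oplus M_q)\bigr) = r_1 + \cdots + r_q + d - q
\]
matches the asserted total degree, via the standard identification of the total degree of a joint Hilbert polynomial with the analytic spread of the underlying multigraded algebra (the Buchsbaum--Rim case $q=1$ serves as the prototype, and the passage to the diagonal reduces the general case to the single-variable Hilbert--Samuel theorem). The principal obstacle will be executing the inductive cutting step cleanly: ensuring that a generic direct-summand splitting of $F_k$ is compatible with the finite-colength condition on $M_k$, that the short exact sequences on the multigraded symmetric-product modules are set up correctly (accounting for the subtle fact that $S_n(M)$ is an image rather than a tensor product), and that the conjectured degree genuinely decreases at each induction step.
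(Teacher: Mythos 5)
Your proposal takes a fundamentally different route from the paper, and it has a genuine gap at the heart of the inductive step. The paper proves polynomial behaviour by induction on $q$ (not on $\sum_k(r_k-1)$), and the key device it uses is a Rees-algebra trick with an extra degree-zero indeterminate: for the base case $q=1$ one forms ${\mathcal R}=R[F,Mt]\subseteq S(F)[t]$, and the finitely generated graded ${\mathcal R}$-module $\frac{F{\mathcal R}}{M{\mathcal R}}$ has degree-$n$ component $\bigoplus_{i=1}^{n}\frac{F^{n-i+1}M^{i-1}}{F^{n-i}M^{i}}$, whose length telescopes to $\lambda(F^n/M^n)$. A multigraded version of the same device (with ${\mathcal R}=R[M_1,\ldots,M_{q-1},F_q,M_q t]$) handles the inductive step. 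This sidesteps the Artin--Rees issue you identify, and sidesteps any need for a decomposition of $M_k$.

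The gap in your approach: a generic rank-one free summand $Rv_k\hookrightarrow F_k$ gives a short exact sequence of $R$-modules $0\to M_k\cap F_k'\to M_k\to I_k\to 0$, but this does \emph{not} pass to a short exact sequence (or even a clean filtration you can control) on $S_{n}(M_k)\subseteq S_n(F_k)$. The free module splits, $S_n(F_k)=\bigoplus_{i=0}^{n}v_k^{\,i}\,S_{n-i}(F_k')$, but $S_n(M_k)$ does not respect this decomposition: if $m=av_k+m'$ with $a\in I_k$, $m'\in F_k'$ is a generator of $M_k$, products $m_1\cdots m_n$ produce cross terms mixing the $a$'s and $m'$'s in a way that is not captured by $S_{n-i}(M_k\cap F_k')\cdot I_k^{\,i}$. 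You flag exactly this subtlety at the end but do not resolve it, and without a compatible splitting $M_k=(\text{something in } F_k')\oplus(\text{something in } Rv_k)$ the intended additivity of Hilbert functions fails. Notably, the paper \emph{does} use such a split, but only in the degree argument and only when $M_q\not\subseteq\m F_q$, which is precisely when $M_q$ has a minimal generator that is part of a basis of $F_q$ and therefore the split $F_q=G_q\oplus C$, $M_q=N_q\oplus C$ holds simultaneously for both the free module and the submodule; a ``generic'' $v_k$ chosen without that hypothesis offers no such compatibility.

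The degree identification also needs more than an appeal to Proposition~\ref{prop:ansp}. That proposition computes the analytic spread $a(S(M))=r_1+\cdots+r_q+d-q$, which governs the growth of $\mu\bigl(S_n(M_1)\cdots S_n(M_q)\bigr)$, not directly the growth of the colength $\lambda\bigl(F^n/M^n\bigr)$. The claimed ``standard identification'' of the two degrees is itself a theorem requiring proof even in the $q=1$ Buchsbaum--Rim case. The paper instead establishes the degree directly: when all $M_k\subseteq\m F_k$ it sandwiches the colength between $\lambda(R/\m^{|n|})\prod_k\binom{n_k+r_k-1}{r_k-1}$ and $\lambda(R/J^{|n|})\prod_k\binom{n_k+r_k-1}{r_k-1}$ for $J$ an $\m$-primary annihilator, and when some $M_k\not\subseteq\m F_k$ it runs the rank induction using the compatible splitting just described. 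That sandwiching argument is both more elementary and self-contained than what you sketch, and it is worth adopting rather than leaning on an unproved bridge between analytic spread and the colength polynomial degree.
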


\begin{proof}
The proof is by induction on $q$. Contemporary proofs of the base case $q = 1$ can be found in \cite{Rbr1998} or \cite{SmsLrcVsc2001}, but we present a proof of this case, for the sake of completeness, and to show how the general case follows naturally from the base case. 

For the base case, we start with $M\subseteq F$, so that $\frac{F}{M}$ has finite length and $F$ has rank $r$. Set ${\mathcal S}_0 = S(F)$. Let $t$ be an indeterminate over ${\mathcal S}_0$ having degree zero, and set ${\mathcal S} = {\mathcal S}_0[t]$ and ${\mathcal R} = {\mathcal S}_0[Mt] = R[F, Mt]$ so that ${\mathcal R}$ is a Noetherian, standard $\N$-graded $R$-subalgebra of ${\mathcal S}$. Note, that as an $R$-module, ${\mathcal R}_n = (F, Mt)^n = F^n \oplus F^{n-1}Mt \oplus \cdots \oplus M^nt^n$. Here, and in the rest of the proof, we will use notation such as $F^n$ and $M^n$ to denote $S_n(F)$ and $S_n(M)$.

If we consider the graded ${\mathcal R}$-module $\frac{F{\mathcal R}}{M{\mathcal R}}$, which is a quotient of ideals of ${\mathcal R}$, we see that, as an $R$-module,
\[\left(\frac{F{\mathcal R}}{M{\mathcal R}}\right)_n = \frac{F^n}{F^{n-1}M} \oplus \frac{F^{n-1}M}{F^{n-2}M^2} \oplus \cdots \oplus \frac{FM^{n-1}}{M^n},
\] so that the length of $\left(\frac{F{\mathcal R}}{M{\mathcal R}}\right)_n$ equals the length of $\frac{F^n}{M^n}$. Since the graded components of the finite ${\mathcal R}$-module $\frac{F{\mathcal R}}{M{\mathcal R}}$ have finite length, the length of $\frac{F^n}{M^n}$ agrees with  a polynomial in $n$, for all $n$ sufficiently large. 

We now proceed with the induction argument, assuming the result for $q-1$. Towards that end, we will further simplify notation as follows: We write $\n \in {\mathbb N}^{q-1}$ for the $(q-1)$-tuple $(n_1, \ldots, n_{q-1})$ and interpret ``$n$ sufficiently large"  to mean that  each  coordinate $n_k$ is sufficiently large. We set $F^{\n} = F_1^{n_1}\cdots F_{q-1}^{n_{q-1}}$ and $M^{\n} = 
M_1^{n_1}\cdots M_{q-1}^{n_{q-1}}$. We will show that the lengths of $\frac{F^nF_q^{n_q}}{M^nF_q^{n_q}}$ and $\frac{M^nF_q^{n_q}}{M^nM_q^{n_q}}$ are given by polynomials in $n_1, \ldots, n_q$ for $n$ and $n_q$ sufficiently large. Adding the resulting polynomials will show that the length of $\frac{F_1^{n_1}\cdots F_{q}^{n_{q}}}{M_1^{n_1}\cdots M_{q}^{n_{q}}}$ is given by a polynomial, say $p(n_1, \ldots, n_q)$, when all $n_i$ are sufficiently large.

To see that the length of $\frac{F^nF_q^{n_q}}{M^nF_q^{n_q}}$  is given by a polynomial, consider the ${\mathbb N}^q$-graded $R$-algebra ${\mathcal S}_0 = S(F)$ where $F = F_1 \oplus \cdots \oplus F_q$. It is easy to see that $F^nF_q^{n_q}$ is a direct sum of $rk(F_q^{n_q}) = \binom{n_q+r_q-1}{r_q-1}$ copies of $F^n$, while $M^nF_q^{n_q}$
is a direct sum of $\binom{n_q+r_q-1}{r_q-1}$ copies of $M^n$ sitting in $F^nF_q^{n_q}$ in the obvious manner, so that the quotient $\frac{F^nF_q^{n_q}}{M^nF_q^{n_q}}$ is isomorphic to a direct sum of $\binom{n_q+r_q-1}{r_q-1}$ copies of $\frac{F^n}{M^n}$. By induction on $q$, the length of $\frac{F^n}{M^n}$ is given by a polynomial $P(n)$ for $n$ sufficiently large. It follows that the length of $\frac{F^nF_q^{n_q}}{M^nF_q^{n_q}}$ is given by the polynomial $P(n)\binom{n_q+r_q-1}{r_q-1}$ in $n_1,\cdots, n_q$ when these are all sufficiently large.

To see that the length of $M^nF_q^{n_q}/M^nM_q^{n_q}$ is given by a polynomial, we work inside of the ring ${\mathcal S} = {\mathcal S}_0[t]$, where $t$ is an indeterminate having degree zero. Set 
${\mathcal R} = R[M_1, \ldots, M_{q-1}, F_q, M_qt]$, so that ${\mathcal R}$ is a standard $\N^q$-graded subalgebra of ${\mathcal S}$. It follows that ${\mathcal R}_{(n,n_q)}$ is generated by the degree $(n_1, \ldots, n_q)$ products coming from the set
$M^n\cdot (F_q^{n_q}, F_q^{n_q-1}M_qt, F_q^{n_q-2}M_q^2t^2, \ldots, M_q^{n_q}t^{n_q})$. We now consider the $\N ^q$-graded ${\mathcal R}$-module $\frac{F_q{\mathcal R}}{M_q{\mathcal R}}$. 

Note that $(F_q{\mathcal R})_{(n,n_q)} = F_q{\mathcal R}_{(n,n_q-1)}$ and $(M_q{\mathcal R})_{(n,n_q)} = 
M_q{\mathcal R}_{(n,n_q-1)}$, so from our description of ${\mathcal R}_{(n,n_q)}$, we have that, as an $R$-module
\[
(F_q{\mathcal R})_{(n,n_q)} = M^nF_q^{n_q} \oplus M^nF_q^{n_q-1}M_qt \oplus \cdots \oplus M^nF_qM_q^{n_q-1}t^{n_q-1},
\] and similarly, we have
\[
(M_q{\mathcal R})_{(n,n_q)} = M^nF_q^{n_q-1}M_q \oplus M^nF_q^{n_q-2}M_q^2t \oplus \cdots \oplus M^nM_q^{n_q}t^{n_q-1} .
\] It follows that, as an $R$-module, 
\[
\left(\frac{F_q{\mathcal R}}{M_q{\mathcal R}}\right)_{(n,n_q)} = \frac{M^nF_q^{n_q}}{M^nF_q^{n_q-1}M_s} \oplus \frac{M^nF_q^{n_q-1}M_q}{M^nF_q^{n_q-2}M^2_q} \oplus\cdots \oplus \frac{M^nF_qM_q^{n_q-1}}{M^nM_q^{n_q}}.
\]
Therefore, the length of $\left(\frac{F_q{\mathcal R}}{M_q{\mathcal R}}\right)_{(n,n_q)}$ equals the length of $\frac{M^nF_q^{n_q}}{M^nM_q^{n_q}}$. Since $\frac{F_q{\mathcal R}}{M_q{\mathcal R}}$ is a finitely-generated $\N ^q$-graded module over ${\mathcal R}$, it follows that the length of 
$\frac{M^nF_q^{n_q}}{M^nM_q^{n_q}}$  is given by a polynomial in $n, n_q$ for all $n_i$ sufficiently large.

What about the degree of the polynomial $p(n_1,\cdots,n_q)$? If we assume that each $M_k\subseteq \m F_k$ then $M_1^{n_1}\cdots M_{q}^{n_{q}} \subseteq \m^{n_1+\cdots +n_q}F_1^{n_1}\cdots F_{q}^{n_{q}}$ so that the length of $\frac{F_1^{n_1}\cdots F_{q}^{n_{q}}}{M_1^{n_1}\cdots M_{q}^{n_{q}}}$ is bounded below by 
$\lambda (R/\m^{n_1+\cdots + n_q})\cdot \binom{n_1+r_1-1}{r_1-1}\cdots \binom{n_q+r_q-1}{r_q-1}$, which is ultimately a polynomial of total degree $d+r_1+\cdots + r_q - q$. On the other hand, if $J$ is an $\m$-primary ideal annihilating each $F_k/M_k$, then $J^{n_1+\cdots + n_q}F_1^{n_1}\cdots F_{q}^{n_{q}} \subseteq M_1^{n_1}\cdots M_{q}^{n_{q}}$ so that the length of $\frac{F_1^{n_1}\cdots F_{q}^{n_{q}}}{M_1^{n_1}\cdots M_{q}^{n_{q}}}$ is bounded above by 
$\lambda (R/J^{n_1+\cdots + n_q})\cdot \binom{n_1+r_1-1}{r_1-1}\cdots \binom{n_q+r_q-1}{r_q-1}$, which is ultimately a polynomial also of total degree $d+r_1+\cdots + r_q - q$. Therefore, the degree of $p(n_1,\cdots,n_q)$ is  $d+r_1+\cdots +r_q-q$.

What if some $M_k\not\subseteq \m F_k$? We  will show that the degree of the multivariable Buchsbaum-Rim polynomial is still equal to $d+r_1+\cdots + r_q-q$, as long as each $\frac{F_k}{M_k}$ is not zero. We do this by induction on $r_1+\cdots+ r_q$. If this sum equals one, then we are just in the case of $R$ modulo an $\m$-primary ideal $I$, and in this case the Hilbert-Samuel polynomial of $I$ has degree $d$, which is what we want. If every $M_k\subseteq \m F_k$, then the proof above applies. Without loss of generality, suppose $M_q\not \subseteq \m F_q$. Then, we can write 
$F_q = G_q\oplus C$ and $M = N_q\oplus C$, for $N_q\subseteq G_q$ and $C \subseteq F_q$ a rank one free $R$-module. Then, working inside of $S_0$, for $n_q\geq 1$ we have
\[
F_q^{n_q} = G_q^{n_q}\oplus G_q^{n_q-1}C\oplus \cdots \oplus G_qC^{n_q-1}\oplus C^{n_q},
\] and
\[
M_q^{n_q} = N_q^{n_q}\oplus N_q^{n_q-1}C\oplus \cdots \oplus N_qC^{n_q-1}\oplus C^{n_q}.
\] Maintaining the notation from above, it follows that 

\[
\frac{F^nF_q^{n_q}}{M^nM_q^{n_q}} \cong \frac{M^nG_q^{n_q}}{M^nN_q^{n_q}}\oplus \frac{M^nG_q^{n_q-1}}{M^nN_{q}^{n_q-1}}\oplus \cdots \oplus \frac{G_q}{N_q}.
\]
If we let $H_F(n_1, \ldots, n_q)$ denote the function giving the lengths of  $\frac{F_1^{n_1}F_2^{n_2}\cdots F_q^{n_q}}{M_1^{n_1}M_2^{n_2}\cdots 
M_q^{n_q}}$ and $H_G(n_1, \ldots, n_q)$ for the function giving the lengths of $\frac{F_1^{n_1}F_2^{n_2}\cdots G_q^{n_q}}{M_1^{n_1}M_2^{n_2}\cdots 
N_q^{n_q}}$, then 
\[
H_F(n_1, \ldots, n_s) = \sum _{i = 1}^{n_s} H_G(n_1, \ldots, n_{s-1}, i).
\] It follows that the total degree of the polynomial corresponding to $H_F(n_1, \ldots, n_s)$ is one more than the total degree of the polynomial corresponding to $H_G(n_1, \ldots, n_s)$. Invoking the induction hypothesis completes the proof. 
\end{proof}


\begin{thebibliography}{10}

\bibitem[BrnHrz1993]{BrnHrz1993}
Winfried Bruns and Jürgen Herzog, 
\emph{Cohen-Macaulay rings}, 
Cambridge Studies in Advanced Mathematics, Vol. 39, Cambridge University Press, Cambridge, 1993.

\bibitem[BrnVsc2003]{BrnVsc2003}
W. Bruns and W. V. Vasconcelos, 
\emph{Minors of symmetric and exterior powers}, 
\textit{J. Pure Appl. Algebra} \textbf{179} (2003), no. 3, 235--240.

\bibitem[CllPrz2010]{CllPrz2010}
R. Callejas-Bedregal and V. H. Jorge P\'erez,
\emph{Mixed multiplicities and the minimal number of generators of modules},
\textit{J. Pure Appl. Algebra} \textbf{204} (2010), 1642--1643.

\bibitem[Cdr2024]{Cdr2024}
Yairon Cid-Ruiz, 
\emph{Relative mixed multiplicities and mixed Buchsbaum-Rim multiplicities}, 
arXiv:2311.15105, 2024. 

\bibitem[Flt1998]{Flt1998}
William Fulton, 
\emph{Intersection Theory}, 
2nd ed., Ergebnisse der Mathematik und ihrer Grenzgebiete. 3. Folge. A Series of Modern Surveys in Mathematics, Vol. 2, Springer-Verlag, Berlin, 1998, xiv+470 pages.

\bibitem[Grt1967]{Grt1967}
A.~Grothendieck, 
\emph{\'El\'ements de g\'eom\'etrie alg\'ebrique. IV. \'Etude locale des sch\'emas et des morphismes de sch\'emas IV}, 
\textit{Inst. Hautes \'Etudes Sci. Publ. Math.} \textbf{32} (1967).

\bibitem[JhnVrm1992]{JhnVrm1992}
B. Johnston and J.K. Verma,
\emph{On the length formula of Hoskin and Deligne and associated graded rings of two-dimensional regular local rings},
\textit{Math. Proc. Camb. Phil. Soc.} \textbf{111} (1992), 423--432.

\bibitem[Ktz1995]{Ktz1995}
D. Katz,
\emph{Reduction criteria for modules},
\textit{Comm. Alg.} \textbf{17} (1995), no. 7, 4543--4548.

\bibitem[KtzKdy1997]{KtzKdy1997}
Daniel Katz and Vijay Kodiyalam, 
\emph{Symmetric powers of complete modules over a two-dimensional regular local ring}, 
\textit{Trans. Amer. Math. Soc.}, \textbf{349} (1997), no. 2, 747--762.

\bibitem[KtzRce2008]{KtzRce2008}
Daniel Katz and Glenn Rice,
\emph{Asymptotic prime divisors of torsion free symmetric powers of modules},
\textit{J. Algebra}, \textbf{319} (2008), 2209--2234.

\bibitem[KrbRes1994]{KrbRes1994}
D.~Kirby and D.~Rees, 
\emph{Multiplicities in graded rings. I. The general theory}, 
in \emph{Commutative algebra: syzygies, multiplicities, and birational algebra (South Hadley, MA, 1992)}, 
vol. 159 of \emph{Contemp. Math.}, 
pp. 209--267, Amer. Math. Soc., Providence, RI, 1994.

\bibitem[KlmLsk1972]{KlmLks1972}
S. Kleiman and D. Laksov, 
\emph{Schubert Calculus}, 
\textit{Amer. Math. Monthly}, \textbf{79} (1995), no. 10, 1061--1082.

\bibitem[Kdy1995]{Kdy1995}
Vijay Kodiyalam, 
\emph{Integrally closed modules over two-dimensional regular local rings}, 
\textit{Trans. Amer. Math. Soc.}, \textbf{347} (1995), no. 9, 3551--3573.

\bibitem[KdyMhn2015]{KdyMhn2015}
Vijay Kodiyalam and Radha Mohan, 
\emph{Lengths and multiplicities of integrally closed modules over a two-dimensional regular local ring}, 
\textit{J. Algebra}, \textbf{425} (2015), 392--409.

\bibitem[KlmThr1996]{KlmThr1996}
S.~Kleiman and A.~Thorup, 
\emph{Mixed Buchsbaum-Rim multiplicities}, 
\textit{Amer. J. Math.} \textbf{118} (1996), no. 3, 529--569.

\bibitem[Mts1989]{Mts1989}
Hideyuki Matsumura, 
\emph{Commutative ring theory}, 
vol. 8 of \emph{Cambridge Studies in Advanced Mathematics}, 
Cambridge University Press, Cambridge, second edition, 1989. 
Translated from the Japanese by M. Reid.

\bibitem[PskSzp1973]{PskSzp1973}
C. Peskine and L. Szpiro,
\emph{Dimension projective finie et cohomologie locale},
\textit{Publications Mathématiques de l'IHÉS} Volume \textbf{42} (1973), 47--119.

\bibitem[Res1981]{Res1981}
D.~Rees, 
\emph{Hilbert functions and pseudo-rational local rings of dimension two},
\textit{J. London Math. Soc. (2)}, \textbf{24} (1981), 467--479.

\bibitem[Res1984]{Res1984}
D.~Rees, 
\emph{Generalizations of reductions and mixed multiplicities}, 
\textit{J. London Math. Soc. (2)}, \textbf{29} (1984), no. 3, 397--414.

\bibitem[Res1987]{Res1987}
D. Rees, 
\emph{Reduction of modules}, 
\textit{Math. Proc. Cambridge Philos. Soc.} \textbf{101} (1987), no. 3, 431--449.

\bibitem[Rbr1998]{Rbr1998}
Paul C. Roberts, 
\emph{Multiplicities and Chern classes in local algebra}, 
\emph{Cambridge Tracts in Mathematics}, Vol. 133, Cambridge University Press, Cambridge, 1998.

\bibitem[SmsLrcVsc2001]{SmsLrcVsc2001}
A.~Simis, B.~Ulrich, and W.~V.~Vasconcelos, 
\emph{Codimension, multiplicity and integral extensions}, 
\textit{Math. Proc. Cambridge Philos. Soc.} \textbf{130} (2001), no. 2, 237--257.

\bibitem[SwnHnk2006]{SwnHnk2006}
Irena Swanson and Craig Huneke,
\emph{Integral closure of ideals, rings, and modules}, 
\emph{London Mathematical Society Lecture Note Series}, Vol. 336, Cambridge University Press, Cambridge, 2006.

\bibitem[Vrm1990]{Vrm1990}
J.~K. Verma, 
\emph{Joint reductions of complete ideals}, 
\textit{Nagoya Math. J.}, \textbf{118} (1990), 155--163.


\end{thebibliography}
\end{document}